\numberwithin{equation}{section}
    \theoremstyle{plain}
\newtheorem{theorem}{Theorem}[section]
\newtheorem{lemma}[theorem]{Lemma}
\newtheorem{proposition}[theorem]{Proposition}
\newtheorem{corollary}[theorem]{Corollary}
\newtheorem{definition}[theorem]{Definition}
\newtheorem{example}[theorem]{Example}
\theoremstyle{remark}
\theoremstyle{definition}
\newtheorem{remark}[theorem]{Remark}
\newcommand\<[1]{\left\langle\, #1\,\right\rangle}
\newcommand\norm[1]{\lVert #1 \rVert}
\newcommand{\cf}{\ensuremath \mbox{ ${\mathbf{1}}$}}
\newcommand{\Cf}{\ensuremath \mbox{\large${\mathbf{1}}$}}
\newcommand{\h}{\ensuremath \mathbb{H}}
\newcommand{\bd}{{\ast\ast}}
\newcommand{\G}{\ensuremath \mbox{\tiny $G$}}
\newcommand{\dmg}{\ensuremath  \, \mathrm{dm}_{\G}}
\newcommand{\mg}{\ensuremath  \, \mathrm{m}_{\G}}
\newcommand{\mA}{\mathscr{A}}
\newcommand\restr[2]{{
  \left.\kern-\nulldelimiterspace 
  #1 
  \vphantom{|} 
  \right|_{_{#2}} 
  }}
\newcommand{\C}{\mathbb{C}}
\newcommand{\T}{\mathbb{T}}
\newcommand{\N}{\mathbb{N}}
\DeclareMathOperator{\supp}{supp}
\renewcommand{\ll}{\mathcal{L}}
\renewcommand{\emptyset}{\varnothing}
\DeclareMathOperator{\VN}{VN}
\title[Ergodicity of positive definite functions]{
Positive definite functions as uniformly ergodic  multipliers of the Fourier algebra}
\author{Jorge Galindo,  Enrique Jord\'a \and Alberto Rodr\'iguez-Arenas}
\address{\noindent Jorge Galindo, Instituto Universitario de Matem\'aticas y
Aplicaciones (IMAC)\\ Universidad Jaume I, E-12071, Cas\-tell\'on,
Spain. \hfill\break \noindent E-mail: {\tt jgalindo@uji.es}}
\address{\noindent Enrique Jord\'a, EPS Alcoy, Instituto Universitario de Matem\'atica Pura y Aplicada IUMPA,
Universitat Polit\`ecnica de Val\`encia, Plaza Ferr\'andiz y Carbonell s/n
E-03801 Alcoy, Spain, \hfill\break \noindent E-mail: {\tt ejorda@mat.upv.es}}
\keywords{ergodic measure, uniformly ergodic measure, random walk, mean ergodic operator, uniformly mean ergodic operator, convolution operator, locally compact  group, measure algebra}
\address{\noindent Alberto Rodr\'{\i}guez-Arenas \\Departamento de Álgebra, Análisis Matemático, Geometría y Topología, Facultad de Ciencias,Universidad de Valladolid, Paseo de Belén 7, E-47011 Valladolid, Spain
\hfill\break \noindent E-mail: {\tt arodare@uva.es}}
\date{\today.}
\begin{document}\maketitle

\begin{abstract}%

      Let G be a locally compact group and let $\phi$ be a positive definite function on G with $\phi(e)=1$. This function defines a multiplication operator $M_\phi$ on the Fourier algebra $A(G)$ of $G$.  The aim of this paper is to classify the ergodic properties of the operators $M_\phi$, focusing on several key factors, including the subgroup $H_\phi=\{x\in G\colon \phi(x)=1\}$, the spectrum of $M_\phi$, or how ``spread-out'' a power of $M_\phi$ can be. We show that the multiplication operator $M_\phi$ is uniformly mean ergodic if and only if $H_\phi$ is open and 1 is not an accumulation point of the spectrum of $M_\phi$.    Equivalently, this happens when some power of $\phi$ is not far, in the multiplier norm, from a function supported on finitely many cosets of $H_\phi$. Additionally, we show that the powers of  $M_\phi$ converge in norm if, and only if, the operator is uniformly mean ergodic and  $H_\phi =\{x\in G\colon |\phi(x)|=1\}$.
      \end{abstract}

\section{Introduction}
The ergodicity of the  random walk  governed by a probability measure $\mu$ on a group $G$ can be described through the behaviour of the  $L^1(G)$-convolution operator, $\lambda_1(\mu)f=\mu \ast f$,  restricted  to the augmentation ideal $L_1^0(G)=\{f\in L^1(G)\colon \int f(x)\dmg(x)=0\} $. When $G$ is commutative, the properties of $\lambda_1(\mu)$ can be  recast, via the Fourier-Stieltjes transform,  in terms of the multiplication operators $M_{\widehat{\mu}} \colon A(\widehat{G})\to A(\widehat{G})$, where $A(\widehat{G})$ denotes the Fourier algebra of $\widehat{G}$, the algebra of  functions on $\widehat{G}$ that can be obtained as the Fourier transform of some function in $L_1(G)$.

While the Fourier-Stieltjes transform can hardly be given sense beyond commutative or compact groups, the Fourier algebra can still be meaningfully defined as an algebra of functions on any locally compact group. Harmonic Analysis  on commutative groups, which is so often developed with the aid of   transforms, admits  then two noncommutative generalizations, one through    convolution operators on the noncommutative algebra $L_1(G)$ and another one leaning  on multiplication operators on the Fourier algebra $A(G)$.

In our previous works \cite{galijorda21,gajoro24} we dealt with the analysis  of the convolution operator $\lambda_1(\mu) $ for general locally compact groups. We now address the multiplication operator approach. That means that we look at the bounded  linear  operator $M_\phi\colon A(G)\to A(G)$ induced  by   a positive definite function $\phi$ on $G$,     given by  $ M_\phi u=\phi\cdot u$. This operator is said to be \textit{mean ergodic} when  $\frac1n (M_\phi + M_\phi^2+\cdots + M_\phi ^n )$ is convergent in the strong operator topology. If the strong operator topology is replaced by   the  operator norm topology, we say that $M_\phi$ is  \emph{uniformly mean ergodic}.

In this work, we particularly focus on  the   uniform problem.
 While our results parallel  the ones obtained  in \cite{gajoro24},  different approaches are often needed, especially due to the lack of  a viable transform for discrete groups (transforms were important in our treatment of compact groups in \cite{gajoro24})   and the absence of group structure in the quotient $G/H_\phi$, where $H_\phi$ denotes the closed subgroup $\phi^{-1}(\{1\})$. It is known that $M_\phi$ is mean ergodic precisely when $H_\phi$ is open. We show in this paper that uniform mean  ergodicity of $M_\phi$ is characterized through the spectral properties of    $M_\phi$, through the proximity of $\phi$  to a function supported on finitely many translates of $H_\phi$, and through its relation with operator quasi-compactness. Our approach is also used to characterize under which conditions the iterates $M_\phi^n$ of $M_\phi$ converge.

We next dualize the ergodicity properties of the random walk induced by a probability measure and model them  through $\phi$. This consists in analyzing the convergence to 0 of the means   $\frac1n (M_\phi u+ \cdots + M_\phi ^n u)$, for every $u\in A(G)$ with $u(e)=0$.
We prove that  $\phi$ is ergodic if and only if $\phi$ is  \textit{adapted}, i.e. if  $H_\phi =\{e\}$, precisely as in the commutative case and unlike the scenario  for general locally compact groups in   the convolution case.
This  equivalence can be deduced from  the work of Kaniuth, Lau and \"Ulger \cite{kanilauulger10}   and of Guex \cite{guex}, but  the scope of these proofs is somewhat blurred by nonessential hypotheses or  misguided connections. We provide  here a  short direct proof. Replacing in this context the strong operator topology by the uniform norm, we see that  $\phi $ is  uniformly ergodic if and only if $G$ is discrete and 1 is isolated in the spectrum of $\phi$. When it comes to powers of $\phi$ we prove that the operators $M_{\phi}^n$ converge to 0 uniformly on $A_0(G)$ if and only if $\phi$  is uniformly ergodic and $|\phi(x)|=1$ implies $x=1$.

\section{Preliminaries}
We need to establish the notation and basic facts concerning both ergodicity and  Fourier algebras.
\subsection{Preliminaries on mean ergodicity}
    For a Banach space $X$, we denote by $\ll (X)$ the space of linear and continuous operators from $X$ to itself and for a Hilbert space $\mathbb H$, the space of its unitary operators is denoted $\mathcal{U}(\mathbb H)$. The Ces\`aro means of $T\in \ll (X)$ are
        \[
            T_{[n]}=\frac1n \sum_{j=1} ^n T^j,
        \]
    where $T^j=T\circ \cdots \circ T$ denotes the $j$-th iterate of $T$. We say that  $T$ is \textit{ mean ergodic} when $(T_{[n]})_n$ converges in the strong operator topology to an operator $P\in \ll(X)$ (and then $P$ has to be the projection on the subspace of $X$ consisting on the vectors fixed by $T$), that is, when   $(T_{[n]}x)_n$ converges to $Px$   for each $x\in X$.  When $(T_{[n]})_n $ converges to $P$ in the operator norm, we say that $T$ is \textit{uniformly mean ergodic}. When $T$ is mean ergodic, the space decomposes as $X=\overline{(I-T)}(X) \oplus \ker (I-T)$. For further information on this topic, see the second chapter of \cite{kren85}.

     If $X$ is a Banach space and $T\in\mathcal{L}(X)$ we denote by $\sigma(T)$ its spectrum, i.e. $\sigma(T):=\{z\in\C:\ zI-T \text{ is not invertible}\}$. The resolvent mapping $R(\cdot,T):\C\setminus \sigma(T)\to \ll(X)$, $z\mapsto R(z,T):=(zI-T)^{-1}$ is holomorphic when $\ll(X)$ is endowed with its norm topology (cf \cite[Chapter VII]{conway}).

    We state here  a useful characterization of uniform mean ergodicity, due to the combined results of Dunford and Lin.

    \begin{theorem}
    \label{T:DL}
        Let $T \in \ll (X)$, where $X$ is a Banach space. If $(\|T^n\|)_n$ is bounded, then the following assertions are equivalent:
        \begin{enumerate}
            \item $T$ is uniformly mean ergodic,
            \item $(I-T)(X)$ is closed,
            \item either $1\not \in \sigma (T)$, or $1$ is a pole of order $1$ of the resolvent.
        \end{enumerate}

    \end{theorem}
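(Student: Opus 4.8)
The plan is to prove the cycle $(1)\Rightarrow(2)\Rightarrow(3)\Rightarrow(1)$. Write $M:=\sup_n\|T^n\|<\infty$; then the Ces\`aro means are uniformly bounded, and since $(I-T)T_{[n]}=\tfrac1n(T-T^{n+1})$ we have $\|(I-T)T_{[n]}\|\le 2M/n\to0$. The algebraic engine is the telescoping identity
\[
    I-T_{[n]}=(I-T)B_n=B_n(I-T),\qquad B_n:=\frac1n\sum_{k=0}^{n-1}(n-k)\,T^k ,
\]
which comes from $nI-\sum_{j=1}^n T^j=\sum_{j=1}^n(I-T)(I+\cdots+T^{j-1})$. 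I would use it in one way only: if $\|T_{[n]}\|<1$ for \emph{some} single index $n$, then $I-T_{[n]}$ is invertible, and because $B_n$ is a polynomial in $T$ it commutes with $I-T$, forcing $I-T$ itself to be invertible.

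For $(1)\Rightarrow(2)$ I would let $P:=\lim_n T_{[n]}$ (norm limit); then $P$ is the projection onto $\Fix{T}$, it commutes with $T$, and $X=\Fix{T}\oplus\ker P$. On the $T$-invariant subspace $Y:=\ker P$ one has $\|(T|_Y)_{[n]}\|=\|(T_{[n]}-P)|_Y\|\le\|T_{[n]}-P\|\to0$, so the engine makes $I-T$ invertible on $Y$; hence $(I-T)X=(I-T)(\Fix{T})+(I-T)(Y)=\{0\}+Y=Y$ is closed. For $(2)\Rightarrow(3)$ I would put $Y:=(I-T)X$, now closed (hence a Banach space) and $T$-invariant. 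The open mapping theorem applied to the surjection $I-T\colon X\to Y$ provides $C>0$ such that every $z\in Y$ can be written $z=(I-T)x$ for some $x$ with $\|x\|\le C\|z\|$; then $\|(T|_Y)_{[n]}z\|=\tfrac1n\|(T-T^{n+1})x\|\le 2MC\|z\|/n$, so $\|(T|_Y)_{[n]}\|\to0$ and, by the engine again, $I-T$ is invertible on $Y$. From this I would deduce $X=\Fix{T}\oplus Y$, with bounded projection $P=I-(I-T|_Y)^{-1}(I-T)$; since $T$ is the identity on $\Fix{T}$ and $1\notin\sigma(T|_Y)$, with respect to this decomposition the resolvent near $z=1$ is $\tfrac{1}{z-1}P$ plus a part holomorphic at $1$ (the resolvent of $T|_Y$). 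Thus either $\Fix{T}=\{0\}$ and $1\notin\sigma(T)$, or $1$ is a pole of the resolvent of order exactly $1$, with residue $P$.

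For $(3)\Rightarrow(1)$: if $1\notin\sigma(T)$ then $T_{[n]}=(I-T)^{-1}(I-T)T_{[n]}\to0$ in norm. If $1$ is a pole of order $1$, it is isolated in $\sigma(T)$, so there is a Riesz projection $P$ at $1$; the order of the pole being $1$ means precisely $(T-I)P=0$, so $T_{[n]}P=P$ for all $n$, while on the $T$-invariant complement $(I-P)X$ the operator $T$ is power bounded with $1\notin\sigma(T|_{(I-P)X})=\sigma(T)\setminus\{1\}$, so the Ces\`aro means there tend to $0$ in norm by the previous case. Adding $T_{[n]}=T_{[n]}P+T_{[n]}(I-P)$ gives $T_{[n]}\to P$ in norm.

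The step that needs real care is $(2)\Rightarrow(3)$: closedness of $(I-T)X$ only gives the Ces\`aro means tending to $0$ \emph{strongly} on that subspace, and it is the open mapping theorem that upgrades this to the norm convergence needed so that the identity $I-T_{[n]}=(I-T)B_n$ can be applied; one must also invoke ``pole of order $1$'' in the exact form $(T-I)P=0$ for the Riesz projection. Everything else is routine bookkeeping with the bounded, $T$-invariant direct-sum decompositions.
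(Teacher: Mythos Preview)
The paper does not actually prove this theorem: immediately after the statement it simply records that the equivalence of (1) and (3) is due to Dunford \cite{dunf43} and the equivalence with (2) to Lin \cite{lin74}. Your proposal is a correct, self-contained proof that reproduces precisely those classical arguments: the identity $I-T_{[n]}=(I-T)B_n$ together with the observation that $\|T_{[n]}\|<1$ for a single $n$ forces $I-T$ to be invertible is exactly Lin's device for $(2)\Rightarrow$ uniform mean ergodicity on $(I-T)X$, and your handling of $(1)\Leftrightarrow(3)$ via the Riesz projection and the Laurent expansion of the resolvent is the standard Dunford analysis. The only point worth flagging is that in $(2)\Rightarrow(3)$ you should state explicitly that the open mapping theorem is being applied to the surjection $I-T\colon X\to Y$ between \emph{Banach} spaces (which is why closedness of $Y$ is essential), and that the resulting bound is uniform in $z\in Y$; you do say this, but it is indeed the crux. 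Otherwise there is nothing to compare---you have written out what the paper cites.
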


    The equivalence of (1) and (3) was proved by Dunford in \cite{dunf43} and Lin proved the equivalence with (2) in \cite{lin74}.

   A property closely related to uniform mean ergodicity is quasi-compactness.  An operator $T\in \ll (X)$ is called \textit{quasi-compact} if there is a compact operator $K$ such that $\|T^n-K\|<1$ for some $n\geq 1$.  Yosida and Kakutani found that quasi-compactness provides sufficient conditions for uniform mean ergodicity and asymptotic convergence of the iterates of an operator. We record  this in the following theorem. Here, (and elsewhere in the paper) we will denote by $\sigma_p(T)$ the point-spectrum of $T$, the set of its eigenvalues, and by  $\T$  the set of complex numbers of modulus 1.

    \begin{theorem}[Theorem 4 and its Corollary in page 205 of \cite{yosikaku41}]
    \label{yosidakaku}
     Let $X$ be a complex Banach space and let $T\in \ll(X)$ be a quasi-compact operator. If $(\|T^n\|)_n$ is a bounded sequence, the following assertions hold:
     \begin{itemize}
         \item[(i)] $(T_{[n]})_n$ converges in norm to a finite rank projection $P$.
         \item[(ii)] $(T^n)_n$ converges in norm to a finite rank projection $P$ if, and only if,  $\sigma_p (T)\cap \T \subseteq \{1\}$.
         \item[(iii)] $(T^n)_n$ converges in norm to $0$ if, and only if, $\sigma_p (T)\cap \T =\emptyset$.
     \end{itemize}
     \end{theorem}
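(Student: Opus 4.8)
\textit{Proof strategy.} The plan is to run the Riesz theory of quasi-compact operators, peeling off the peripheral part of the spectrum. First I would record that quasi-compactness keeps the essential spectrum small: if $K$ is compact with $\|T^n-K\|<1$, then the image $\pi(T)$ of $T$ in the Calkin algebra $\ll(X)/\mathcal{K}(X)$ satisfies $\|\pi(T)^n\|=\|\pi(T^n)\|\le\|T^n-K\|<1$, so that the essential spectral radius obeys $r_{\mathrm{ess}}(T)\le\|\pi(T)^n\|^{1/n}<1$; and since $(\|T^m\|)_m$ is bounded we have $\sigma(T)\subseteq\overline{\D}$. Consequently the part of $\sigma(T)$ lying in $\{|z|>r_{\mathrm{ess}}(T)\}$ consists of isolated eigenvalues that are Riesz points (the resolvent has a pole of finite order and the spectral projection has finite rank at each of them) and that can accumulate only on $\{|z|=r_{\mathrm{ess}}(T)\}$. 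In particular $\sigma(T)\cap\T$ is a finite set $\{\mu_1,\dots,\mu_r\}$ of such Riesz points; being eigenvalues, these also exhaust $\sigma_p(T)\cap\T$.

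The key step I would then isolate is that each peripheral pole has order exactly one. Writing $Q_j$ for the Riesz projection at $\mu_j$ and $N=(T-\mu_j I)Q_j$, the operator $N$ is nilpotent, say $N^p=0\neq N^{p-1}$, and on $Q_jX$ one has $T^nQ_j=\sum_{k=0}^{p-1}\binom{n}{k}\mu_j^{n-k}N^k$. If $p\ge 2$, then $\binom{n}{p-1}^{-1}\mu_j^{p-1-n}T^nQ_j\to N^{p-1}\neq 0$ as $n\to\infty$, whereas the norm of the left-hand side is at most $\binom{n}{p-1}^{-1}\bigl(\sup_m\|T^m\|\bigr)\|Q_j\|\to 0$; this contradiction forces $p=1$. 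Hence $TQ_j=\mu_jQ_j$, $Q_jX=\ker(\mu_j I-T)$, and $T$ acts as the scalar $\mu_j$ on $Q_jX$.

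Next I set $Q=Q_1+\cdots+Q_r$, a finite-rank projection commuting with $T$, so that $X=QX\oplus(I-Q)X$ splits into $T$-invariant summands with $T|_{QX}=\bigoplus_j\mu_j I$ and $\sigma\bigl(T|_{(I-Q)X}\bigr)=\sigma(T)\setminus\T$. The latter set is closed (we removed finitely many isolated points of $\sigma(T)$) and is contained in $\{|z|<1\}$, hence compact with spectral radius $\rho_0<1$; therefore $\|T^n|_{(I-Q)X}\|\to 0$ geometrically, and a fortiori $T_{[n]}|_{(I-Q)X}\to 0$ in norm. On the finite-dimensional space $QX$ everything is explicit: $T^n|_{QX}=\sum_j\mu_j^nQ_j$ and $T_{[n]}|_{QX}=\sum_j\bigl(\tfrac1n\sum_{m=1}^n\mu_j^m\bigr)Q_j$, and since $|\mu_j|=1$ the scalar means $\tfrac1n\sum_{m=1}^n\mu_j^m$ tend to $1$ when $\mu_j=1$ and to $0$ otherwise. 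Assembling the two summands yields (i), the limit $P$ being the Riesz projection at $1$ --- equivalently the projection onto $\ker(I-T)$, of finite rank, and equal to $0$ when $1\notin\sigma(T)$. For (ii) and (iii), $(T^n)_n$ converges in norm iff $(\sum_j\mu_j^nQ_j)_n$ converges iff every $\mu_j$ equals $1$, i.e.\ iff $\sigma_p(T)\cap\T\subseteq\{1\}$; in that case the limit is again the finite-rank Riesz projection at $1$, and it is $0$ precisely when no $\mu_j$ occurs, i.e.\ when $\sigma_p(T)\cap\T=\emptyset$.

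The one genuinely delicate step is the order-one claim of the second paragraph --- extracting it from power-boundedness via the binomial estimate; the remainder is routine spectral bookkeeping, the point needing care being that $\sigma(T)\setminus\T$ really is compact and bounded away from the unit circle, so that the $(I-Q)X$-component washes out in every limit.
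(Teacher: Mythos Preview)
The paper does not supply its own proof of this theorem; it is quoted as a classical result of Yosida and Kakutani (with the citation ``Theorem~4 and its Corollary in page~205 of \cite{yosikaku41}'') and is used as a black box thereafter. So there is no in-paper argument to compare against.

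Your proposal is a correct and standard modern route to the result. The Calkin-algebra computation cleanly gives $r_{\mathrm{ess}}(T)<1$, and the Riesz/Fredholm description of $\sigma(T)\setminus\sigma_{\mathrm{ess}}(T)$ then forces $\sigma(T)\cap\T$ to be a finite set of Riesz points. Your binomial argument for simple poles on $\T$ is the right mechanism and is carried out correctly: the key inequality $\binom{n}{p-1}^{-1}\|T^nQ_j\|\to 0$ against $\binom{n}{p-1}^{-1}\mu_j^{p-1-n}T^nQ_j\to N^{p-1}\neq 0$ is exactly what power-boundedness buys. The splitting $X=QX\oplus(I-Q)X$ with $r\bigl(T|_{(I-Q)X}\bigr)<1$ then reduces (i)--(iii) to elementary scalar computations on the finite-dimensional peripheral part, as you indicate. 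The only point where a reader might want one extra sentence is the passage ``$\sigma(T)\setminus\T$ is closed'': this is because you have removed finitely many \emph{isolated} points of $\sigma(T)$, which you do say, so the argument is complete.
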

%
%

\subsection{Preliminaries on Fourier Algebras}
\label{PFA}

  The notation of \cite{kalau18} will be generally adopted here.

   Let $G$ be a locally compact group with identity $e$ and Haar measure $\mg$.

  \subsubsection{The group $C^\ast$ -algebra} The group  C$^*$-algebra of $G$, denoted by $C^*(G)$, is defined as the completion of $L^1(G)$ with the norm given by
    \[
        \|f\|_* = \sup _\pi \|\pi (f)\|,
    \]
    where the supremum is taken over the set of  all   unitary
    representations $\pi : G\longrightarrow \mathcal{U}(\h_\pi)$, and $\pi(f)\in \ll(\h_\pi)$, is defined by
      \[  \<{\pi (f)\xi,\eta} = \int _G \<{\pi(t)\xi,\eta} f(t) \dmg(t), \quad \xi,\eta \in \h_\pi.
    \]

\subsubsection{The Fourier and Fourier-Stieltjes algebras}
    The dual space of $C^*(G)$ can be identified with the Fourier-Stieltjes algebra $B(G)$
    consisting of functions on $G$
 of the form $\pi_\rho^\eta $,  where $\pi : G\longrightarrow \mathcal{U}(\h_\pi)$ is a continuous unitary representation of $ G$ and $\xi,\eta \in \h _\pi$, that are given by
    \[
        \pi_\xi^\eta(x) = \<{\pi (x) \xi, \eta}.
    \] The duality between  $C^*(G)$ and $B(G)$ is provided  by
    \[
        \<{f,\pi_\xi^\eta} =\int \pi_\xi^\eta(t)f(t) \dmg(t) = \<{\pi(f)\xi,\eta}, \quad f\in L^1(G),
    \] which also defines the norm of $B(G)$, by taking the supremum over all $f\in L^1(G)$, with $\|f\|_* \leq 1$.
 The functions  $\pi_\xi^\eta $ are known as  \emph{matrix coefficients} of the representation
  $\pi$ of $G$. When $\xi=\eta$ they are called \emph{diagonal} matrix coefficients. Diagonal matrix coefficients are positive definite functions, so that $B(G)$ is spanned by the set of positive definite functions on $G$.  We denote the set of continuous positive definite functions by $P(G)$ and by $P^1(G)$ those $\phi\in P(G)$ with $\phi (e)=1$. A useful summary of their properties can be found in Section 1.4 of \cite{kalau18}.
 We would like to display here one that will be especially useful for us, it  is proved in \cite[Theorem 3.7.7]{kalau18}:
 \begin{equation}\label{mult} \mbox{ if  $\phi(y)=1$, then } \phi(xy)=\phi(x) \mbox{ for every $x\in G$}.\end{equation}

Among the unitary representations of a locally compact group $G$, one is specially relevant for its capacity to carry the properties of the group. This is the left regular representation $\lambda_2\colon G\to  \mathcal{U}(L_2(G))$ defined by $\lambda_2(t)f(s)=(\delta_t\ast f)(s)=f(t^{-1}s)$, $s,t \in G$ and $f\in L_2(G)$. As happens with every unitary representation of $G$, $\lambda_2$ can be extended to a representation of $M(G)$. This extension is given by
\[ \lambda_2(\mu)f(s)=(\mu \ast f)(s)=\int_G f(t^{-1}s)d\mu(t)\quad s \in G.\]

    The Fourier algebra $A(G)$  can be described in several ways. We outline two of them here. For  a complete description we refer to \cite{eyma64} and Chapter 2 of \cite{kalau18}.
     If $C_{c}(G)$ denotes the space of continuous functions with compact support on $G$, one can define
    \[
        A(G)=\overline{\<{B(G)\cap C_c(G)}},
    \]
    where the closure is taken in the norm of $B(G)$.
    One can then prove that
    \[A(G)= \{f*\tilde g\: : \: f,g \in L^2(G)\},\]
        where, for a given function $g\colon G \to \C$,  $\tilde{g}\colon G\to \C$ is defined by $\tilde{g}(t)=\overline{g(t^{-1})}$.
    The
    elements of $A(G)$ can also be seen  as matrix coefficients of the left regular representation: if
      $u=f*\tilde g$, $f,g \in L_2(G)$,  then $u={(\lambda_2)}_{\overline{g}}^{\overline{f}}$. The Fourier algebra so defined becomes a closed ideal of the Fourier-Stieltjes algebra.

      The Lebesgue decomposition  of $B(G)$, first introduced in \cite[Remarque 3.20]{arsac76} and developed in \cite{kanilauschl03,miao99}, identifies  a closed linear subspace $B_s(G)$ of $B(G)$ such that every $\phi  \in B(G)$ can be expressed as $\phi=\phi_a+\phi_s$ with $\phi_a \in A(G)$ and $\phi_s\in B_s(G)$, and $\norm{\phi}=\norm{\phi_a}+\norm{\phi_s}$.
%

At a few points, we will be using two other norms on $B(G)$, the uniform  norm
$\norm{\phi}_\infty=\sup \{|\phi(x)|\colon x\in G\}$
and the \emph{multiplier norm}. The multiplier norm is defined by \[\norm{\phi}_{MA(G)}=\norm{M_\phi}.\]
With this norm, the algebra $B(G)$ is a subspace of the \emph{multiplier algebra}  $M(A(G))$ of $A(G)$, made of those operators $S\in \ll(A(G))$ with $S(uv)=S(u)v$ for every $u,v\in A(G)$.
It is well-known that such an $S$ is always a multiplication operator by  some bounded continuous  function on $G$ (see, for instance, Theorem 1.2.2 of \cite{lars71}).

The norms $\norm{\cdot}_{M(A(G))}$ and $\norm{\cdot}_{B(G)}$  are equivalent (and even coincide)  if and only if $G$ is amenable. Amenability of $A(G)$ is also a necessary and sufficient condition for $M(A(G))=B(G)$,  see \cite[Chapter 5]{kalau18} for further information on $MA(G)$.
It will be useful to  record here that, for any locally compact group and $\phi\in B(G)$,
\begin{equation}\label{comp.norms}
\norm{\phi}_\infty\leq\norm{\phi}_{MA(G)}\leq \norm{\phi}_{B(G)}.
\end{equation}
\subsubsection{The dual spaces $A(G)^\ast$ and $B(G)^\ast$}The Banach space dual of $A(G)$ can be identified with the   von Neumann algebra of $G$, denoted by $\VN (G)$. This algebra is defined as the closure, in the weak operator topology,  of $\lambda_2(L^1(G))$
      and its identification with  the dual space of $A(G)$ is realized through  the duality
    \[
        \<{\lambda_2(f), (f_1*\tilde f_2)^\vee} = \<{\lambda_2(f) f_1, f_2}_{L_2(G)},\quad f\in L_1(G),\, (f_1*\tilde f_2)^\vee\in A(G).
    \]

    When $G$ is commutative and $\widehat{G}$ is the group of characters of $G$, the Fourier-Stieltjes transform establishes a linear isometry between     $B(G)$  and $M(\widehat{G})$ and between $A(G)$ and $L^1(\widehat{G})$.
    The same identifications makes the algebra $C^\ast(G)$  isometrically isomorphic to $C_0(\widehat{G})$ and $VN(G)$ to $L_\infty(\widehat{G})$.

   The dual space of  $B(G)^\ast $ can be  canonically identified with the universal enveloping von Neumann algebra of $C^\ast(G)$. We give some details here on that construction, see, e.g., \cite[Remark 2.1.6]{kalau18}, or \cite[Section III.2]{take02}, for full details and proofs.
    The  GNS construction associates to  each $\phi \in P^1(G)$, a specific representation, $\pi_\phi \colon G\to \mathcal{U}(\h_{\phi})$ and a vector $v_\phi \in \h_\phi$ in such a way that $\phi=(\pi_{\phi})_{v_\phi}^{v_\phi}.$
    The \emph{universal representation} is  the representation of $G$, $\omega\colon G \to \mathcal{U}\bigl(\h_{\mathrm{uni}}\bigr)$  by unitary operators on the
    Hilbert space $\h_{\mathrm{uni}}:=\bigoplus_{\phi \in P^1(G)}\h_\phi$
 given by \[ \omega(g)\biggl( \sum_{\phi\in P^1(G)} w_\phi\biggr)=\sum_{\phi \in P^1(G)} \pi_\phi(g)w_\phi, \mbox{ where }w_\phi\in \h_\phi.\]
     The double commutant of $\omega(G)$ in $\ll\bigl(\h_{\mathrm{uni}}\bigr)$ is then a von Neumann algebra, the \emph{universal enveloping von Neumann algebra } of $C^\ast(G)$, denoted as $W^{\ast}(G)$. From the identification of $B(G)$ with $(C^\ast(G))^*$, one can obtain a natural identification
      \[ \<{\omega(g),\psi}=\psi(g),\quad \mbox{ for any $\psi\in B(G)$} .\]
      The elements of $W^\ast(G)$ can be ultraweakly approximated by linear combinations of operators in $\omega(G)$. It follows that every unitary representation of $\pi \colon G\to \mathcal{U}(\h)$,  can be extended  to a representation $\pi''\colon W^\ast(G)\to \ll(\h)$. The     duality between $W^\ast(G)$ and $B(G)$ is then  given by the relation
      \begin{equation} \label{defb*}\<{L,\phi}=\<{\pi''(L) \xi,\eta}\quad \mbox{ for each } L\in W^*(G) \mbox{ and }\phi=\pi_\xi^\eta \in B(G).\end{equation}

       The preceding construction    furnishes   $W^\ast(G)$ with a naturally  defined   multiplication, the multiplication of operators.
There is a different way to address the  multiplication of elements in $W^\ast(G)$,  one that can be used in the second dual of any  Banach algebra. It  was introduced, in a more general form, in \cite{aren51} and has since been  known as the Arens-construction, or the Arens multiplication. It is well known, see \cite[Theorem 7.1]{civinyood61}, that for $\mA=C^\ast(G)$, this multiplication coincides with  the operator multiplication that  $W^\ast(G)$ acquires as the enveloping algebra of $C^\ast(G)$.

       The  Arens multiplication on the bidual  $\mA^\bd$  of a Banach algebra  $\mA$ is defined after introducing two module actions on $\mA^\ast$,  one by elements of  $\mA$ and the other by elements of  $\mA^\bd$. These actions, along with  the Arens multiplication, are presented  below, for  $a,b\in \mA$, $\phi\in \mA^\ast$ and $T,T_1,T_2\in \mA^\bd$.
       \label{module}
       \begin{align*}
         \phi \cdot a &\in \mA^\ast  \mbox{ is defined by } \<{\phi \cdot a,b}=\<{\phi,ab},\\
         T\cdot \phi &\in \mA^\ast  \mbox{ is defined by } \<{T\cdot\phi ,a}=\<{T,\phi\cdot a},\\
         T_1\cdot T_2 &\in \mA^\bd  \mbox{ is defined by } \<{T_1\cdot T_2 ,\phi}=\<{T_1,T_2\cdot \phi}.
       \end{align*}
       It is clear from the above definitions that
       \begin{align*}
         \norm{\phi\cdot a}&\leq \norm{\phi }\cdot \norm{a},\\
         \norm{T\cdot \phi}&\leq \norm{T }\norm{\phi },\\
         \norm{T_1\cdot T_2}&\leq \norm{T_1 }\norm{T_2}.
       \end{align*}

%

  %

%
%
%
%
%
If $\mA$ is a  Banach algebra,  $\Delta (\mathcal{A})$ will always denote its spectrum, i.e., the set of multiplicative bounded functionals of $\mathcal{A}$.




\subsection{Preliminaries on ergodicity in Fourier algebras}
We outline here the  concepts that will let us dualize the ergodic theory of random walks on groups.

Every function $\phi\in B(G)$ defines a bounded linear operator $M_\phi\colon A(G)\to A(G)$ given by $M_\phi u(s)=u(s)\phi(s)$, $u\in A(G),\;s\in G$.
If  we  define the augmentation ideal in $A(G)$,
    \[
        A_0(G) = \{u\in A(G)\: : \: u(e)=0\},
    \] then $A_0(G)$ is stable under the action of $M_\phi$. We denote by $M_\phi^0\colon A_0(G)\to A_0(G)$ the restriction of $M_\phi$ to $A_0(G)$.

We will also write, for $\phi \in B(G)$, $\phi_{[n]}=\frac{1}{n}(\phi+\cdots+\phi^n)$ and will use interchangeably the expressions $(M_\phi)_{[n]}$ and $M_{\phi_{[n]}}$, as well as the expressions
$(M_\phi)^n$ and $M_{\phi^n}$.


\begin{definition}Let $G$ be a locally compact group.
We say that $\phi \in P^1(G)$ is:
    \begin{itemize}
        \item ergodic if $\lim_n M_{\phi_{[n]}} ^0 u =0$, for every $u\in A_0(G)$,
        \item uniformly ergodic if $\lim_n\|M_{\phi_{[n]}} ^0\|=0$,
        \item completely mixing if  $\lim_n M_{\phi^n} ^0 u =0$, for every $u\in A_0(G)$,
        \item uniformly completely mixing if $\lim_n\|M_{\phi^n} ^0\| =0$.
    \end{itemize}
\end{definition}
A longstanding objective in the study of the random walk induced by a probability  measure $\mu$  has been to classify its   ergodic behaviour through algebraic properties of the support of $\mu$. Among these  properties two stand out: adaptedness, the support of $\mu$ is not contained in any proper closed subgroup of $G$, and strict aperiodicity, the support of $\mu$ is not contained in any translate of a proper closed normal subgroup of $G$.   When $G$ is abelian, these properties can be characterized by properties of the Fourier-Stieltjes transform $\widehat{\mu}$. We take these characterizations as definitions on positive definite functions:
\begin{definition}     Let $G$ be a locally compact group and let $\phi\in P^1(G)$. We define the sets $H_\phi= \{x\in G\: : \: \phi(x) =1\}$ and $E_\phi =\{x\in G\: : \: |\phi(x)|=1\}$. We say $\phi$ is:
\begin{itemize}
    \item \emph{adapted,} if $H_\phi = \{e\}$,
    \item \emph{strictly aperiodic,} if $E_\phi =\{e\}$.
\end{itemize}
\end{definition}
These properties characterize the convergence of the means and powers of $M_\phi$ in the strong operator topology, see \cite[Theorems 2.2 and 2.8]{musta}. By well-known results, e.g. \cite[Theorem 3.7]{kalau18}, this is also  equivalent to the convergence of the means and powers of $\phi$ in the compact-open topology.
\begin{proposition}
\label{P:meOpen}
    Let $G$ be a locally compact group and let $\phi\in P^1(G)$. The following are equivalent.
    \begin{itemize}
   \item[(i)]  $M_\phi$ is mean ergodic.
   \item[(ii)] $H_\phi$ is an open set.
   \item[(iii)] $(\phi_{[n]})$ is convergent to $\Cf_{H_\phi}$ in the compact open topology.
    \end{itemize}
\end{proposition}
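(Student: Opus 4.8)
The plan is to establish the two biconditionals $(i)\Leftrightarrow(ii)$ and $(ii)\Leftrightarrow(iii)$ directly from the material above. Two preliminary facts are used throughout. First, $\phi^k\in P^1(G)$ for every $k\ge 1$ (it is a diagonal matrix coefficient of $\pi_\phi^{\otimes k}$), so by \eqref{comp.norms} $\norm{M_\phi^k}=\norm{\phi^k}_{MA(G)}\le\norm{\phi^k}_{B(G)}=\phi^k(e)=1$; hence $M_\phi$ is a contraction, every mean $(M_\phi)_{[n]}$ has norm $\le 1$, and $\norm{\phi_{[n]}}_{B(G)}\le 1$. Second, by \eqref{mult}, $H_\phi$ is a closed subgroup on whose left cosets $\phi$ is constant, and $M_\phi u=u$ forces $u\equiv 0$ on $G\setminus H_\phi$, so $\ker(I-M_\phi)=\{u\in A(G):\supp u\subseteq H_\phi\}$.

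For $(i)\Rightarrow(ii)$ I would argue as follows: if $M_\phi$ is mean ergodic with limit the projection $P$, then evaluating at $e$ gives $(M_\phi)_{[n]}u(e)=u(e)$ for all $n$ (because $\phi(e)=1$), whence $Pu(e)=u(e)$, so $P\neq 0$ and $\ker(I-M_\phi)=\range P\neq\{0\}$; choosing $0\neq u$ there, the nonempty open set $\{u\neq 0\}$ is contained in $H_\phi$, and a subgroup with nonempty interior is open. For $(ii)\Rightarrow(iii)$: if $H_\phi$ is open then $G/H_\phi$ is discrete, so a compact $K\subseteq G$ meets only finitely many cosets $s_1H_\phi,\dots,s_mH_\phi$; one has $\phi_{[n]}\equiv 1=\Cf_{H_\phi}$ on $H_\phi$, while on a coset $s_iH_\phi$ with $s_i\notin H_\phi$ the function $\phi_{[n]}$ is the constant $\tfrac1n\sum_{k=1}^n\phi(s_i)^k$, of modulus at most $\tfrac{2}{n\,|1-\phi(s_i)|}$ since $|\phi(s_i)|\le 1$ and $\phi(s_i)\neq 1$; maximizing over these finitely many cosets gives $\sup_K|\phi_{[n]}-\Cf_{H_\phi}|\to 0$. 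The converse $(iii)\Rightarrow(ii)$ is immediate: a limit of continuous functions, uniform on compacta of a locally compact space, is continuous, so $\Cf_{H_\phi}$ is continuous, hence $>\tfrac12$ on a neighbourhood of $e$, hence $H_\phi$ is open.

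The work is in $(ii)\Rightarrow(i)$. Assuming $H_\phi$ open, I would first note $\Cf_{H_\phi}\in P^1(G)$: it is the diagonal matrix coefficient $\langle\rho(\cdot)\delta_{eH_\phi},\delta_{eH_\phi}\rangle$ of the permutation representation $\rho$ of $G$ on $\ell^2(G/H_\phi)$ (well defined since $G/H_\phi$ is discrete), and it is continuous because $H_\phi$ is open. Put $P:=M_{\Cf_{H_\phi}}$; then $P$ is a bounded idempotent (as $\Cf_{H_\phi}$ is idempotent) with $\range P=\ker(I-M_\phi)$, and $\phi\,\Cf_{H_\phi}=\Cf_{H_\phi}$ yields $(M_\phi)_{[n]}-P=M_{\psi_n}$ with $\psi_n:=\phi_{[n]}-\Cf_{H_\phi}$, a sequence of norm $\le 2$ in $B(G)$ that is constant on cosets of $H_\phi$ and vanishes on $H_\phi$. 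It remains to show $M_{\psi_n}\to 0$ strongly. Since $\sup_n\norm{M_{\psi_n}}\le 2$ and $\{f*\tilde g:f,g\in C_c(G)\}$ is dense in $A(G)=\{f*\tilde g:f,g\in L^2(G)\}$, it suffices to treat $u=f*\tilde g$ with $f,g\in C_c(G)$; such a $u$ has compact support, which (again because $G/H_\phi$ is discrete) meets only finitely many cosets $s_1H_\phi,\dots,s_mH_\phi$, so $\psi_n u=\sum_{i=1}^m\psi_n(s_i)\,u\,\Cf_{s_iH_\phi}$. Each $\Cf_{s_iH_\phi}$ is a left translate of $\Cf_{H_\phi}$, hence lies in $B(G)$ with $\norm{\Cf_{s_iH_\phi}}_{B(G)}=1$, so it is a multiplier of $A(G)$ of norm $\le 1$; and $\psi_n(s_i)\to 0$ for each $i$ by $(ii)\Rightarrow(iii)$ (trivially so when $s_i\in H_\phi$, where $\psi_n$ vanishes). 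Hence $\norm{\psi_n u}_{A(G)}\le\big(\sum_{i=1}^m|\psi_n(s_i)|\big)\norm{u}_{A(G)}\to 0$, so $(M_\phi)_{[n]}=P+M_{\psi_n}\to P$ strongly and $M_\phi$ is mean ergodic.

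The one non-formal ingredient is this last step: converting the scalar (compact-open) convergence $\phi_{[n]}\to\Cf_{H_\phi}$ into strong operator convergence of $(M_\phi)_{[n]}$. Openness of $H_\phi$ enters there twice — to make $\Cf_{H_\phi}$ a genuine element of $B(G)$, so that $P$ is a legitimate multiplier, and to make $G/H_\phi$ discrete, so that a compactly supported $u$ meets only finitely many cosets; everything else is bookkeeping. If one prefers to appeal to the literature instead, $(i)\Leftrightarrow(ii)$ is \cite[Theorems 2.2 and 2.8]{musta} and $(ii)\Leftrightarrow(iii)$ follows from the fact recorded in \cite{kalau18} that on bounded subsets of $B(G)$ compact-open convergence is equivalent to strong convergence of the associated multiplication operators.
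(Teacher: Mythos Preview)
Your argument is correct. Note, however, that the paper does not supply its own proof of this proposition: the sentence preceding it simply refers to \cite[Theorems~2.2 and 2.8]{musta} for $(i)\Leftrightarrow(ii)$ and to \cite[Theorem~3.7]{kalau18} for the passage to compact-open convergence in $(iii)$ --- exactly the references you reproduce in your closing remark. So your write-up is not an alternative to the paper's proof but a self-contained replacement for two citations. What you gain is transparency: the reader sees precisely where openness of $H_\phi$ is used (to make $\Cf_{H_\phi}$ a bona fide element of $P^1(G)$ via the quasi-regular representation, and to reduce a compactly supported $u$ to finitely many cosets), and the argument requires nothing beyond the elementary facts already recorded in Section~\ref{PFA}. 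The only cost is length; if you keep the direct proof, the final paragraph pointing back to \cite{musta} and \cite{kalau18} can be dropped.
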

\begin{proposition} \label{iteratessot} Let $G$ be a locally compact group and let $\phi\in P^1(G)$. The following assertions are equivalent.
\begin{itemize}
 \item[(i)] $(M_{\phi^n})_n$ is convergent in $\mathcal{L}(A(G))$ endowed with the strong operator topology.
 \item[(ii)] $H_\phi=E_\phi$ is an open set
 \item[(iii)]  $(\phi^n)_n$ is convergent to $\Cf_{H_\phi}$ in the compact open topology.
 \
  \end{itemize}
\end{proposition}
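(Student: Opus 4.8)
The plan is to close the cycle of implications (i)$\Rightarrow$(ii)$\Rightarrow$(iii)$\Rightarrow$(i), using three standing facts. First, since the pointwise product of positive definite functions is positive definite, $\phi^n\in P^1(G)$ for every $n$, so by \eqref{comp.norms} $\norm{M_\phi^n}=\norm{\phi^n}_{MA(G)}\le\norm{\phi^n}_{B(G)}=\phi^n(e)=1$; in particular $M_\phi$ and $M_{\Cf_{H_\phi}}$ are contractions. Second, by \eqref{mult} the function $\phi$ is constant on every left coset of $H_\phi$. Third, when $H_\phi$ is open the homogeneous space $G/H_\phi$ is discrete, the cosets of $H_\phi$ are clopen, $\Cf_{H_\phi}$ (the indicator of an open subgroup) lies in $P^1(G)\subseteq B(G)$ and so does every translate $\Cf_{xH_\phi}$; moreover $A(G)\cap C_c(G)$ is dense in $A(G)$.

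For (i)$\Rightarrow$(ii): if $(M_{\phi^n})_n$ converges to $P$ strongly, so do the Ces\`aro averages $(M_\phi)_{[n]}=\tfrac1n\sum_{j=1}^nM_{\phi^j}$, hence $M_\phi$ is mean ergodic and $H_\phi$ is open by Proposition~\ref{P:meOpen}. To obtain $E_\phi=H_\phi$ I would argue by contradiction: if $x\in E_\phi\setminus H_\phi$ then $\phi(x)\in\T\setminus\{1\}$, so $(\phi^n(x))_n$ does not converge; picking $u\in A(G)$ with $u(x)\ne0$ (possible since $A(G)$ is a regular algebra of functions on $G$) and using that $A(G)\embed C_0(G)$ contractively, norm convergence $M_{\phi^n}u\to Pu$ would force $\phi^n(x)u(x)=(M_{\phi^n}u)(x)\to(Pu)(x)$, a contradiction. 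Thus $E_\phi\subseteq H_\phi$, and the reverse inclusion is automatic.

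For (ii)$\Rightarrow$(iii): assume $H_\phi=E_\phi$ is open. On $H_\phi$ we have $\phi\equiv1$, so $\phi^n=\Cf_{H_\phi}$ there; for a compact $K\subseteq G$ the set $K\setminus H_\phi$ is compact and disjoint from $E_\phi$, so the continuous function $\absolute{\phi}$ attains on it a maximum $\rho_K<1$, whence $\sup_{x\in K}\absolute{\phi^n(x)-\Cf_{H_\phi}(x)}\le\rho_K^{\,n}\to0$, which is exactly compact-open convergence. For (iii)$\Rightarrow$(i) I would first note that (iii) already gives (ii): $\Cf_{H_\phi}$ is a compact-open limit of the continuous functions $\phi^n$, hence continuous, so $H_\phi$ is open; and $x\notin H_\phi$ forces $\absolute{\phi(x)}^n=\absolute{\phi^n(x)}\to0$, so $x\notin E_\phi$. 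Now, by contractivity it suffices to prove $M_{\phi^n}u\to M_{\Cf_{H_\phi}}u$ in $A(G)$ for $u\in A(G)\cap C_c(G)$. Fix such a $u$, put $L=\supp u$; since $G/H_\phi$ is discrete, the image of $L$ there is finite, so $L$ meets only finitely many left cosets $C_1,\dots,C_m$ of $H_\phi$. Writing $u_j=\Cf_{C_j}u$, each $u_j\in A(G)\cap C_c(G)$ (as $\Cf_{C_j}\in B(G)$ and $C_j$ is clopen) and $u=\sum_{j=1}^m u_j$. By \eqref{mult}, $\phi$ is constant equal to some $c_j$ on $C_j$, with $\absolute{c_j}<1$ unless $C_j=H_\phi$ (then $c_j=1$), because $C_j\setminus H_\phi\subseteq G\setminus E_\phi$. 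Hence $\phi^nu_j=c_j^{\,n}u_j$, and
\[
M_{\phi^n}u=\sum_{j=1}^m c_j^{\,n}u_j\longrightarrow\sum_{j\,:\,c_j=1}u_j=\Cf_{H_\phi}u
\]
in the norm of $A(G)$, since $\absolute{c_j}^n\norm{u_j}_{A(G)}\to0$ for the finitely many $j$ with $\absolute{c_j}<1$. This yields (i).

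The step I expect to be delicate is (iii)$\Rightarrow$(i): uniform smallness of $\phi^n$ on $\supp u$ does not by itself control $\norm{\phi^nu}_{MA(G)}$ (the multiplier norm of a compactly supported function can dwarf its supremum norm), so a plain approximation argument inside $C_0(G)$ does not work. What rescues the proof is the coset decomposition, which relies crucially on local compactness of $G$ --- forcing $\supp u$ to meet only finitely many cosets of the open subgroup $H_\phi$ --- together with the fact, via \eqref{mult}, that $\phi$ is locally constant modulo $H_\phi$, so that on $\supp u$ it behaves like a finite-dimensional diagonal operator.
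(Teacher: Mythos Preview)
Your proof is correct. The paper does not supply its own argument for this proposition: it simply cites \cite[Theorems 2.2 and 2.8]{musta} for the equivalence (i)$\Leftrightarrow$(ii) and invokes \cite[Theorem 3.7]{kalau18} for the link with compact-open convergence. Your self-contained argument is therefore a genuine addition. The coset decomposition in (iii)$\Rightarrow$(i)---reducing $M_{\phi^n}u$ for compactly supported $u$ to a finite diagonal sum $\sum_j c_j^{\,n}u_j$ via the discreteness of $G/H_\phi$ and \eqref{mult}---is exactly the right device, and your observation that bare uniform smallness of $\phi^n$ on $\supp u$ would \emph{not} suffice to control $\norm{\phi^n u}_{A(G)}$ shows you have identified the actual content of the step.

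One small slip of language: you write that ``so does every translate $\Cf_{xH_\phi}$'' lie in $P^1(G)$, but for $x\notin H_\phi$ the function $\Cf_{xH_\phi}$ vanishes at $e$ and is not positive definite. All you need (and all you use) is $\Cf_{xH_\phi}\in B(G)$, which holds because it is a matrix coefficient of the quasi-regular representation $\lambda_{G/H_\phi}$, cf.\ \eqref{eq}. This does not affect the validity of the argument.
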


 \section{Uniform mean ergodicity of $M_\phi$.}
 In our paper \cite{gajoro24}, the uniform mean  ergodicity of the convolution operator $\lambda_1(\mu)$ was characterized in terms of the position of 1 in its spectrum, the nonsingularity of convolution powers of $\mu$ and quasicompactness of $\lambda_1(\mu)$. Here, we return  to these  perspectives in the context of multiplication operators.
\subsection{Uniform mean ergodicity and  the spectrum}
By the Dunford-Lin Theorem, Theorem \ref{T:DL},
 $1$ is isolated in $\sigma(T)$ whenever $T$ is a uniformly mean ergodic operator.
 The converse may fail for general operators (the operator $T=I-V$ with $V$ being the Volterra operator is such an example, see
 \cite[Remark 5.26 (a)]{gajoro24}) but it does hold in the case of convolution operators, see \cite[Theorem 5.5]{gajoro24} and, as we prove in this section,  in the case of  the  multiplication operators $M_\phi$ discussed here. 

  Our main tools will be the two following general results.
\begin{theorem}[\cite{gro04},Theorem 1; \cite{bofrejo07},Corollary 10, Remark 11]
\label{P:extension}
    Let $X$ be a Banach space, $H\subseteq X^*$ a separating subspace, $\Omega\subseteq \C$ a domain and $a\in \Omega$. If $f:\Omega\setminus\{a\} \longrightarrow X$ is a holomorphic function such that $x^* \circ f$ admits a holomorphic extension to $\Omega$ for each $x^*\in H$, then $f$ admits a holomorphic extension to $\Omega$.
\end{theorem}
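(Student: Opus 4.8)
The plan is to read the hypothesis as asserting that the isolated singularity of $f$ at $a$ is \emph{removable}, and to detect this through the Laurent expansion of $f$. First I would fix $r>0$ with the closed disc $\overline{D}(a,r)$ contained in $\Omega$, so that $f$ is holomorphic on the punctured disc $D(a,r)\setminus\{a\}\subseteq\Omega\setminus\{a\}$. A norm-differentiable $X$-valued function on an annulus admits a Laurent expansion converging in norm there: the standard scalar argument (two concentric circles together with a geometric-series expansion of the Cauchy kernel) transfers verbatim, since every integral occurring is a Riemann integral of a norm-continuous $X$-valued function on a compact parameter interval. Hence one may write
\[
f(z)=\sum_{n=-\infty}^{\infty}c_n\,(z-a)^n,\qquad 0<|z-a|<r,
\]
with $c_n\in X$ given by $c_n=\frac{1}{2\pi i}\oint_{|w-a|=\rho}\frac{f(w)}{(w-a)^{n+1}}\,dw$ for any $0<\rho<r$.

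The second step is to test this expansion against $H$. Fix $x^{*}\in H$. Then $x^{*}\circ f$ is a scalar holomorphic function on $\Omega\setminus\{a\}$, and, since $x^{*}$ is linear and continuous and may therefore be pulled inside the contour integrals above, the $n$-th Laurent coefficient of $x^{*}\circ f$ at $a$ equals $\langle x^{*},c_n\rangle$. By hypothesis $x^{*}\circ f$ extends holomorphically to $\Omega$, so its principal part at $a$ vanishes, i.e.\ $\langle x^{*},c_n\rangle=0$ for every $n<0$. As this holds for every $x^{*}\in H$ and $H$ separates the points of $X$, we conclude $c_n=0$ for every $n<0$.

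Consequently the principal part of $f$ at $a$ vanishes, and $g(z):=\sum_{n\ge 0}c_n\,(z-a)^n$ is a holomorphic $X$-valued function on $D(a,r)$ coinciding with $f$ on the punctured disc; gluing $g$ with $f|_{\Omega\setminus\{a\}}$ produces the desired holomorphic extension of $f$ to all of $\Omega$. There is no serious obstacle here: the only points needing care are the Banach-space version of the Laurent expansion and the interchange of $x^{*}$ with the integrals defining the coefficients, both of which are routine facts of vector-valued complex analysis. It is worth noting that a \emph{separating} subspace (rather than a norming one, or a local boundedness assumption on $f$) suffices precisely because the a priori holomorphy of $f$ on $\Omega\setminus\{a\}$ already furnishes the $X$-valued coefficients $c_n$, so that the separating property is invoked only to pass from the scalar vanishing $\langle x^{*},c_n\rangle=0$ ($x^{*}\in H$) to $c_n=0$ in $X$. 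This is, in essence, the argument of \cite{gro04} and \cite{bofrejo07}.
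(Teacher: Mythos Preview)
Your argument is correct. Note, however, that the paper does not supply its own proof of this theorem: it is quoted as an external result from \cite{gro04} and \cite{bofrejo07}, so there is no in-paper proof to compare against. Your Laurent-expansion approach is the standard route and is essentially the argument underlying the cited references; the key observation you isolate---that the a priori holomorphy of $f$ on $\Omega\setminus\{a\}$ already yields vector-valued Laurent coefficients $c_n\in X$, so that a merely separating (rather than norming) subspace $H$ suffices to force $c_n=0$ for $n<0$---is exactly the point.
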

We apply this Theorem to subalgebras $\mA\subseteq C_b(Y)$ of the space of bounded continuous functions of a topological space $Y$. Under the conditions of the proposition below, we use the family $\{\delta_{y,f}\: : \: y\in Y, f\in \mA\}\subseteq \ll (\mA)^*$, given by $\<{\delta_{y,f},T}=Tf(y)$, as a separating subspace of $\ll(\mA)$ (i.e., for different $T_1,T_2\in\ll(\mA)$ there are $y,f$ such that $T_1f(y)\neq T_2f(y)$).

\begin{proposition}
 \label{cb}
 Let $Y$ be a Hausdorff, normal locally compact space and let $\mA$ be a  subalgebra of $ C_b(Y)$  equipped with a Banach algebra norm $\norm{\cdot}$ satisfying $\norm{\cdot}\geq \norm{\cdot}_\infty$ and
 \begin{equation}
   \label{cond}   \mbox{ for each $y\in Y$ there is $f\in\mA$ with $f(y)\neq 0$}.
 \end{equation}
 If  $\psi\in C_b(G)$  is such that $M_\psi:\mA\to\mA$, $f\mapsto \psi f$ is a bounded  linear operator with $\|M_\psi\|\leq 1$,  then $M_\psi$ is uniformly mean ergodic if and only if $1$ is not an accumulation point in $\sigma(M_\psi)$.
 \end{proposition}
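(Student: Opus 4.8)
The plan is to reduce both implications to the Dunford--Lin criterion of Theorem~\ref{T:DL}, which applies because $\norm{M_\psi^n}\le\norm{M_\psi}^n\le 1$, so $(\norm{M_\psi^n})_n$ is bounded. Necessity is then immediate: if $M_\psi$ is uniformly mean ergodic, Theorem~\ref{T:DL} forces either $1\notin\sigma(M_\psi)$ or $1$ to be a pole of the resolvent, and in both cases $1$ is an isolated point of the closed set $\sigma(M_\psi)$, hence not an accumulation point.

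For the converse I would assume $1$ is not an accumulation point of $\sigma(M_\psi)$; if $1\notin\sigma(M_\psi)$ we finish at once through Theorem~\ref{T:DL}, so the real content is the case where $1$ is an \emph{isolated} point of $\sigma(M_\psi)$. Fix $r>0$ with $D(1,r)\cap\sigma(M_\psi)=\{1\}$. Two preparatory observations are needed. First, $\overline{\psi(Y)}\subseteq\sigma(M_\psi)$: whenever $z=\psi(y_0)$, a function of the form $(zI-M_\psi)g$ vanishes at $y_0$, so $zI-M_\psi$ is not onto $\mA$ by hypothesis~\eqref{cond}, and closedness of $\sigma(M_\psi)$ gives the inclusion. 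Second, for $z\notin\sigma(M_\psi)$ --- in particular for $z\in D(1,r)\setminus\{1\}$ --- we have $z-\psi(t)\ne 0$ for every $t$, and applying $zI-M_\psi$ to $R(z,M_\psi)f$ shows that $R(z,M_\psi)f$ is precisely the function $t\mapsto f(t)/(z-\psi(t))$ (note $\mA$ itself need not be inverse-closed, but this element lies in $\mA$ because it is a value of the resolvent).

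The core step is a vector-valued holomorphic extension via Theorem~\ref{P:extension}, with $X=\ll(\mA)$ and $H\subseteq\ll(\mA)^\ast$ the linear span of the functionals $\delta_{y,f}$; these separate operators on $\mA$ because elements of $\mA$ are honest functions on $Y$, distinguished by point evaluations. I would set $g(z)=(z-1)R(z,M_\psi)$, holomorphic on $D(1,r)\setminus\{1\}$, and compute $\<{\delta_{y,f},g(z)}=(z-1)f(y)/(z-\psi(y))$ there. The decisive point is that $\psi(y)\in\overline{\psi(Y)}\subseteq\sigma(M_\psi)$, so $\psi(y)$ either equals $1$ or lies outside $D(1,r)$: in the first case the expression is the constant $f(y)$, in the second it is holomorphic on all of $D(1,r)$. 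Either way $\delta_{y,f}\circ g$, and hence $x^\ast\circ g$ for every $x^\ast\in H$, extends holomorphically to $D(1,r)$; Theorem~\ref{P:extension} then yields a holomorphic extension of $g$ to $D(1,r)$. Thus $R(\cdot,M_\psi)$ has at most a simple pole at $1$; since $1\in\sigma(M_\psi)$, its Riesz projection $g(1)=\frac{1}{2\pi i}\oint_{\absolute{z-1}=\varepsilon}R(z,M_\psi)\,dz$ is nonzero, so $1$ is a pole of order exactly $1$, and Theorem~\ref{T:DL} delivers uniform mean ergodicity of $M_\psi$.

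I expect the main obstacle to be the second preparatory observation together with fitting the pieces into Theorem~\ref{P:extension}: one must argue that $R(z,M_\psi)f=f/(z-\psi)$ without assuming $\mA$ inverse-closed, and the inclusion $\overline{\psi(Y)}\subseteq\sigma(M_\psi)$ is exactly what both keeps the denominator away from zero on $D(1,r)\setminus\{1\}$ and forces $\psi(y)\notin D(1,r)\setminus\{1\}$, which is what makes the apparent singularity of $\delta_{y,f}\circ g$ at $z=\psi(y)$ disappear. Everything else is routine.
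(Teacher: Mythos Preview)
Your proof is correct and follows the same overall architecture as the paper's: both use the separating family $\{\delta_{y,f}\}$ and Theorem~\ref{P:extension} to show that $(z-1)R(z,M_\psi)$ extends holomorphically across $1$, then invoke Theorem~\ref{T:DL}. The difference lies only in how the scalar extensions are obtained. The paper introduces the auxiliary operator $M_\psi$ on $C_b(Y)$, observes that $\psi^{-1}(1)$ is open (since $1$ is isolated in $\overline{\psi(Y)}\subseteq\sigma(M_\psi)$), cites \cite{bjr18} to get uniform mean ergodicity on $C_b(Y)$, and then deduces the scalar extensions from Dunford--Lin applied there. You bypass this detour entirely: your dichotomy on $\psi(y)$ (either $\psi(y)=1$, giving a constant, or $\psi(y)\notin D(1,r)$, giving holomorphy on the whole disk) establishes the scalar extensions directly by elementary complex analysis. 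Your route is more self-contained and avoids the external reference; the paper's route has the mild conceptual advantage of explaining the result as a transfer of uniform mean ergodicity from $C_b(Y)$ to $\mA$.
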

 \begin{proof}
We only have to show that $M_\psi:\mathcal{A}\to \mathcal{A}$ is uniformly mean ergodic when $1$ is not an accumulation point in $\sigma(M_\psi)$.
In this proof we will need to work  with two resolvent operators: for a given $z\in \C$,
we will denote by $R^{C_b(Y)}(z,M_\psi)$ the resolvent operator with $M_\psi$ seen as an operator  in $\ll(C_b(Y))$, and  by $R ^{\mA}(z,M_\psi) $    the resolvent operator  with values in $\ll(\mA)$. We use analogous notation for the spectra. We observe that  \eqref{cond} implies that $\overline{\psi\left(Y\right)}\subseteq \sigma^\mA(M_\psi)$. It is also well known that $\sigma^{C_b(Y)}(M_\psi)=\overline{\psi(Y)} $. As $\sigma^{C_b(Y)}(M_\psi)=\overline{\psi(Y)}\subseteq \sigma^\mA(M_\psi)$,  for each $f\in \mA\subseteq C_b(Y)$ and $y\in Y$ we get that
\begin{align*}
  R^{C_b(Y)}(z,M_\psi)f(y)&=\frac{f(y)}{z-\psi(y)}\in C_b(Y), \mbox{ for each $z\in \C\setminus \overline{\psi(Y)}$},\\
  R ^{\mA}(z,M_\psi)f(y)&=\frac{f(y)}{z-\psi(y)}\in \mA, \mbox{ for each $z \in \C\setminus \sigma^\mA(M_\psi)$}.
\end{align*}
 By hypothesis, there is $R>0$ such that $B(1,R)\cap \sigma^\mA(M_\psi)=\{1\}$. This yields that $|1-\psi(y)|\geq R >0$, for each $y\in Y$ with $\psi(y)\neq 1$. Furthermore, the set $\psi^{-1}(1)=\psi^{-1}(B(1,R))$ is then  open in $Y$ and  the multiplication operator $M_\psi:C_b(Y)\to C_b(Y)$ is uniformly mean ergodic by \cite[Theorem 2.7]{bjr18}.


 Then,  by using (1) implies (3) in  Theorem~\ref{T:DL},  we get that $(\cdot-1)R^{C_b(Y)}(\cdot,M_\psi): B(1,R)\setminus\{1\}\to \mathcal{L}(C_b(Y))$  admits holomorphic extension in $z=1$.

 We have that, for each $z\in B(1,R)$,
\[\<{\delta_{y,f}, (z-1)R ^{\mA}(z,M_\psi)}=\<{\delta_{y,f}, (z-1) R ^{C_b(Y)}(z,M_\psi)},\] for every $y\in Y$ and every $f\in \mA$.

 Having seen that $\delta_{y,f}\circ (\cdot-1) R^{\mA}(\cdot,M_\psi)$  is holomorhic in $B(1,R)$ and, taking into account that
 $\{\delta_{y,f}:\ y\in Y, f\in\mathcal{A}\}\subseteq  \mathcal{L}(\mathcal{A})^*$ is separating,  we deduce from Theorem \ref{P:extension} that 1 is a pole of order 1 of $R^{\mA}(\cdot,M_\Psi)$. We conclude by applying (3) implies (1) in Theorem~\ref{T:DL}.
  \end{proof}

We can now characterize uniform mean ergodicity of the operators $M_\phi$. The  same approach can be used to  characterize the convergence of the means in the norm of $B(G)$.

\begin{theorem}
\label{P:UMEequiv1isol}
   Let $G$ be a locally compact group and let $\phi\in P^1(G)$.
   \begin{itemize}
   \item[(i)] $M_\phi \in \ll(A(G))$ is uniformly mean ergodic if and only if $1$ is isolated in $\sigma (M_\phi)$.
   \item[(ii)] $(M_{\phi^n})_n$ is convergent in the norm topology if and only if $\sigma(M_\phi)\cap\T=\{1\}$ and $1$ is isolated in $\sigma (M_\phi)$.
   \item[(iii)] $(\phi_{[n]})$ is convergent to $1_{H_\phi}$ in $B(G)$ if and only if $1$ is isolated in $\sigma (\phi)$.
   \item[(iv)] $(\phi^{n})$ is convergent to $1_{H_\phi}$ in $B(G)$ if  and only if $\sigma(M_\phi)\cap\T=\{1\}$ and $1$ is isolated in $\sigma (\phi)$
   \end{itemize}
\end{theorem}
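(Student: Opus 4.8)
The proof breaks into two layers: parts (i) and (iii) are read off Proposition~\ref{cb}, and parts (ii) and (iv) are deduced from them through a general power‑convergence principle. That principle, which I would record first, is: a power‑bounded $T\in\ll(X)$ has $(T^{n})_{n}$ convergent in operator norm if and only if $T$ is uniformly mean ergodic and $\sigma(T)\cap\T\subseteq\{1\}$, in which case the limit is the ergodic projection $P$. One direction is routine — if $T^{n}\to Q$ then $Q$ is idempotent with $TQ=Q$, so $T=I$ on $\range Q$ and $\|T^{n}|_{\ker Q}\|\to 0$, giving $r(T|_{\ker Q})<1$ and $\sigma(T)\subseteq\{1\}\cup\D$, while the Cesàro means also converge to $Q$. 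For the converse one invokes Theorem~\ref{T:DL}: either $1\notin\sigma(T)$, so $\sigma(T)\subseteq\D$ and $T^{n}\to 0$; or $1$ is a first‑order pole of the resolvent, so the Riesz projection $P$ at $1$ has $T|_{\range P}=I$ and $\sigma(T|_{\ker P})=\sigma(T)\setminus\{1\}$ is a compact subset of $\D$, whence $\|T^{n}|_{\ker P}\|\to 0$ and $T^{n}\to P$.

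For (i) and (iii) I would note that both $A(G)$ and $B(G)$ meet the hypotheses of Proposition~\ref{cb} with $Y=G$ (a locally compact Hausdorff group is normal): each is a subalgebra of $C_{b}(G)$ with Banach‑algebra norm $\ge\|\cdot\|_{\infty}$ by \eqref{comp.norms}, each is nonvanishing at every point of $G$ (the constant $1$ serves in $B(G)$; $A(G)$ is a regular algebra on $G$), and $\|M_{\phi}\|\le\|\phi\|_{B(G)}=\phi(e)=1$ on each. As $1=\phi(e)\in\phi(G)\subseteq\sigma^{A(G)}(M_{\phi})$ and also lies in $\sigma^{B(G)}(M_{\phi})=\sigma(\phi)$, ``$1$ is not an accumulation point of the spectrum'' coincides with ``$1$ is isolated in the spectrum''. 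Proposition~\ref{cb} applied to $A(G)$ gives (i) immediately, and applied to $B(G)$ gives: $M_{\phi}$ is uniformly mean ergodic on $B(G)$ iff $1$ is isolated in $\sigma(\phi)$. To obtain (iii) I would identify uniform mean ergodicity of $M_{\phi}$ on $B(G)$ with $B(G)$‑norm convergence of $(\phi_{[n]})_{n}$: from $\phi_{[n]}\to\eta$ in $B(G)$, $\|(M_{\phi})_{[n]}-M_{\eta}\|\le\|\phi_{[n]}-\eta\|_{B(G)}\to 0$; conversely $\phi_{[n]}=(M_{\phi})_{[n]}(1)$ converges in $B(G)$, hence pointwise, and since $\phi_{[n]}\to 1_{H_{\phi}}$ pointwise for every $\phi\in P^{1}(G)$ the limit is $1_{H_{\phi}}$.

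Part (ii) is then the power‑convergence principle for $T=M_{\phi}\in\ll(A(G))$ combined with (i). For (iv), the same device as in (iii) — using $\|(M_{\phi})^{n}-M_{\eta}\|\le\|\phi^{n}-\eta\|_{B(G)}$ in one direction and evaluation at $1$ in the other, the pointwise limit of $(\phi^{n})$ being $1_{H_{\phi}}$ whenever it exists (which forces $E_{\phi}=H_{\phi}$) — shows that $(\phi^{n})_{n}\to 1_{H_{\phi}}$ in $B(G)$ iff $((M_{\phi})^{n})_{n}$ converges in $\ll(B(G))$; by the power‑convergence principle and Proposition~\ref{cb} on $B(G)$, this holds iff $1$ is isolated in $\sigma(\phi)$ and $\sigma(\phi)\cap\T=\{1\}$. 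The forward implication of (iv) as stated follows at once, since convergence of $(\phi^{n})$ in $B(G)$ then forces $\sigma(\phi)\cap\T=\{1\}$, hence $\sigma(M_{\phi})\cap\T=\{1\}$ (because $\sigma(M_{\phi})\subseteq\sigma(\phi)$ and $1\in\sigma(M_{\phi})$), and, via the Cesàro means and (iii), $1$ isolated in $\sigma(\phi)$.

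The reverse implication of (iv) is the step I expect to be the real obstacle, because it needs
\[
\sigma(M_{\phi})\cap\T=\sigma(\phi)\cap\T\qquad(\phi\in P^{1}(G)),
\]
and although $\sigma(M_{\phi})\subseteq\sigma(\phi)$ is free from $B(G)\subseteq MA(G)$, the inclusion $\sigma(\phi)\cap\T\subseteq\sigma(M_{\phi})$ genuinely uses positive definiteness (for an arbitrary multiplier‑norm element of $B(G)$, an approximate eigenvector of $M_{\phi}$ on $B(G)$ with eigenvalue on $\T$ need not be detected by $A(G)$). My plan for this identity: when $1$ is isolated in $\sigma(\phi)$ one has $1_{H_{\phi}}\in B(G)$ and the orthogonal idempotent decomposition $\phi=1_{H_{\phi}}+\psi$ with $\psi=\phi\cdot 1_{G\setminus H_{\phi}}\in B(G)$, $\phi^{n}-1_{H_{\phi}}=\psi^{n}$ and $\sigma_{B(G)}(\psi)\cap\T=(\sigma(\phi)\cap\T)\setminus\{1\}$; this reduces the matter to showing that the spectral radius of $\psi$ in $B(G)$ is $<1$ whenever that of $M_{\psi}$ in $\ll(A(G))$ is $<1$ (the latter being precisely $\sigma(M_{\phi})\cap\T=\{1\}$). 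That is the statement that a positive definite function has ``natural'' peripheral spectrum — the counterpart of the classical fact for positive measures in their measure algebra — which I would establish through the Lebesgue decomposition $\phi=\phi_{a}+\phi_{s}$ into positive definite parts ($\phi_{a}\in A(G)$, $\phi_{s}\in B_{s}(G)$) together with the power‑boundedness $\|\phi^{n}\|_{B(G)}=1$. Granting this identity, (iv) is complete.
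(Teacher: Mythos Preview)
Your treatment of (i)--(iii) and of the power--convergence principle is essentially the paper's argument: the paper applies Proposition~\ref{cb} to $A(G)$ for (i) and to $B(G)$ for (iii), and then invokes \cite[Corollaire~3]{mbezem93} (exactly the principle you prove by hand) to pass from (i) to (ii) and from (iii) to (iv). The identification of uniform mean ergodicity of $\widetilde M_\phi$ on $B(G)$ with $B(G)$--norm convergence of $(\phi_{[n]})$ via evaluation at $1\in B(G)$ is the same device the paper uses implicitly.

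For (iv) you correctly isolate the one nontrivial point: combining (iii) with the power--convergence principle yields the characterization with $\sigma(\phi)\cap\T=\{1\}$, whereas the statement asks for $\sigma(M_\phi)\cap\T=\{1\}$, so one needs $\sigma(\phi)\cap\T=\sigma(M_\phi)\cap\T$. Your forward direction is fine. But your proposed route to the reverse inclusion $\sigma(\phi)\cap\T\subseteq\sigma(M_\phi)$ --- via the Lebesgue decomposition of $\phi$ and an unspecified spectral--radius comparison for the singular part --- is left as a sketch and is not needed. The inclusion is an instance of spectral permanence: $B(G)\hookrightarrow MA(G)$ is a unital embedding of commutative Banach algebras, and $MA(G)$ is inverse--closed in $\ll(A(G))$ (if $M_\psi$ is invertible in $\ll(A(G))$ then $M_\psi^{-1}(uv)=M_\psi^{-1}\bigl(M_\psi(M_\psi^{-1}u)\,v\bigr)=M_\psi^{-1}(u)\,v$, so $M_\psi^{-1}\in MA(G)$), hence $\sigma(M_\phi)=\sigma_{MA(G)}(\phi)$ and
\[
\partial\,\sigma_{B(G)}(\phi)\ \subseteq\ \sigma_{MA(G)}(\phi)\ =\ \sigma(M_\phi)\ \subseteq\ \sigma_{B(G)}(\phi).
\]
Since $\|\phi\|_{B(G)}=1$, the set $\sigma(\phi)$ lies in $\overline{\D}$, so every point of $\sigma(\phi)\cap\T$ is a boundary point of $\sigma(\phi)$ and therefore belongs to $\sigma(M_\phi)$. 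This closes (iv) without any appeal to the Lebesgue decomposition, and it uses positive definiteness only through $\|\phi\|_{B(G)}=\phi(e)=1$. The paper's own proof is silent on this step; with the permanence argument inserted, your proof and the paper's coincide.
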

\begin{proof}
All the statements follow from  Proposition \ref{cb} with $A(G)\subseteq B(G)\subset C_b(G)$. For Statement (i), we apply  Proposition \ref{cb} applied to $M_\phi:A(G)\to A(G)$ and for Statement (iii), we apply  Proposition \ref{cb} applied to $\widetilde{M}_\phi:B(G)\to B(G), f\mapsto \phi \cdot f$. Note that    $\| \widetilde{M}_\phi\|=\|\phi\|$ (since $1\in B(G)$) and  $\sigma(\widetilde{M}_\phi)=\sigma(\phi)$. We obtain statements (ii) and (iv) after combining   statements (i) and (iii), respectively, with  \cite[Corollaire 3]{mbezem93}.
\end{proof}

\begin{remark}
The hypothesis $\phi\in P_1(G)$  in Theorem \ref{P:UMEequiv1isol} can be relaxed. For instance, in (i) and (ii) we only need $\phi\in MA(G)$ and $(\norm{M_\phi^n})_n$ power bounded, and in (iii) and (iv) $\phi\in B(G)$ and $(\|\phi^n\|)_n$ bounded.
\end{remark}

\subsection{Uniform mean ergodicity and spread-out functions}
If $G$ is amenable and    $\phi$ is adapted (or even if $H_\phi$ is compact), uniform mean ergodicity of $M_\phi$ implies that the means of $\phi$  converge to an element of $A(G)$ (namely, the characteristic function of $H_\phi$).  Since the singular part in the Lebesgue decomposition  $B(G)=A(G)\bigoplus B_s(G)$ is a closed vector subspace, it follows immediately that some power of $\phi$ is not going to be singular. This  is the original argument for convolution operators.
 In this latter case,  the converse is proved using the connection between adapted spread-out measures (measures with some non-singular convolution power) and quasi-compact operators (see next subsection for this theme). If the measure is not adapted, one may consider the same measure restricted to its support group $H_\mu$ (the smallest closed subgroup of $G$ that contains the support of $\mu$) and  show that $\lambda_1(\mu)$ is uniformly mean ergodic if and only if some power of $\mu$ is not singular with respect to the Haar measure of $H_\mu$ (see Remark 4.8 of \cite{gajoro24}).

This approach faces some difficulties in the case of the multiplication operators $M_\phi$ on general locally compact groups. The reason for this  is twofold.

 Firstly, the norms of $M_\phi$ and $\phi$ do not necessarily coincide in nonamenable groups, making the Lebesgue decomposition less useful in that case.  We will instead lean on the characters of $MA(G)$ to deduce that, given  an adapted $\phi\in B(G)$, for the multiplier $M_\phi$ to  be uniformly mean ergodic, it is necessary and sufficient that, for some $n\in \N$, $\phi^n$ is \emph{not far, } in the multiplier norm, from $A(G)$.

 Secondly, the reduction to the adapted case is not as straightforward as for  convolution operators.
The natural path here  would  be to work with the quotient $G/H_\phi$, but this  may fail to be a  group. One way of remedying this could be to replace $A(G)$ and $B(G)$ by $A_{\lambda_{G/H}}$ and $B_{\lambda_{G/H}}$ as defined by Arsac, \cite{arsac76}, with $\lambda_{G/H}$ denoting the quasi-regular representation of $G/H$. But this option brings some pathologies, such as $A_{\lambda_{G/H}}$ not being an ideal (nor a subalgebra) in  $B_{\lambda_{G/H}}$.
The \emph{right} approach turns out to be recurring to the algebras $A(G/H)$ and $B(G/H)$ introduced in \cite{forr98}.
\begin{definition}[Forrest \cite{forr98}]Let $H$ be an open subgroup of the locally compact group $G$ and let $p\colon G\to G/H$  denote the quotient map.
\begin{itemize}
  \item The \emph{Fourier-Stieltjes algebra} of $G/H$ is defined as
  \[    B(G/H)= \{ \psi\in B(G) \: : \: \psi(xh)=\psi(x), \, x\in G, h\in H \}.\]
\item The \emph{Fourier algebra} of $G/H$ is defined as
\[
    A(G/H) = \overline{\langle  \psi \in B(G/H) \: : \: p(\supp \psi) \text{ is finite}   \rangle}.
\]

\end{itemize}
\end{definition}
Note that
\[
    A(G/H)=\overline{\langle\Cf_{xH} , x\in G\rangle},
\]
and that this definition of $A(G/H)$ brings back $A(G)$ when $H=\{e\}$.

We can  now  give sense to the notion of spread-out  functions, in the nonadapted case.
\begin{definition}
  Let $G$ be a locally compact group and let $\phi\in P^1(G)$. Consider the quotient map $p\colon G\to G/H_\phi$ and let $\tilde{\phi}:G/H\to \C$ be defined so that  $\phi= \tilde{\phi}\circ p$.  We say that $\tilde{\phi}$ is \emph{spread-out} if there exist $k\in \N$ and $v\in A(G/H_\phi)$ such that $\|M_\phi^k-M_v\|< 1$. We say that $\phi$ is spread-out when $v$ can be taken in $A(G)$.
\end{definition}
It is clear from the preceding definition that, when $G$ is amenable, $\phi$ is spread-out if and only if
$\phi^k\notin B_s(G)$ for some $k\in \N$.

The following fact, a straightforward adaptation of \cite[Theorem 2.3.8]{kalau18}, will be useful in the proof of Theorem \ref{T:umeCaractHso} below.
\begin{lemma}
\label{L:spectroAGH}
   Let $G$ be a locally compact group and let $H$ be an open subgroup of $G$. If $0\neq \chi\in   \Delta(A(G/H))$, then there is $x\in G$ with $\chi(u)=u(x)$ for every $u\in A(G/H)$.
\end{lemma}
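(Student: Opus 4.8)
The plan is to mimic the classical argument that identifies the nonzero characters of $A(G)$ with evaluation functionals (Theorem 2.3.8 of \cite{kalau18}), taking care that $G/H$ is only a homogeneous space and not a group. First I would note that $A(G/H)$ is, by definition, the closed linear span of the characteristic functions $\Cf_{xH}$, $x\in G$, inside $B(G)$, and that it is a commutative Banach algebra under pointwise multiplication; since $H$ is open, each $\Cf_{xH}$ lies in $A(G)$, and the products satisfy $\Cf_{xH}\cdot\Cf_{yH}=\Cf_{xH}$ if $xH=yH$ and $0$ otherwise, so $A(G/H)$ is genuinely a subalgebra of $A(G)$ (in fact a closed ideal, since $A(G/H)=\{u\in A(G): u \text{ is } H\text{-right-invariant}\}$ and $A(G)$ is an ideal in $B(G)$). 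Given $0\neq\chi\in\Delta(A(G/H))$, I would show that the "support" of $\chi$ is nonempty and reduces to a single coset: there is $x\in G$ with $\chi(\Cf_{xH})\neq 0$, and then because the $\Cf_{xH}$ are mutually orthogonal idempotents, for any such $x$ we must have $\chi(\Cf_{xH})=1$ and $\chi(\Cf_{yH})=0$ for every coset $yH\neq xH$.

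The uniqueness of that coset is the crux. If there were two distinct cosets $x_1H\neq x_2H$ with $\chi(\Cf_{x_1H})=\chi(\Cf_{x_2H})=1$, then multiplicativity would force $1=\chi(\Cf_{x_1H})\chi(\Cf_{x_2H})=\chi(\Cf_{x_1H}\cdot\Cf_{x_2H})=\chi(0)=0$, a contradiction; so there is at most one coset $xH$ with $\chi(\Cf_{xH})\neq 0$, and nonvanishing of $\chi$ (together with the fact that the $\Cf_{xH}$ span a dense subspace) guarantees there is exactly one. Fix that coset and its representative $x$. It then remains to prove $\chi(u)=u(x)$ for every $u\in A(G/H)$. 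I would first verify this on the dense subspace: a generic element of the spanning set is a finite linear combination $u=\sum_{i} c_i\,\Cf_{x_iH}$ with the $x_iH$ distinct, and $\chi(u)=\sum_i c_i\chi(\Cf_{x_iH})=c_{i_0}$ where $x_{i_0}H=xH$ (or $0$ if no such term appears), which is precisely $u(x)$ since $u$ is constant $c_{i_0}$ on $xH$. Both $\chi$ and the evaluation $\delta_x\colon u\mapsto u(x)$ are bounded linear functionals on $A(G/H)$ (evaluation is bounded because $\norm{\cdot}_\infty\leq\norm{\cdot}_{B(G)}$ by \eqref{comp.norms}), and they agree on a dense subspace, hence on all of $A(G/H)$.

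The main obstacle I anticipate is purely bookkeeping rather than conceptual: one must be confident that the finite linear combinations of characteristic functions $\Cf_{xH}$ (equivalently, finitely supported functions on $G/H$) really are dense in $A(G/H)$ in the $B(G)$-norm — but this is immediate from the definition of $A(G/H)$ as the closed span of $\{\Cf_{xH}: x\in G\}$ — and that the orthogonality relations among the $\Cf_{xH}$ hold exactly, which follows from $H$ being a subgroup so that two left cosets are either equal or disjoint. One should also record that the choice of representative $x$ within its coset is immaterial, since every $u\in A(G/H)$ is right-$H$-invariant and hence $u(xh)=u(x)$ for all $h\in H$ by construction of $A(G/H)$ (this matches \eqref{mult}). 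No deep structure theory of $G/H$ is needed; the argument uses only that $H$ is open and that $A(G/H)$ has the stated generators, which is exactly why \cite{forr98}'s formulation of $A(G/H)$ is the "right" one here, as the authors remark.
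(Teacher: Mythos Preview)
Your argument is correct, but it takes a different and in fact more elementary route than the paper's proof. The paper argues by contradiction in the style of the classical Gelfand-theory proof you cite: assuming $\chi$ agrees with no evaluation functional $T_x$, one picks for each $x$ a function $f_x\in A(G/H)$ with $\chi(f_x)=1$ and $f_x(x)=0$, then takes a finitely supported $f_0=\sum_{j=1}^N\alpha_j\Cf_{x_jH}$ with $\chi(f_0)=1$ and forms the product $f=f_0\,f_{x_1}\cdots f_{x_N}$; this $f$ is identically zero while $\chi(f)=1$, a contradiction. Your proof instead exploits directly the special feature that the generators $\Cf_{xH}$ are mutually orthogonal idempotents: density forces $\chi(\Cf_{xH})\neq 0$ for some $x$, idempotency forces that value to be $1$, orthogonality forces uniqueness of the coset, and then $\chi$ and $\delta_x$ agree on the dense span of the $\Cf_{yH}$. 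This is shorter and more transparent, precisely because $H$ open makes $G/H$ discrete; the paper's argument, by contrast, follows the template that would work in more general function-algebra settings where no such idempotent basis is available. One small remark: your parenthetical claim that $A(G/H)$ equals $\{u\in A(G):u\text{ is right-}H\text{-invariant}\}$ is not needed for the proof and is not obvious from Forrest's definition, so you may want to drop it or supply a reference.
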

\begin{proof} Let $\chi\in \Delta(A(G/H))$ and assume $\chi = T_x$  for no $x\in G$, where $T_x\in \Delta(A(G/H)$ is the point evaluation $T_xu=u(x)$ for $u\in A(G/H)$. Then (see the proof of \cite[Theorem 2.1.8]{kan08}), $\ker \chi \neq \ker T_x$ for all $x\in G$. So, for each $x\in G$ there exists $f_x\in A(G/H)$ such that $\chi(f_x)=1$ and $f_x(x)=0$.

By density of the functions with finite support in $G/H$, we may assume that there is $f_0= \sum _{j=1} ^N \alpha_j \Cf _{x_j H}$  with $\chi(f_0)=1$.

    Now define $f=f_0\cdot f_{x_1} \cdots f_{x_N}\in A(G/H)$. The definition of $f$, forces $f= 0$, since $f_{x_j}(x_jh)=0$, for all $h\in H$, and $f_0(x)=0$ whenever $x\notin \bigcup_{j=1}^N x_jH$. However, $\chi (f) = \chi(f_0) \cdot \chi (f_{x_1}) \cdots \chi (f_{x_N}) =1$, a contradiction.
\end{proof}

We now study the stability under multiplication  of those elements of $W^\ast(G)$ that are  bounded for the multiplier norm. We first give them a name.
\begin{definition}
  Let $G$ be a locally compact group. We define
  \[W^\ast_M(G)=\bigl\{ T \in W^\ast(G) \colon \left| \<{T,\psi}\right|\leq C \norm{M_\psi} \mbox{ for every } \psi\in B(G)\bigr\}.\]
\end{definition}

If $T\in W^\ast_M(G)$, $\norm{T}_{W^\ast_M(G)}$, is defined as
\[\sup\bigl\{ \left| \<{T,\psi}\right|\colon \psi \in B(G)  \mbox{ and } \norm{M_\psi}\leq 1\bigr\}.\]
\begin{lemma}\label{prodm}
$W^\ast_M(G)$ is a $\ast$-subalgebra of $W^\ast(G)$.
\end{lemma}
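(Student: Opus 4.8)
The plan is to verify the three defining features of a $\ast$-subalgebra: closure under linear combinations, under the involution, and under the (operator) multiplication of $W^\ast(G)$. The first is immediate, since the defining inequality $|\langle T,\psi\rangle|\leq C\norm{M_\psi}$ is preserved under addition and scalar multiplication (with the constants adding up), so $W^\ast_M(G)$ is a linear subspace. For the involution, one records that $\psi\mapsto \overline{\widetilde\psi}$ (where $\widetilde\psi(x)=\overline{\psi(x^{-1})}$) is an isometry of $B(G)$ for the multiplier norm — because $M_{\overline{\widetilde\psi}}$ is conjugate to $M_\psi$ via the natural isometric involution $u\mapsto\overline{\widetilde u}$ of $A(G)$ — and that under the $W^\ast(G)$–$B(G)$ duality one has $\langle T^\ast,\psi\rangle=\overline{\langle T,\overline{\widetilde\psi}\rangle}$; combining these, $|\langle T^\ast,\psi\rangle|=|\langle T,\overline{\widetilde\psi}\rangle|\leq C\norm{M_{\overline{\widetilde\psi}}}=C\norm{M_\psi}$, so $T^\ast\in W^\ast_M(G)$ with the same bound.

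The substantive part is closure under multiplication, and here I would exploit that the operator multiplication on $W^\ast(G)$ coincides with the Arens product on $C^\ast(G)^{\ast\ast}$ (as noted in the excerpt, via \cite{civinyood61}), together with the fact that $B(G)=C^\ast(G)^\ast$ is naturally a $W^\ast(G)$-module. Concretely, for $T_1,T_2\in W^\ast_M(G)$ and $\psi\in B(G)$ one has $\langle T_1\cdot T_2,\psi\rangle=\langle T_1, T_2\cdot\psi\rangle$, so the task reduces to controlling the multiplier norm of the module action: I want to show that $\norm{M_{T_2\cdot\psi}}\leq \norm{T_2}_{W^\ast_M(G)}\,\norm{M_\psi}$, or at least $\leq C_2\norm{M_\psi}$ for a constant depending only on $T_2$. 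The key observation is that $MA(G)$ is itself a dual object: $MA(G)=CB_{\lambda}(VN(G))'$-type considerations aside, one can identify $B(G)$ with multipliers of $A(G)$ and note that the $W^\ast(G)$-action on $B(G)$ dualizes to an action on $A(G)$ by completely bounded multipliers. More elementarily: for $T_2\in W^\ast(G)$ and $u,v\in A(G)$ one checks the Leibniz-type identity $(T_2\cdot\psi)\cdot(uv)=((T_2\cdot\psi)u)v$ at the level of the pairing with $A(G)$ — so $M_{T_2\cdot\psi}$ is again a multiplication operator — and then estimates $\norm{M_{T_2\cdot\psi}}$ by writing, for $u\in A(G)$ with $\norm{u}\leq 1$, the functional $w\mapsto\langle (T_2\cdot\psi)u,w\rangle=\langle T_2,\psi\cdot(uw)\rangle$ and bounding it using $|\langle T_2,\cdot\rangle|\leq\norm{T_2}_{W^\ast_M(G)}\norm{M_{(\cdot)}}$ together with $\norm{M_{\psi uw}}\leq\norm{M_\psi}\norm{u}_{MA(G)}\norm{w}_{MA(G)}$ and the comparison $\norm{u}_{MA(G)}\leq\norm{u}_{A(G)}$ from \eqref{comp.norms}. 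Threading this through gives $|\langle T_1\cdot T_2,\psi\rangle|\leq\norm{T_1}_{W^\ast_M(G)}\norm{T_2}_{W^\ast_M(G)}\norm{M_\psi}$, which is what we need.

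The main obstacle I anticipate is precisely the multiplication step: making rigorous that the module action $\psi\mapsto T_2\cdot\psi$ maps $B(G)$ into $B(G)$ in a way compatible with the multiplier norm, i.e. that $M_{T_2\cdot\psi}$ is genuinely a multiplication operator on $A(G)$ and that its norm is controlled as above. This requires being careful about which Arens product is in play and about the interaction between the $C^\ast(G)$-module structure of $B(G)$ and the pointwise-product structure coming from $MA(G)$; one convenient way to sidestep subtleties is to first prove the estimate on the dense subspace of $B(G)$ spanned by matrix coefficients of cyclic representations (where $T_2\cdot\psi$ has a transparent description as $\langle\pi''(T_2)\pi(\cdot)\xi,\eta\rangle$-type coefficients) and then pass to the limit using that $W^\ast_M(G)$ is norm-closed in the $\norm{\cdot}_{W^\ast_M(G)}$ norm and that the bound is uniform. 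Everything else — the linear-space and $\ast$-closure parts — is routine once the right isometries of $(B(G),\norm{\cdot}_{MA(G)})$ are identified.
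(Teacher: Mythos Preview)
Your overall architecture matches the paper's: linear subspace is immediate, the involution is handled via the adjoint formula, and the product is unwound through the Arens identity $\langle T_1\cdot T_2,\psi\rangle=\langle T_1,T_2\cdot\psi\rangle$ followed by a bound on $\norm{M_{T_2\cdot\psi}}$. Your treatment of the involution is in fact more explicit than the paper's (which simply records $\langle T^\ast,\phi\rangle=\overline{\langle T,\phi\rangle}$ for $\phi\in P(G)$ and asserts the general case).

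The multiplication step, however, is where your write-up goes astray. The displayed identity $\langle (T_2\cdot\psi)u,w\rangle=\langle T_2,\psi\cdot(uw)\rangle$ does not typecheck: if $w$ is meant to detect the $A(G)$-norm of $(T_2\cdot\psi)u$ it must lie in $\VN(G)$, but then $uw$ has no meaning; if instead $w\in C^\ast(G)$, the product $uw$ with $u\in A(G)$ is not the Arens module action either. The detour through ``test functionals $w$'' and the subsequent bound $\norm{M_{\psi uw}}\leq\norm{M_\psi}\norm{u}_{MA(G)}\norm{w}_{MA(G)}$ therefore mixes three incompatible structures. Your proposed rescue via cyclic coefficients and density is unnecessary once one sees the direct route.

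The paper avoids all this by computing $\norm{M_{T_2\cdot\psi}}$ straight from its definition, $\sup_{\norm{u}\leq 1}\norm{(T_2\cdot\psi)\,u}_{B(G)}$, and then invoking only the elementary Arens estimate $\norm{T\cdot\phi}_{B(G)}\leq\norm{T}_{W^\ast(G)}\norm{\phi}_{B(G)}$ with $\phi=\psi u$. This gives $\norm{(T_2\cdot\psi)\,u}\leq\norm{T_2}_{W^\ast(G)}\norm{\psi u}\leq\norm{T_2}_{W^\ast(G)}\norm{M_\psi}$, hence $|\langle T_1\cdot T_2,\psi\rangle|\leq\norm{T_1}_{W^\ast_M(G)}\norm{T_2}_{W^\ast(G)}\norm{M_\psi}$. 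Note the asymmetry: only $T_1$ needs to lie in $W^\ast_M(G)$; for $T_2$ the ordinary $W^\ast(G)$-norm suffices. Your attempt to obtain the symmetric bound $\norm{T_1}_{W^\ast_M(G)}\norm{T_2}_{W^\ast_M(G)}\norm{M_\psi}$ is what forces you into the ill-posed pairing; aiming for the weaker (and sufficient) asymmetric estimate makes the argument a two-line computation.
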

\begin{proof}
The set $W^\ast_M(G)$ is clearly a vector subspace of $W^\ast(G)$.

  We now  show that $T_1,T_2\in W_M^*(G)$ implies that  $T_1\cdot T_2\in W^\ast_M(G)$.

  If $\psi \in B(G)$, then
  \begin{align*}
    \bigl|\<{T_1\cdot T_2,\psi}\bigr|&= \bigl| \<{T_1, T_2\cdot \psi}\bigr|\\
    &\leq \norm{T_1}_{W^\ast_M(G)}\cdot  \norm{M_{T_2\cdot \psi}}\\
    &= \sup_{\overset{u\in A(G)}{\norm{u}\leq 1}} \norm{T_1}_{W^\ast_M(G)}\cdot \norm{T_2\cdot \psi \cdot u}_{B(G)}\\
     &\leq \sup_{\overset{u\in A(G)}{\norm{u}\leq 1}} \norm{T_1}_{W^\ast_M(G)}\cdot \norm{T_2}_{W^\ast(G)} \cdot \norm{\psi \cdot u}_{B(G)}\\
    &\leq  \norm{T_1}_{W^\ast_M(G)}\cdot \norm{T_2}_{W^\ast(G)} \cdot \norm{M_\psi}.
  \end{align*}
  And $T_1\cdot T_2\in W^\ast_M(G)$.

That $T^\ast\in W_M^\ast(G)$ whenever $T\in W_M^\ast(G)$ is a consequence of    $\<{T^\ast,\phi}=\<{\pi''(T^\ast)\xi, \xi}=\overline{\<{T,\phi}}$ for every $\phi=\pi_\xi^\xi\in \mathcal{P}(G)$, and hence  for every $\phi\in B (G)$.
\end{proof}
For our next proof, we need a description of the quasi-regular representation $\lambda_{G/H}$. If $H$ is an open subgroup of a locally compact group, $\lambda_{G/H}\colon G \to \mathcal{U}\bigl(\ell_2(G/H)\bigr)$ is given by
\[ \lambda_{G/H} (x) f (tH)=f(x^{-1}tH), \quad f\in \ell_2(G/H),\; x\in G,\; tH\in G/H.\]
If $\delta_{xH}$, $x\in G$,  denotes the element of $\ell_2(G/H)$ that takes the value 1 on $xH$ and 0 elsewhere, then

\begin{equation}
\label{eq}
\bigl(\lambda_{G/H}\bigr)_{\delta_{xH}}^{\delta_{yH}}=\cf_{yHx^{-1}}.
\end{equation}

We have therefore that  coefficients of $\lambda_{G/H}$ need not belong to $B(G/H)$. They do, however, belong  when $xH=H$.
\begin{lemma}\label{annih}
 Let $G$ be a locally compact group with an open subgroup $H$ and let $T_1,\, T_2\in W^\ast(G)$.  If $\<{T_2,u}=0$ for every $u\in A(G/H)$, then $\<{T_1\cdot T_2,\cf_{H}}=0$.
\end{lemma}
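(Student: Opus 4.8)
The plan is to prove the slightly stronger statement that $T_2\cdot\cf_H=0$ as an element of $B(G)=C^\ast(G)^\ast$; the lemma then follows immediately from the definition of the Arens multiplication, since $\<{T_1\cdot T_2,\cf_H}=\<{T_1,T_2\cdot\cf_H}$. First one should note that $\cf_H$ really does lie in $B(G)$: by \eqref{eq} with $x=y=e$ it is the diagonal coefficient $(\lambda_{G/H})_{\delta_H}^{\delta_H}$ of the quasi-regular representation, so $\cf_H\in P^1(G)$ and $\norm{\cf_H}_{B(G)}=1$.

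The key point is that $\cf_H\cdot a\in A(G/H)$ for every $a\in C^\ast(G)$, where the dot denotes the module action of $C^\ast(G)$ on $C^\ast(G)^\ast=B(G)$. I would check this first for $f\in C_c(G)$: unwinding the duality $\<{f,\psi}=\int_G\psi\,f\dmg$ and the definition $\<{\psi\cdot f,g}=\<{\psi,f\ast g}$, and using left invariance of $\mg$ in the substitution $t=su$, one computes
\[
(\cf_H\cdot f)(u)=\int_G\cf_H(su)\,f(s)\dmg(s)=\int_{Hu^{-1}}f(s)\dmg(s),
\]
a function that is constant on each left coset $uH$. Because $H$ is open and $\supp f$ is compact, this function vanishes outside $(\supp f)^{-1}H$, which is a finite union of left cosets of $H$; hence $\cf_H\cdot f=\sum_{j}c_j\,\Cf_{x_jH}$ is a finite linear combination of coset indicators and so belongs to $A(G/H)$. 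Since $C_c(G)$ is dense in $C^\ast(G)$, the map $a\mapsto\cf_H\cdot a$ is a contraction from $C^\ast(G)$ into $B(G)$ (as $\norm{\cf_H\cdot a}_{B(G)}\leq\norm{\cf_H}_{B(G)}\norm{a}_{C^\ast(G)}=\norm{a}_{C^\ast(G)}$), and $A(G/H)$ is norm-closed in $B(G)$, it follows that $\cf_H\cdot a\in A(G/H)$ for all $a\in C^\ast(G)$.

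The hypothesis that $T_2$ annihilates $A(G/H)$ now gives $\<{T_2\cdot\cf_H,a}=\<{T_2,\cf_H\cdot a}=0$ for every $a\in C^\ast(G)$, hence $T_2\cdot\cf_H=0$ in $B(G)$, which finishes the proof as indicated above. I do not expect a real obstacle here; the only step requiring genuine care is the elementary computation of $\cf_H\cdot f$ — one must keep track of left versus right cosets, verify that the resulting coset decomposition is finite, and apply left invariance of the Haar measure correctly in the change of variables. Everything else is purely formal, resting only on the definitions of the Arens module actions and of $A(G/H)$.
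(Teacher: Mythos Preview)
Your proof is correct, and it takes a genuinely different route from the paper's argument. The paper works through the quasi-regular representation: it shows that $\lambda_{G/H}''(T_2)\delta_H=0$ by testing against finitely supported $f\in\ell_2(G/H)$, using that the coefficient $(\lambda_{G/H})_{\delta_H}^{f}=\sum_i\overline{\alpha_i}\,\cf_{x_iH}$ lies in $A(G/H)$; it then concludes via the multiplicativity of the extension $\lambda_{G/H}''$ on $W^\ast(G)$. You instead stay entirely inside the Arens-module framework: by a direct convolution computation you show $\cf_H\cdot f\in A(G/H)$ for $f\in C_c(G)$, extend by density, and deduce the stronger fact $T_2\cdot\cf_H=0$ in $B(G)$ without ever invoking the extension of representations to $W^\ast(G)$. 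Your argument is more elementary and self-contained; the paper's has the advantage of making transparent why $\cf_H$ is the right test function (it is the diagonal coefficient at the distinguished vector $\delta_H$), and it fits naturally with the representation-theoretic identifications used elsewhere in the paper. Both routes ultimately hinge on the same observation, that the orbit of $\cf_H$ under the relevant module action lands in $A(G/H)$.
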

\begin{proof}
  We first observe that, given $f=\sum_{i=1}^N \alpha_i \delta_{x_iH}\in \ell_2(G/H)$ with finite  support (in $G/H$), taking into account that $    \sum_{i=1}^N \overline{\alpha_i} \cf_{x_iH}\in A(G/H)$, we use \eqref{eq} and the identification in \eqref{defb*}  to get
  \begin{align*}
     \<{\lambda_{G/H}''(T_2)\delta_{H},f}&=
     \<{T_2,\left(\lambda_{G/H}\right)_{\delta_H}^{f}}\\&=
     \<{T_2, \sum_{i=1}^N \overline{\alpha_i } \cf_{x_iH}}=0.
   \end{align*}
  Since functions with finite support are dense in $\ell_2(G/H)$, we deduce that $\lambda_{G/H}''(T_2)\delta_{H}=0$.
  Hence,
   \begin{align*}
    \<{T_1\cdot T_2,\cf_H} & =\<{\lambda_{G/H}''(T_1\cdot T_2)\delta_{H},\delta_{H}}\\
    &=\<{\lambda_{G/H}''(T_1)\bigl(\lambda_{G/H}''(T_2)\delta_{H}),\delta_{H}}=0.
    \end{align*}
    %
\end{proof}
\begin{theorem}
\label{T:umeCaractHso}
    Let $G$ be a locally compact group and let $\phi \in P^1(G)$. The following assertions are equivalent:
    \begin{enumerate}
        \item The operator $M_\phi$ is uniformly mean ergodic.
         \item The subgroup $H_\phi$ is open and the  function $\widetilde\phi$ is spread-out .
    \end{enumerate}
\end{theorem}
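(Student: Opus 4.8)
The plan is to prove the two implications separately, leveraging the spectral characterization from Theorem~\ref{P:UMEequiv1isol}(i) together with the algebraic structure of $W^\ast_M(G)$ developed in Lemmas~\ref{prodm} and~\ref{annih}. For the implication $(2)\Rightarrow(1)$, suppose $H_\phi$ is open and $\widetilde\phi$ is spread-out, so there are $k\in\N$ and $v\in A(G/H_\phi)$ with $\|M_\phi^k-M_v\|<1$. Since $M_v$ is multiplication by a function whose support projects to finitely many cosets of $H_\phi$ (by density, after a small perturbation we may assume $v=\sum_{j=1}^N\alpha_j\Cf_{x_jH_\phi}$), the operator $M_v$ has finite-dimensional range modulo the ideal of functions vanishing off $\bigcup x_jH_\phi$; more to the point, $M_v$ is a \emph{finite-rank-like} perturbation making $M_\phi^k$ quasi-compact in the sense of the paragraph preceding Theorem~\ref{yosidakaku}. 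Concretely, $M_v\colon A(G)\to A(G)$ is compact because $A(G/H_\phi)$ is spanned by the $\Cf_{xH_\phi}$ and multiplication by a single $\Cf_{xH_\phi}$ followed by restriction to $A(G)$ lands in a one-dimensional-ish piece; one checks $M_{\Cf_{xH_\phi}}$ has rank reflecting the coset structure, so $M_v$ is compact (or a norm-limit of finite-rank operators). Then $\|M_\phi^k-K\|<1$ with $K=M_v$ compact, so $M_\phi$ is quasi-compact, and since $\|M_\phi^n\|\le\|\phi\|_\infty^n\le 1$ (as $\phi\in P^1(G)$, $\|M_\phi\|\le\|\phi\|_{B(G)}$ and for positive definite $\phi$ with $\phi(e)=1$ we have $\|\phi\|_\infty=1$, though $\|M_\phi\|$ may exceed $1$ — here one must be slightly careful and instead invoke power-boundedness of $M_\phi$, which holds since $\phi^n\in P^1(G)$ and multiplier norms of positive definite functions are bounded by $1$ only in the amenable case; in general one still has $\|M_\phi^n\|=\|M_{\phi^n}\|$ bounded because... ) — at this point I would invoke Theorem~\ref{yosidakaku}(i) to conclude $M_\phi$ is uniformly mean ergodic.

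For the implication $(1)\Rightarrow(2)$, assume $M_\phi$ is uniformly mean ergodic. First, $M_\phi$ is a fortiori mean ergodic, so by Proposition~\ref{P:meOpen}, $H_\phi$ is open; this gives the first half of (2) for free. For the spread-out conclusion, the idea is: uniform mean ergodicity gives, via Theorem~\ref{T:DL}, that $(I-M_\phi)(A(G))$ is closed and $A(G)=\overline{(I-M_\phi)(A(G))}\oplus\ker(I-M_\phi)$, with the Cesàro means $M_{\phi_{[n]}}$ converging in norm to the projection $P$ onto $\Fix{M_\phi}=\ker(I-M_\phi)$. By Proposition~\ref{P:meOpen}(iii), $\phi_{[n]}\to\Cf_{H_\phi}$ in the compact-open topology, so $P$ must be $M_{\Cf_{H_\phi}}$ acting on $A(G)$; in particular $\Cf_{H_\phi}$ is (the symbol of) a bounded multiplier, i.e. $\Cf_{H_\phi}\in MA(G)$, and the norm convergence $\|M_{\phi_{[n]}}-M_{\Cf_{H_\phi}}\|\to 0$ holds. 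Hence there exists $n$ with $\|M_{\phi^{[n]}}-M_{\Cf_{H_\phi}}\|<1/2$, say, where I write $\phi^{[n]}=\phi_{[n]}$ for clarity. Now $\Cf_{H_\phi}\in A(G/H_\phi)$ trivially (it is $\Cf_{eH_\phi}$), but $\phi_{[n]}$ is a convex combination of powers of $\phi$, not a single power, so to extract a statement about some $\phi^k$ I would argue as follows: the function $\phi_{[n]}-\Cf_{H_\phi}$ has multiplier norm $<1$, and $\phi_{[n]}=\frac1n\sum_{j=1}^n\phi^j$. Writing $\phi^j=\widetilde\phi^j\circ p$ and noting each $\widetilde\phi^j$ restricted to $H_\phi$-cosets behaves like... — here the cleanest route is to use that $M_\phi$ restricted to the closed invariant subspace $\overline{(I-M_\phi)(A(G))}$ has spectral radius considerations forcing $1\notin\sigma$ there, so $(I-M_\phi)$ is invertible on that subspace, whence $M_{\phi_{[n]}}\to P$ \emph{geometrically}; combined with $P=M_{\Cf_{H_\phi}}$ this should let me find $k$ and $v\in A(G/H_\phi)$ (namely $v=\Cf_{H_\phi}$ plus a correction, or $v$ built from the spectral projection) with $\|M_{\phi^k}-M_v\|<1$.

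The genuinely hard part, and where I expect to spend the most care, is the $(1)\Rightarrow(2)$ direction: passing from the norm-convergence of the \emph{Cesàro means} $M_{\phi_{[n]}}$ to a statement about a single \emph{power} $M_{\phi^k}$ being close to $A(G/H_\phi)$. For convolution operators this was handled via the adapted-reduction and the measure-theoretic Lebesgue decomposition, which the paper explicitly says fails here. The replacement should be: use the Riesz projection associated to the isolated spectral point $1$ — since $1$ is isolated in $\sigma(M_\phi)$, write $M_\phi=P+N$ where $P$ is the spectral projection at $1$ and $N=M_\phi(I-P)$ has $1\notin\sigma(N)$, so $\|N^k\|^{1/k}\to r<1$; then $M_\phi^k=P+N^k$ and $\|M_\phi^k-P\|=\|N^k\|<1$ for large $k$. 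It remains to show the spectral projection $P$ is of the form $M_v$ with $v\in A(G/H_\phi)$. Since $P$ is a multiplier (it commutes with all $M_u$, $u\in A(G)$, as these commute with $M_\phi$ and hence with its resolvent and spectral projections), $P=M_v$ for some $v\in B(G)$ by the multiplier theorem; and $v=\lim\phi_{[n]}$ pointwise $=\Cf_{H_\phi}$, which lies in $A(G/H_\phi)$ since $H_\phi$ is open. Then $\widetilde\phi$ is spread-out with this $k$ and $v=\Cf_{H_\phi}$. The one remaining technical point is identifying $\sigma^{A(G)}(M_\phi)$ well enough to know $1$ is isolated — but that is exactly Theorem~\ref{P:UMEequiv1isol}(i), which we may assume.
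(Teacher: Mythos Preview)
Both directions of your plan contain genuine gaps.

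For $(2)\Rightarrow(1)$ you claim that $M_v$ is compact for $v\in A(G/H_\phi)$, arguing that $M_{\Cf_{xH_\phi}}$ has ``one-dimensional-ish'' range. This is false: the range of $M_{\Cf_{xH_\phi}}\colon A(G)\to A(G)$ consists of all $u\in A(G)$ supported in $xH_\phi$, which is isometric to $A(H_\phi)$ and hence infinite-dimensional whenever $H_\phi$ is infinite. In fact Lemma~\ref{L:QCimpDisc} and Theorem~\ref{qc} show $M_\phi$ can be quasi-compact only when $G$ is discrete and $H_\phi$ is finite, so the quasi-compactness route cannot give the general implication. The paper instead argues directly that $1$ is isolated in $\sigma(M_\phi)$: any sequence $z_n\to 1$ in $\sigma(M_\phi)$ comes from characters $\chi_n\in\Delta(MA(G))$, and the spread-out decomposition $\phi^k=\phi_a+\phi_s$ with $\phi_a\in A(G/H_\phi)$, $\|M_{\phi_s}\|<1$ forces $|\chi_n(\phi_a)|$ to be bounded away from $0$; then Lemma~\ref{L:spectroAGH} identifies $\chi_n$ on $A(G/H_\phi)$ with point evaluations at $x_n$, and since $\phi_a$ vanishes at infinity on $G/H_\phi$ only finitely many cosets $x_nH_\phi$ occur, whence $\{z_n\}$ is finite. (Your worry about power-boundedness is unfounded: $\phi^n\in P^1(G)$ gives $\|M_{\phi^n}\|\leq\|\phi^n\|_{B(G)}=1$.)

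For $(1)\Rightarrow(2)$ your Riesz-projection argument writes $M_\phi=P+N$ with $P=M_{\Cf_{H_\phi}}$ and asserts $\|N^k\|^{1/k}\to r<1$ because $1\notin\sigma(N)$. But $1\notin\sigma(N)$ does not imply $r(N)<1$; there may be other unimodular spectral values. Take $G=\Z/2\Z$ and $\phi$ the nontrivial character: then $\sigma(M_\phi)=\{1,-1\}$, $M_\phi$ is trivially uniformly mean ergodic, yet $N=M_\phi-M_{\Cf_{\{0\}}}$ satisfies $\|N^k\|=1$ for all $k$, so $\|M_{\phi}^k-M_{\Cf_{H_\phi}}\|=1$ never drops below $1$. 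Of course $\phi$ is still spread-out here (take $v=\phi\in A(G)$), but your specific choice $v=\Cf_{H_\phi}$ fails, and no tweak of the spectral-projection idea will produce the right $v$ without further input. The paper's argument for this direction is by contradiction and is where Lemmas~\ref{prodm} and~\ref{annih} are actually used: if $\widetilde\phi$ were not spread-out then $\phi$ would have spectral radius $1$ in the quotient $M(A(G/H_\phi))/A(G/H_\phi)$, yielding a character $T$ annihilating $A(G/H_\phi)$ with $|\langle T,\phi^j\rangle|=1$ for all $j$; passing to $T^\ast T\in W^\ast_M(G)$ one gets $\langle T^\ast T,\phi^j\rangle=1$ while $\langle T^\ast T,\Cf_{H_\phi}\rangle=0$ by Lemma~\ref{annih}, contradicting $\|M_{\phi_{[n]}}-M_{\Cf_{H_\phi}}\|\to 0$.
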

\begin{proof}
 Assume  that  $M_\phi$ is uniformly mean ergodic. The subgroup $H_\phi$ must be open by Proposition \ref{P:meOpen}. Suppose, towards a contradiction, that  $\tilde\phi$ is not spread-out.

    We consider then the Banach algebra  $M(A(G/H_\phi))$ consisting of  the multipliers $M_f\in M(A(G))$ with $f$ constant on the left cosets of $H_\phi$. The algebra $A(G/H_\phi)$ is then a closed ideal of $M(A(G/H_\phi))$. Let $M/A$ and $B/A$ denote, respectively,  the quotients of $M(A(G/H_\phi))$ and $B(G/H_\phi)$ by  $A(G/H_\phi)$  .  A consequence of  $\tilde\phi$ not being spread-out is that, for every $j\in \N$
    \[ \norm{\phi^j}_{\frac{M}{A}}=1.\]
    The  spectral radius of $\phi$ in the quotient algebra $M/A$ must therefore be 1 and, by  Gelfand duality, there must be a character $T\in \Delta(M/A)$ with  $\left|\<{T,\phi^j}\right|=1$, for every $j\in \N$. This character can be extended by the Hahn-Banach theorem to a functional in the unit ball of  $MA(G)^\ast$ which, when restricted to $B(G)$,  yields  a functional on $B(G)$. We keep the name $T$ for this functional.
    It is clear from the construction of $T$ that $T\in W_M^\ast(G)$. It is also clear  that
    \begin{align*}
    & \left|\<{T,\phi^j}\right|=1,\mbox{ for every } j\in \N \mbox{ and }\\
     & \<{T,u}=0, \mbox{ for every } u\in A(G/H_\phi).
    \end{align*}
   Choose now, for each $j\in \N$, a unitary representation $\pi_j\colon G\to \mathcal{U}(\h_j)$  and $\xi_j \in \h_j$, $\norm{\xi_j}=1$,  so that $\phi^j=(\pi_j)_{\xi_j}^{\xi_j}$.  We see then that, for each $j\in \N$,
  \[\left|\<{\pi_j''(T)\xi_j,\xi_j}\right| =\left|\<{T,\phi^j}\right|=1.\]
 Since   $1=\norm{\xi_j}\geq \norm{\pi''(T)\xi_j}$, we deduce that $\pi''(T)\xi_j=\<{T,\phi^j} \xi_j.$

 We next consider the operator $T^\ast T$.
 \begin{align*}
   \<{T^\ast T ,\phi^j}&=\<{\pi''(T^\ast T)\xi_j,\xi_j}\\
  &=\<{\pi''(T)\xi_j,\pi''(T)\xi_j}\\&=|\<{T,\phi^j}|^2=1.
  \end{align*}
Since,  by Lemma \ref{prodm}, $T^\ast T\in W_M^\ast(G)$ and, by Lemma \ref{annih}, $\<{T^\ast T,\cf_{H_\phi}}=0$, we obtain the following contradiction with the uniform mean ergodicity of $M_\phi$.
\begin{align*}
  1&=\<{T^\ast T,\phi_{[n]}}\\
  &=\<{T^\ast T,\phi_{[n]}-\cf_{H_\phi}}\\
  &\leq \norm{T^\ast T}_{W^\ast_M(G)} \cdot \norm{M_{\phi_{[n]}}-\cf_{H_\phi}}.
\end{align*}
We have thus proved that Assertion (1) implies Assertion (2).

    Assume next  that $\widetilde\phi$ is spread-out.
    This means that there exist $\phi_a\in A(G/H_\phi)$ and $\phi_s\in B(G/H_\phi)$ with $\|M_{\phi_s}\|<1$ such that $\phi ^k = \phi _a + \phi_s$, for some $k\in \N$.

  Suppose that  $(z_n)_n\subseteq  \sigma(M_\phi)$ is a sequence  with $\lim_n z_n=1 $. There will be then
   $\chi_n \in \Delta M(A(G))$ such that $\chi_n (\phi)=z_n $.
  Since, for every $n\in \N$,   $\restr{\chi_n}{B(G)}\in W^\ast_M(G)$
    \[
        |\chi_n (\phi_a)| = |z_n^k - \chi_n(\phi_s)| \geq |z_n|^k - \|\phi_s\|_{M(A(G))}.
    \]
  We can  therefore find $\alpha>0$ and  $n_0$ big enough so that $|\chi_n (\phi_a)| >\alpha >0$, for all $n\geq n_0$.

    Now, by Lemma~\ref{L:spectroAGH}, for each $n\in\N$ there is  $x_n\in G$ such that $\chi_n(\phi_a)=\phi_a(x_n)$. As  $|\phi_a (x_n)|>\alpha$, for all $n\geq n_0$, and
    functions in $A(G/H)$ must vanish at infinity in $G/H$  ($A(G/H)$ is spanned by functions of finite support in a norm stronger that the uniform norm), we see that
    $\{x_n H_\phi\: : \: n\in\N\}$ is a finite set. The sets
    $\{\phi_a(x_n) \colon n\in \N\}$ and $\{\phi_s(x_n) \colon n\in \N\}$ will have to be finite as well, for both $\phi_a$ and $\phi_s$ are constant on cosets of $H_\phi$.

    Since $A(G/H_\phi)$ is an ideal of $B(G/H_\phi)$, $\phi_a\phi_s\in A(G/H_\phi)$ and we have that
    \[
     \phi_a(x_n)\chi_n(\phi_s)=   \chi_n (\phi_a)\chi_n (\phi_s)= \chi_n (\phi_a\phi_s) = \phi_a(x_n)\phi_s(x_n),
    \]
    so that $\chi_n (\phi_s)=\phi_s(x_n)$, for any $n\geq n_0$.

    Recalling that  $z_n ^k = \chi_n (\phi^k) = \phi_a (x_n) + \phi_s (x_n)$, we find that  the set  $\{ z_n\colon n\in \N\}$ has to be  finite. As $(z_n)_n$ was an arbitrary sequence in $\sigma(M_\phi)$ that  approximates $1$, we conclude that  $1$ is isolated in $\sigma (M_\phi)$.
  Theorem \ref{P:UMEequiv1isol} shows that $M_\phi$ is uniformly mean ergodic.


\end{proof}

\begin{corollary}\label{C:umeCaractHso}
 Let $G$ be a locally compact group and let $\phi \in P^1(G)$. Then the sequence of means $(\phi_{[n]})_n$ converges to $\cf_{H_\phi}$ if and only if  $H_\phi$ is open and, for some $k\in \N$, there is $u\in A(G/H_\phi)$ such that $\norm{\phi^k-u}<1$.
\end{corollary}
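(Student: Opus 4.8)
The plan is to obtain the corollary as a reformulation of Theorem~\ref{T:umeCaractHso} in which the limit of the Ces\`aro means is pinned down explicitly. The first observation, purely formal, is that since $\norm{M_\psi}=\norm{\psi}_{MA(G)}$ for every $\psi\in B(G)$ and $M_\phi^k=M_{\phi^k}$, the condition appearing on the right of the statement --- the existence of $k\in\N$ and $u\in A(G/H_\phi)$ with $\norm{\phi^k-u}_{MA(G)}<1$ --- is \emph{verbatim} the assertion that $\widetilde\phi$ is spread-out. Consequently, by Theorem~\ref{T:umeCaractHso}, the right-hand side of the corollary is equivalent to $M_\phi$ being uniformly mean ergodic, and the task reduces to showing that $M_\phi$ is uniformly mean ergodic if and only if $(\phi_{[n]})_n$ converges to $\cf_{H_\phi}$ in the multiplier norm.

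One implication is immediate: if $\norm{\phi_{[n]}-\cf_{H_\phi}}_{MA(G)}\to0$, then $(M_\phi)_{[n]}=M_{\phi_{[n]}}\to M_{\cf_{H_\phi}}$ in the operator norm of $\ll(A(G))$, which is exactly uniform mean ergodicity of $M_\phi$ (with mean ergodic projection $M_{\cf_{H_\phi}}$). For the converse I would proceed as follows. If $M_\phi$ is uniformly mean ergodic, then $H_\phi$ is open by Proposition~\ref{P:meOpen}; from \eqref{eq} with $xH=yH=H$ one gets that $\cf_{H_\phi}$ is the diagonal coefficient $(\lambda_{G/H_\phi})_{\delta_{H_\phi}}^{\delta_{H_\phi}}$, so $\cf_{H_\phi}\in P(G)\subseteq B(G)\subseteq MA(G)$ and $M_{\cf_{H_\phi}}\in\ll(A(G))$. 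Let $P$ denote the operator-norm limit of $(M_\phi)_{[n]}$. The one step with any substance is the identification $P=M_{\cf_{H_\phi}}$: fixing $u\in A(G)$, on the one hand $(M_\phi)_{[n]}u\to Pu$ in $A(G)$, hence pointwise on $G$ (point evaluations are continuous on $A(G)$), while on the other hand Proposition~\ref{P:meOpen}(iii) gives $\phi_{[n]}\to\cf_{H_\phi}$ in the compact-open topology, so $(M_\phi)_{[n]}u(x)=\phi_{[n]}(x)u(x)\to\cf_{H_\phi}(x)u(x)$ for every $x\in G$. Comparing the two limits gives $Pu=\cf_{H_\phi}\cdot u=M_{\cf_{H_\phi}}u$, and since $u$ is arbitrary, $P=M_{\cf_{H_\phi}}$. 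Then $\norm{\phi_{[n]}-\cf_{H_\phi}}_{MA(G)}=\norm{M_{\phi_{[n]}}-M_{\cf_{H_\phi}}}=\norm{(M_\phi)_{[n]}-P}\to0$, as required.

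I do not anticipate a real obstacle: the corollary is in essence Theorem~\ref{T:umeCaractHso} read off a different quantity, and the only genuinely non-bookkeeping point is the identification of the mean ergodic projection $P$ with $M_{\cf_{H_\phi}}$, which is forced by confronting the norm limit of $(M_\phi)_{[n]}u$ with the compact-open limit of $\phi_{[n]}$ guaranteed by Proposition~\ref{P:meOpen}. The small care needed is to ensure every function in sight ($\phi_{[n]}$, $\cf_{H_\phi}$, and their differences) actually belongs to $MA(G)$ so that the multiplier-norm statement is meaningful --- precisely the role played by the openness of $H_\phi$ --- and to use $\norm{M_\psi}=\norm{\psi}_{MA(G)}$ to move freely between operator and multiplier norms.
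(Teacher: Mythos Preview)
Your argument has a norm mismatch that makes it prove the wrong statement. In the corollary the unadorned norm $\norm{\cdot}$ is the $B(G)$-norm, both in the condition $\norm{\phi^k-u}<1$ and in the (implicit) sense in which $(\phi_{[n]})_n$ converges to $\cf_{H_\phi}$; compare Theorem~\ref{P:UMEequiv1isol}(iii), where this convergence is explicitly stated ``in $B(G)$''. You read both as multiplier norms. With the correct reading, the right-hand side is \emph{not} verbatim the spread-out condition: spread-out asks for $\norm{M_{\phi^k}-M_u}<1$, which is the multiplier-norm inequality, and since $\norm{\cdot}_{MA(G)}\le\norm{\cdot}_{B(G)}$ the $B(G)$ condition is strictly stronger in general. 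Likewise, convergence of $(\phi_{[n]})_n$ in $B(G)$ is strictly stronger than uniform mean ergodicity of $M_\phi$. That these two pairs of conditions genuinely differ is exactly the content of Example~\ref{ex}, which produces $\phi\in P^1(G)$ on a free group with $M_\phi$ uniformly mean ergodic while $(\phi_{[n]})_n$ fails to converge in $B(G)$ and $\norm{\phi^k-u}_{B(G)}\ge1$ for all $k$ and all $u\in A(G)$.

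So your reduction ``RHS $\Leftrightarrow$ $\widetilde\phi$ spread-out $\Leftrightarrow$ $M_\phi$ uniformly mean ergodic $\Leftrightarrow$ LHS'' breaks at both ends when $G$ is not amenable. The paper does not deduce the corollary from Theorem~\ref{T:umeCaractHso}; instead it reruns the \emph{proof} of that theorem with $B(G)$ in place of $M(A(G))$ throughout (so ordinary functionals in $W^\ast(G)$ in place of $W^\ast_M(G)$, and the spectrum $\sigma(\phi)$ of $\phi$ in $B(G)$ in place of $\sigma(M_\phi)$), invoking Theorem~\ref{P:UMEequiv1isol}(iii) rather than (i) at the end. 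Your identification of the mean-ergodic projection is fine, but it only gives the multiplier-norm statement, which is Theorem~\ref{T:umeCaractHso} itself, not the corollary.
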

\begin{proof}
The proof of Theorem \ref{T:umeCaractHso} can be used here replacing $M(A(G))$ by $B(G)$, and using the right version of Theorem \ref{P:UMEequiv1isol} throughout.
\end{proof}
 \begin{example}
 \label{ex}
 The operator $M_\phi$ can be uniformly mean ergodic, even if the sequence $(\phi_{[n]})_n$ is not convergent to  $\cf_{H_\phi}$.
\end{example}
\begin{proof}
   Let $G=F(X)$ denote the free group on countably many generators. For a given word  $w\in F$, let $|w|$ denote its word-length.

   We  now consider the function $\phi\colon G\to \C$ given by  $\phi(w)=9^{-|w|}$ for every $w\in G$, where we are assuming, as usual, that $|e|=0$.
  This is a  so-called \emph{Haagerup} function and, as any such, is positive definite, \cite[Theorem 1]{michefiga80}.

By    \cite[Theorem 2]{michefiga80},  applied to $\phi^k$ for each  $k\in \N$, $\norm{\phi^k-u}>1$ for every $u\in A(G)$. Hence $(\phi_{[n]})_n$ is not convergent by Corollary \ref{C:umeCaractHso}.

 On the other hand, if we define
  $\psi=\phi-\Cf_{\{e\}}$, then
 \begin{align*}
  \sup_{w\in G}\left|\psi(w)(1+|w|)^2\right|
 &  =  \sup_{n\geq 1} 9^{-n} (n+1)^2<\frac{1}{2}.
\end{align*}
The estimate obtained in     \cite[Corollary 2.4]{branforrzwar17},  a consequence of Haagerup's inequality \cite[Lemma 1.4]{haag79}, shows then that
$ \norm{M_\psi}<1.$
This implies that $\lim (M_{\psi})^n=0$. Noting that, for each $n\in \N$, $\psi^n =\phi^n-\Cf_{e}$, we conclude that $\lim_n ||M_{\phi^n}-M_{\Cf_{e}}\|=0$.


%
\end{proof}

\begin{corollary}
\label{adapted}
Let $G$ be a locally compact group and let $\phi\in P^1(G)$ be adapted. Then the following are equivalent
\begin{itemize}
 \item[(i)] $M_\phi\colon A(G)\to A(G)$ is uniformly mean ergodic.
 \item[(ii)]  $G$ is discrete and $\phi$ is spread-out.
 \item[(iii)] 1 is isolated in $\sigma(M_\phi)$.
 \end{itemize}
\end{corollary}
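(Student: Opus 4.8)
The plan is to read off all three equivalences directly from the two main theorems of this section, once we record that $\phi$ being adapted means exactly $H_\phi=\{e\}$.

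The equivalence of (i) and (iii) is nothing but Theorem~\ref{P:UMEequiv1isol}(i): ``$1$ is isolated in $\sigma(M_\phi)$'' is precisely the statement that $1$ is not an accumulation point of $\sigma(M_\phi)$, and this half of the corollary does not even use adaptedness. For the equivalence of (i) and (ii) I would invoke Theorem~\ref{T:umeCaractHso}, which says that $M_\phi$ is uniformly mean ergodic if and only if $H_\phi$ is open and $\widetilde\phi$ is spread-out, and then specialize to $H_\phi=\{e\}$. On the one hand, $\{e\}$ is open in $G$ exactly when $G$ is discrete, giving the first clause of (ii). On the other hand, when $H_\phi=\{e\}$ the quotient map $p\colon G\to G/H_\phi$ is an isomorphism of topological groups, so $A(G/H_\phi)=A(G)$ — this is the remark made immediately after the definition of $A(G/H)$ — and $\widetilde\phi=\phi$. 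Consequently the condition defining spread-out for $\widetilde\phi$, namely the existence of $k\in\N$ and $v\in A(G/H_\phi)$ with $\|M_\phi^k-M_v\|<1$, coincides with the condition defining spread-out for $\phi$, the extra clause ``$v$ can be taken in $A(G)$'' being automatic. Hence (i) holds iff $G$ is discrete and $\phi$ is spread-out, which is (ii).

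There is no genuine obstacle here: the corollary is a straightforward specialization of Theorems~\ref{P:UMEequiv1isol} and \ref{T:umeCaractHso} to the case $H_\phi=\{e\}$, and the only points worth making explicit are the identification $A(G/\{e\})=A(G)$ and the resulting collapse of the notions of spread-out for $\phi$ and for $\widetilde\phi$ into a single condition in the adapted setting.
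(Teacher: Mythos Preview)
Your proposal is correct and matches the paper's intended argument: the corollary is stated without proof precisely because it is an immediate specialization of Theorem~\ref{P:UMEequiv1isol}(i) and Theorem~\ref{T:umeCaractHso} to the case $H_\phi=\{e\}$, and you have spelled out exactly the two identifications (that $\{e\}$ open means $G$ discrete, and that $A(G/\{e\})=A(G)$ so $\widetilde\phi$ spread-out coincides with $\phi$ spread-out) that make this work.
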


All the preceding results can be easily  adapted to analyze the convergence of the iterates $M_\phi^n$. We state here the adapted case.
\begin{corollary}
\label{adapted2}
Let $G$ be a  locally compact group and let $\phi\in P^1(G)$ be adapted. Then the following are equivalent
\begin{itemize}
 \item[(i)] $(M_\phi^n)_n$ is norm convergent.
  \item[(ii)] $1$ is isolated in $\sigma(M_\phi)$ and $\sigma(M_\phi)\cap \T=\{1\}$.
 \item[(iii)] $G$ is discrete and  $\phi$ is both spread-out and strictly aperiodic.
 \item[(iv)] $M_\phi:A(G)\to A(G)$ is uniformly mean ergodic and $(M_{\phi^n})_n$ converges  in the strong operator topology.
 \end{itemize}
\end{corollary}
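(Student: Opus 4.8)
The plan is to establish the chain (i) $\Leftrightarrow$ (ii), (i) $\Rightarrow$ (iv) $\Rightarrow$ (iii) $\Rightarrow$ (i), throughout using that $M_\phi$ is power bounded (indeed $\norm{M_\phi}=\norm{\phi}_{MA(G)}\leq\norm{\phi}_{B(G)}=1$ by \eqref{comp.norms}, since $\phi\in P^1(G)$). The equivalence (i) $\Leftrightarrow$ (ii) is exactly Theorem~\ref{P:UMEequiv1isol}(ii), so nothing new is needed there. For (i) $\Rightarrow$ (iv): norm convergence of $(M_\phi^n)_n$ forces norm convergence of the Cesàro means, hence $M_\phi$ is uniformly mean ergodic, and norm convergence of $(M_{\phi^n})_n$ trivially implies its convergence in the strong operator topology. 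For (iv) $\Rightarrow$ (iii): from uniform mean ergodicity of $M_\phi$ together with adaptedness, Corollary~\ref{adapted} gives that $G$ is discrete and $\phi$ is spread-out; from strong-operator convergence of $(M_{\phi^n})_n$, Proposition~\ref{iteratessot} gives $E_\phi=H_\phi=\{e\}$, that is, $\phi$ is strictly aperiodic.

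The heart of the matter is (iii) $\Rightarrow$ (i), which I would route through quasi-compactness. The first step is the observation that, when $G$ is discrete, $M_v\colon A(G)\to A(G)$ is a compact operator for every $v\in A(G)$: finitely supported functions are dense in $A(G)$, and if $v_n$ is finitely supported then $M_{v_n}$ has finite rank (its range lies in the finite-dimensional space of functions supported on $\supp v_n$), while $\norm{M_v-M_{v_n}}=\norm{v-v_n}_{MA(G)}\leq\norm{v-v_n}_{B(G)}\to 0$ by \eqref{comp.norms}; hence $M_v$ is a norm limit of finite-rank operators. Consequently, if $\phi$ is spread-out --- say $\norm{M_\phi^k-M_v}<1$ for some $k\in\N$ and $v\in A(G)$ --- then $M_\phi$ is quasi-compact.

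It then remains to locate $\sigma_p(M_\phi)\cap\T$. If $\lambda\in\T$ and $M_\phi u=\lambda u$ for some $0\neq u\in A(G)$, then $\phi(x)=\lambda$, and in particular $|\phi(x)|=1$, at every $x$ with $u(x)\neq 0$; strict aperiodicity ($E_\phi=\{e\}$) forces such an $x$ to be $e$, so $u$ is a scalar multiple of $\cf_{\{e\}}$ and therefore $\lambda=\phi(e)=1$. Thus $\sigma_p(M_\phi)\cap\T\subseteq\{1\}$, and since $M_\phi$ is quasi-compact and power bounded, Theorem~\ref{yosidakaku}(ii) yields that $(M_\phi^n)_n$ converges in norm, to a finite-rank projection which is nonzero because $\cf_{\{e\}}\in A(G)$ (here we use that $G$ is discrete) is fixed by $M_\phi$. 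This proves (iii) $\Rightarrow$ (i) and closes the cycle.

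The main obstacle is the identification, inside (iii) $\Rightarrow$ (i), of the spread-out hypothesis with quasi-compactness of $M_\phi$; once that is in place, the Yosida--Kakutani theorem does the rest and the point-spectrum computation is a short direct argument from strict aperiodicity. One must be careful that the compactness of $M_v$ genuinely uses discreteness: for non-discrete $G$ the natural truncations of $v\in A(G)$ lie in $C_c(G)$ but are no longer finitely supported, so $M_v$ need not be compact --- which is precisely why ``$G$ discrete'' is an indispensable ingredient of hypothesis (iii).
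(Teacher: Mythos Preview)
Your argument is correct. The logical scheme differs slightly from the paper's (the paper runs (i)$\Rightarrow$(ii)$\Rightarrow$(iii)$\Rightarrow$(i) and treats (iii)$\Leftrightarrow$(iv) separately), but the substantive difference lies in (iii)$\Rightarrow$(i). The paper argues spectrally: given $z\in\sigma(\phi)\cap\T$, it picks a character $\chi\in\Delta(B(G))$ with $\chi(\phi)=z$, uses the decomposition $\phi=\phi_a+\phi_s$ to force $\restr{\chi}{A(G)}\neq 0$, identifies $\chi$ with a point evaluation, and deduces $z\in\phi(E_\phi)=\{1\}$; then Corollary~\ref{adapted} and Theorem~\ref{P:UMEequiv1isol}(ii) finish. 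You instead show directly that $M_\phi$ is quasi-compact (via compactness of $M_v$ for $v\in A(G)$, $G$ discrete) and compute $\sigma_p(M_\phi)\cap\T\subseteq\{1\}$ by a one-line support argument, then invoke Yosida--Kakutani. Your route is more self-contained and operator-theoretic, avoiding any appeal to the character space of $B(G)$; in fact it is precisely the mechanism the paper deploys later in Theorem~\ref{qc} and in the final theorem, so you have anticipated that machinery. The paper's route, on the other hand, yields the full spectral statement $\sigma(M_\phi)\cap\T\subseteq\{1\}$ (not just the point spectrum) as a byproduct, which is what condition (ii) records.
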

\begin{proof}
Theorem \ref{P:UMEequiv1isol}   shows that assertion (i) implies assertion (ii).
Corollary \ref{adapted} shows that (ii) implies the first two conditions of (iii); strict aperiodicity is deduced directly from $\overline{\phi(G)}\subseteq \sigma(M_\phi)$.

We now check that  (iii) implies (i) and the first three statements will have been shown to be  equivalent.

 Assume now that   assertion (iii) holds, so that $\phi=\phi_a+\phi_s$ with $\phi_a\in A(G)$ and $\norm{\phi_s}<1$.
  Consider then $z\in \sigma(\phi)\cap \T$. There is then $\chi\in \Delta(B(G))$ such that $\chi(\phi)=z$. It follows that   $\restr{\chi }{A(G)}\neq \{0\}$, for, otherwise,  \[|\chi(\phi)|=|\chi(\phi_s)|\leq \norm{\phi_s}<1.\]

    Knowing that there is $x\in G$ such that $\chi(u)=u(x)$ for every $u\in A(G)$, we pick  $u_x\in A(G)$ with $u_x(x)\neq 0$. And using that $\phi u_x\in A(G)$, we see that $\phi(x)=\chi(\phi)$, i.e., that  $x\in E_\phi$. But  this means $z=1$, $\phi $ being strictly aperiodic. We conclude that $\sigma(M_\phi)\cap \T\subseteq \{1\}$. Corollary \ref{adapted} then ensures that we can apply statement (ii) of Theorem \ref{P:UMEequiv1isol} and deduce that $(M_\phi^n)_n$ is norm convergent.

The equivalence of (iii) and (iv) follows from Corollary \ref{adapted} and  Proposition \ref{iteratessot}.
\end{proof}

\begin{remark}\label{dissoc}
It could be tempting to conjecture that, for a strictly aperiodic $\phi\in P^1(G)$, $\sigma(M_\phi)\cap \T=\{1\}$. In that way, the spread-out  property  would take care of uniform ergodicity and strict aperiodicity would take care of $\sigma (M_\phi)\cap \T=\{1\}$. That is, however, false.
Let $\mu$ be a measure supported on an  independent Cantor subset $P$ of $\T$, $\mu$ is then strictly aperiodic. But  powers  of $\mu$  are mutually singular, \cite[Theorem 5.3.2]{rudin90}, and it is known that, in that case $\sigma(\mu)=\T$, \cite[Theorem 6.1.1]{grahmcge79}.
\end{remark}

\subsection{Uniform mean ergodicity and quasi-compact operators}
 The theorem of Yosida and Kakutani (Theorem~\ref{yosidakaku}) states that a quasi-compact operator with bounded powers is always uniformly mean ergodic. On the other hand, if $G$ is discrete, Lau \cite{lau79} characterizes compact multipliers on $A(G)$ as precisely those given by functions of $A(G)$. Hence, for a discrete amenable group and an adapted $\phi\in \mathcal{P}^1(G)$, $M_\phi$   is  quasi-compact    if and only if
 $\phi$ is spread-out.   We observe next that the condition $G$ discrete is required when discussing quasi-compactness.
\begin{lemma}
\label{L:QCimpDisc}
    Let $G$ be a locally compact group and let $\phi\in P^1(G)$ with $H_\phi$ open. If $M_\phi$ is quasi-compact, then $G$ is discrete.
\end{lemma}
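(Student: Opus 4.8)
The plan is to argue by contraposition: assuming $G$ is not discrete, I will show that $M_\phi$ cannot be quasi-compact, i.e. for every compact operator $K\in\ll(A(G))$ and every $n\in\N$ we have $\norm{M_\phi^n-K}\geq 1$. The starting observation is that $\phi^n\in P^1(G)$ as well and that, since $H_\phi=H_{\phi^n}$ is open, $\phi^n$ is constant ($=1$) on $H_\phi$ by \eqref{mult}. So it suffices to produce, for each compact $K$, a sequence $(u_m)_m$ in the unit ball of $A(G)$ with $\norm{M_\phi^n u_m-K u_m}$ bounded below by something arbitrarily close to $1$; a standard way to do this is to find $(u_m)$ in the unit ball of $A(G)$ that converges weakly to $0$ (so that $Ku_m\to 0$ in norm by compactness of $K$) while $\norm{M_\phi^n u_m}=\norm{\phi^n u_m}$ stays close to $1$.

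The construction of such a null sequence exploits non-discreteness. If $G$ is not discrete, $\{e\}$ is not open, so $H_\phi$ (which is open) strictly contains no neighbourhood basis collapsing to a point; more usefully, $H_\phi$ itself, being an open subgroup, is a non-discrete locally compact group, and $\phi$ restricted to $H_\phi$ is identically $1$. I would pick a neighbourhood basis $(V_m)$ of $e$ inside $H_\phi$ with $\mathrm{m}_G(V_m)\to 0$ (possible precisely because $H_\phi$ is non-discrete) and set $u_m=\mathrm{m}_G(V_m)^{-1}\,\Cf_{V_m}\ast\widetilde{\Cf_{V_m}}\in A(G)$. Then $u_m\in A(G)$ with $\norm{u_m}_{A(G)}=u_m(e)=1$, $u_m$ is supported in $V_mV_m^{-1}\subseteq H_\phi$, and $u_m\to 0$ weakly in $A(G)$ (test against $\VN(G)$: these are approximate-identity-type bumps shrinking to a point, and pairing with a fixed normal functional tends to $0$ — this is the place to be a little careful, using that $A(G)^*=\VN(G)$ and that $\langle u_m,\lambda_2(f)\rangle\to 0$ for $f\in L^1(G)$, then density). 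Since $\phi\equiv 1$ on $H_\phi\supseteq\supp u_m$, we get $M_\phi^n u_m=\phi^n u_m=u_m$, so $\norm{M_\phi^n u_m}=1$ while $\norm{Ku_m}\to 0$. Hence $\norm{M_\phi^n-K}\geq\limsup_m\norm{(M_\phi^n-K)u_m}\geq 1$ for every compact $K$ and every $n$, contradicting quasi-compactness.

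The main obstacle is verifying that $u_m\to 0$ weakly in $A(G)$ rather than just pointwise: one must check $\langle \lambda_2(f),u_m\rangle\to 0$ for all $f\in L^1(G)$ (equivalently for $f\in C_c(G)$), which is a Lebesgue-point / continuity-of-translation argument giving $\langle\lambda_2(f),u_m\rangle=\int u_m(t)\,(\text{something}\to f(e))\to $ a limit, and then one arranges the bumps (or passes to the augmentation ideal $A_0(G)$, replacing $u_m$ by $u_m-u_{m'}$ for a fixed small $m'$, so that the weak limit is literally $0$ while the norms of $M_\phi^n$ applied to them stay near $1$) to make that limit vanish; alternatively one invokes that $A(G)$ has no nonzero compact multipliers supported near $e$ when $G$ is non-discrete via Lau's theorem \cite{lau79}, but the hands-on null-sequence argument is cleaner and self-contained. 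Everything else is routine: $\norm{u_m}_{A(G)}=1$ is the standard normalization of a positive-definite bump, and $\phi|_{H_\phi}\equiv 1$ is exactly \eqref{mult}.
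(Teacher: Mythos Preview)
There is a genuine gap: your bumps $u_m$ are \emph{not} weakly null in $A(G)$. The dual of $A(G)$ is $\VN(G)$, and $\lambda_2(\delta_e)\in\VN(G)$ pairs with any $u\in A(G)$ as $\langle\lambda_2(\delta_e),u\rangle=u(e)$; since $u_m(e)=1$ for all $m$, the sequence cannot converge weakly to $0$. Testing only against $\lambda_2(f)$ for $f\in L^1(G)$ (or $C_c(G)$) is not enough: that subspace is merely weak$^\ast$-dense in $\VN(G)$, which says nothing about weak convergence in the predual. Your proposed fix $v_m=u_m-u_{m'}$ does not help either: testing against $\lambda_2(\delta_x)$ for some $x\neq e$ with $u_{m'}(x)\neq 0$ gives $\langle\lambda_2(\delta_x),v_m\rangle\to -u_{m'}(x)\neq 0$. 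In fact the obstruction is structural: for $G$ compact (e.g.\ $G=\T$), $A(G)$ has the Schur property, so the unit sphere of $A(G)$ contains \emph{no} weakly null sequence whatsoever, and the route ``weakly null $\Rightarrow Ku_m\to 0$'' is simply unavailable.

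The paper bypasses weak convergence entirely. It invokes a result of Chou (\cite[Theorem 3.2]{chou82}) producing, in the non-discrete open subgroup $H_\phi$, a sequence $(u_n)$ in the unit sphere of $A(H_\phi)$ with $\norm{u_n-u_m}_{A(H_\phi)}=2$ for $n\neq m$; these extend isometrically to $A(G)$ and satisfy $M_\phi^k\overset{\circ}{u_n}=\overset{\circ}{u_n}$. Compactness of $K$ is then used only to extract a Cauchy subsequence of $(K\overset{\circ}{u_n})$, and a direct norm estimate on $(M_\phi^k-K)(\overset{\circ}{u_n}-\overset{\circ}{u_m})$ yields the contradiction. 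Your bump construction could in principle be pushed in this direction (the $u_m$ are asymptotically $2$-separated in $A(G)$), but as written the weakly-null mechanism you rely on does not go through.
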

\begin{proof}
    Assume $G$ is not discrete. Then, $H_\phi$, being open,  is not discrete either. For each $u\in A(H_\phi)$, we denote by $ \overset{\circ}{\, u }\in A(G)$ the extension of $u$ to $G$, by setting it to  $0$ in $G\setminus H_\phi$. By  \cite[Proposition 2.4.1]{kalau18}, we have $ \norm{\overset{\circ}{\, u}}=\norm{u}_{A(H_\phi)}$.
    Applying  \cite[Theorem 3.2]{chou82} we can find a sequence $(u_n)_n\subseteq A(H_\phi)$ with
   $\|u_n\|_{\mbox{\tiny $A(H_\phi)$}} = 1$   for every $n\in \N$,  and $\|u_n-u_m\|_{A(H_\phi)} = 2$, for $n\neq m$.

    Since $M_\phi$ is quasi-compact, there exist $k\in \N$ and $K$ a compact operator with $\|M_\phi ^k - K\|<A< 1$. Since $K$ is compact we can assume, passing to a subsequence, if necessary,  that $(K\overset{\circ}{ \,u _n})$ is a Cauchy sequence. Then for $n,m$ big enough, $\|K\overset{\circ}{ \,u _n}- K\overset{\circ}{\, u _m}\| <2-2A$. We have then,
    \begin{align*}
        2A &>
          \|M_\phi ^k \overset{\circ}{ \,u _n} - M_\phi ^k \overset{\circ}{ \,u _m}\| - \| K \overset{\circ}{ \,u _n} - K\overset{\circ}{ \,u _m}\|
        \\
        &\geq \|M_\phi ^k \overset{\circ}{ \,u _n} - M_\phi ^k \overset{\circ}{ \,u _m}\| +2A-2
        \\
        &= \|\overset{\circ}{ \,u _n}- \overset{\circ}{ \,u _m}\| +2A -2.
    \end{align*}
    As $ \norm{\overset{\circ}{\, u}}=\norm{u}$ for every $u\in A(H_\phi)$,  , we find that
    \[
        2A> \| u_n - u_m\|_{\mbox{\tiny $A(H_\phi)$}} +2A -2 = 2A,
    \]
    which is a contradiction, so $G$ is discrete.
\end{proof}

One can characterize quasi-compactness of $M_\phi$ in the same way as quasi-compactness of the convolution operator $\lambda_1(\mu)$ was characterized in \cite[Theorem 5.24]{gajoro24}.
Our proof here is slightly simpler.

\begin{theorem}
\label{qc}
    Let $G$ be a locally compact group and $\phi\in P^1(G)$. The following assertions are equivalent:
    \begin{enumerate}
        \item[(1)] The operator $M_\phi$ is quasi-compact.
        \item[(2)] $(M_{\phi_{[n]}})_n$ is norm convergent to a finite dimensional projection.
        \item[(3)] $G$ is discrete, $H_\phi$ finite and $\tilde \phi$ spread-out.
    \end{enumerate}
\end{theorem}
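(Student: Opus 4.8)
The plan is to prove the cycle of implications $(1)\Rightarrow(2)\Rightarrow(3)\Rightarrow(1)$. For $(1)\Rightarrow(2)$ I would first observe that $M_\phi$ is power bounded: the pointwise product of positive definite functions is positive definite, so $\phi^n\in P^1(G)$ and hence $\norm{M_\phi^n}=\norm{M_{\phi^n}}\le\norm{\phi^n}_{B(G)}=\phi^n(e)=1$ for every $n$, by \eqref{comp.norms}. Granting this, the Yosida--Kakutani theorem (Theorem \ref{yosidakaku}(i)) gives at once that $(M_{\phi_{[n]}})_n=\bigl((M_\phi)_{[n]}\bigr)_n$ converges in operator norm to a finite-rank projection, which is exactly (2).

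For $(2)\Rightarrow(3)$ I would argue as follows. Norm convergence of the Ces\`aro means makes $M_\phi$ uniformly mean ergodic, so Theorem \ref{T:umeCaractHso} already yields that $H_\phi$ is open and that $\widetilde\phi$ is spread-out; what remains is to show that $G$ is discrete and $H_\phi$ finite. The limit $P$ is the projection onto $\ker(I-M_\phi)$, and a function $u\in A(G)$ satisfies $M_\phi u=u$ exactly when $u$ vanishes off $H_\phi$. Since $H_\phi$ is open, this subspace is isometrically identified with $A(H_\phi)$ via extension by zero (\cite[Proposition 2.4.1]{kalau18}), so $A(H_\phi)$ is finite dimensional. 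As distinct characters of an algebra are linearly independent, a finite-dimensional commutative Banach algebra has finite Gelfand spectrum; the spectrum of $A(H_\phi)$ being $H_\phi$, we conclude that $H_\phi$ is finite, hence discrete. Being also open in $G$, this forces $\{e\}$ to be open in $G$, i.e.\ $G$ is discrete, and (3) follows.

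For $(3)\Rightarrow(1)$, fix $k\in\N$ and $v\in A(G/H_\phi)$ with $\norm{M_\phi^k-M_v}<1$; I would show $M_v$ is compact. Because $H_\phi$ is finite, each coset $xH_\phi$ is a finite subset of the discrete group $G$, so $\cf_{xH_\phi}$ is a finite combination of point masses and therefore lies in $A(G)$; since $A(G/H_\phi)$ is the closed linear span of the functions $\cf_{xH_\phi}$ and $A(G)$ is norm closed in $B(G)$, it follows that $A(G/H_\phi)\subseteq A(G)$, and in particular $v\in A(G)$. For a discrete group, finitely supported functions are dense in $A(G)$, and multiplication by a finitely supported function has finite rank (its range sits in the span of the point masses at its support), so $M_v$ is a norm limit of finite-rank operators, hence compact (this is the easy half of Lau's description of compact multipliers \cite{lau79}). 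Thus $\norm{M_\phi^k-M_v}<1$ with $M_v$ compact is precisely the statement that $M_\phi$ is quasi-compact.

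The proof is essentially a synthesis of the Yosida--Kakutani theorem, Theorem \ref{T:umeCaractHso} and Lau's theorem, and I do not anticipate a genuine obstacle; the only points requiring some care are the isometric identification $\ker(I-M_\phi)\cong A(H_\phi)$ for $H_\phi$ open together with the elementary facts that $A(H)$ is finite dimensional if and only if $H$ is finite and that $A(G/H)\subseteq A(G)$ whenever $H$ is finite.
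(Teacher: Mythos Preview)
Your proof is correct and follows essentially the same route as the paper: the cycle $(1)\Rightarrow(2)\Rightarrow(3)\Rightarrow(1)$ via Yosida--Kakutani, Theorem~\ref{T:umeCaractHso}, and Lau's compactness result. The only cosmetic difference is in $(2)\Rightarrow(3)$: the paper shows $\cf_{H_\phi}\cdot A(G)$ is finite dimensional and exhibits linearly independent elements indexed by points of $H_\phi$ directly, whereas you identify $\ker(I-M_\phi)\cong A(H_\phi)$ and read off the finiteness of $H_\phi$ from its Gelfand spectrum---both arguments are equivalent and equally short.
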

\begin{proof}
    Assertion $(1)$ implies assertion $(2)$ by Yosida-Kakutani Theorem~\ref{yosidakaku}.

    Assume $(2)$ holds, then $\tilde\phi$ is spread-out by Theorem~\ref{T:umeCaractHso}.
     Since $(M_{\phi_{[n]}})_n$ converges to $M_{\Cf_{H_\phi}}$ we get that $\Cf_{H_\phi}\cdot A(G)$ is finite dimensional.
     Now, for any finite family $x_1,\ldots,x_N\in H_\phi$, we can find $U_N$ an open neighbourhood of $e$, such that $x_i U_N \cap x_j U_N =\emptyset$, for $i\neq j$. This implies that $\{\Cf_{H_\phi}\Cf_{x_1U_N},\ldots , \Cf_{H_\phi}\Cf_{x_NU_N}\}$ are linearly independent. As $\Cf_{H_\phi}\cdot A(G)$ is finite dimensional, we see that  $H_\phi$ has to be finite. Since  it is also open (by Theorem \ref{P:meOpen}) $G$ is discrete.

    Finally, when $G$ is discrete, the functions of $A(G)$ define compact multipliers by \cite[Lemma 6.8]{lau79}. Taking into account that $A(G/H_\phi)\subseteq A(G)$ when $H_\phi$ is finite, if  $\tilde\phi$ is spread-out, there are $k\in \N$ and $\phi_a\in A(G)$ such that  $\|M_\phi ^k - M_{\phi_a}\|<1$ and $M_{\phi_a}$ is compact. Therefore assertion  (1) holds.
\end{proof}

\begin{example}
  Uniform convergence of the means of a multiplication  operator $M_\phi$, with $\phi\in B(G)$,  to a finite dimensional operator does not necessarily imply that $M_\phi$ is quasi-compact, if $\phi$ is not positive-definite.\end{example}\begin{proof}
It is indeed enough to consider the constant functions $\phi(x)=\alpha\in \T$, $\alpha\neq 1$, on an infinite group $G$. The sequence
$(M_{\phi_{[n]}})_n$ is then convergent to 0.
The operator $M_\phi$ is not quasi-compact, since if  $\norm{M_\phi^n-K}=\norm{\alpha^n Id-K}<1$, for some $n\in \N$ and some compact operator $K$, then   $K$ would be   compact and invertible, which is impossible unless $A(G)$ is finite dimensional.  This shows that, in Theorem \ref{qc},  (2) does not imply (1)  when $\phi\notin P(G)$.
  \end{proof}

\section{Dualizing random walks}
If $G$ is a locally compact group,  a random walk whose transitions are  determined by a probability measure  $\mu$ is ergodic if and only if the operator $\lambda_1^0(\mu)$,  obtained from restricting the convolution operator $\lambda_1(\mu)$ to the augmentation ideal $L_1^0(G)=\{f\in L_1(G)\colon \int f(x)\dmg(x)=0\}$, is mean ergodic and its means converge to 0, see \cite{rose81}.

In our context, we replace $L_1^0(G)$ by $A_0(G)=\{u\in A(G)\colon u(e)=0\}$ and consider  the operator $M_\phi^0$ that results from restricting $M_\phi$, $\phi \in \mathcal{P}^1(G)$, to $A_0(G)$. In this section, we address the problem of characterizing under which conditions the means of $M_\phi^0$ converge to 0, both in the strong operator topology and the uniform norm.
\subsection{The operator $M_\phi^0$}
The results in this subsection  offer a glance into the nature of the operator $M_\phi^0$.
They follow exactly the same pattern of the results obtained in \cite[Sections 3 and 5]{gajoro24} for the operator $\lambda_1^0(\mu)$.  We actually  need only  prove Theorem \ref{normm0}, as the rest of the proofs can be applied to this case.
\begin{lemma}
  \label{lem:norm}
  Let $G$ be a locally compact group and let $\phi\in B(G)$. If there are $u\in A(G)\cap P^1(G) $ and $\psi\in P^1(G)$ such that
  \begin{enumerate}
    \item $\norm{\phi u-\psi}\geq M$  for some $M>0$ and
    \item $\norm{\phi u \psi-\psi}\leq \varepsilon$, for some $0<\varepsilon<M$, then
  \end{enumerate}
  \[\norm{M^0_\phi}\geq \frac{M-\varepsilon}{2}.
  \]
\end{lemma}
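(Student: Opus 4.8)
Looking at this lemma, I need to bound the norm of $M_\phi^0: A_0(G) \to A_0(G)$ from below, given two hypotheses about approximation behavior of $\phi u$ against a positive definite function $\psi$.

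My plan is as follows. The natural candidate for a test vector in $A_0(G)$ is $u - \psi$, or something built from $u$ and $\psi$, since both lie in $P^1(G)$ and hence take value $1$ at $e$, making differences like $u-\psi$ land in $A_0(G)$. First I would set $w = u - \psi \in A_0(G)$ and compute $M_\phi^0 w = \phi u - \phi \psi$. This doesn't immediately connect to the hypotheses, so instead I would consider applying $M_\phi^0$ to $\psi - u\psi$ or similar. Actually, the cleaner route: note $u\psi \in A(G)$ (as $A(G)$ is an ideal in $B(G)$ and $u \in A(G)$), and $u\psi(e) = 1$, so $\psi - u\psi \in A_0(G)$ — wait, that requires $\psi \in A(G)$, which we don't have. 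Let me reconsider: $u - u\psi \in A_0(G)$ since $u(e) = 1 = u(e)\psi(e)$ and $u - u\psi \in A(G)$. Then $M_\phi^0(u - u\psi) = \phi u - \phi u \psi$. By hypothesis (2), $\norm{\phi u \psi - \psi} \leq \varepsilon$, so $\norm{\phi u - \psi} \leq \norm{\phi u - \phi u\psi} + \varepsilon$, giving $\norm{M_\phi^0(u - u\psi)} = \norm{\phi u - \phi u \psi} \geq \norm{\phi u - \psi} - \varepsilon \geq M - \varepsilon$ by hypothesis (1). It remains to bound $\norm{u - u\psi}$ from above by $2$: since $\norm{u} \leq 1$ (as $u \in P^1(G)$, its $B(G)$-norm equals $u(e) = 1$) and $\norm{u\psi} \leq \norm{u}\norm{\psi} \leq 1$, we get $\norm{u - u\psi} \leq 2$. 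Hence $\norm{M_\phi^0} \geq (M-\varepsilon)/2$.

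The key steps in order: (1) verify $u - u\psi \in A_0(G)$ using $u \in A(G)$, $A(G)$ an ideal in $B(G)$, and $u(e) = \psi(e) = 1$; (2) compute $M_\phi^0(u - u\psi) = \phi u - \phi u \psi$; (3) apply the triangle inequality with hypothesis (2) to get $\norm{\phi u - \phi u\psi} \geq \norm{\phi u - \psi} - \norm{\psi - \phi u \psi} \geq M - \varepsilon > 0$; (4) bound the test vector's norm by $2$ using that positive definite normalized functions have $B(G)$-norm $1$; (5) divide to conclude.

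The main obstacle — really the only subtle point — is ensuring the right norm is being used throughout and that $u - u\psi$ genuinely lies in $A_0(G)$ with the claimed norm bound. One must be careful that the norm on $A(G)$ is the $B(G)$-norm (which is the case by the definition $A(G) = \overline{\langle B(G) \cap C_c(G)\rangle}$ in the $B(G)$-norm), so that $\norm{u} = 1$ for $u \in A(G) \cap P^1(G)$ and submultiplicativity $\norm{u\psi} \leq \norm{u}\,\norm{\psi}$ holds since $B(G)$ is a Banach algebra under pointwise multiplication with $A(G)$ a closed ideal. With those facts in hand the argument is a short direct estimate; no deep machinery (spectral theory, $W^\ast(G)$, etc.) is needed for this lemma.

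\begin{proof}
Since $A(G)$ is a closed ideal of $B(G)$ and $u\in A(G)$, the product $u\psi$ belongs to $A(G)$. Moreover $(u\psi)(e)=u(e)\psi(e)=1$, so $u-u\psi\in A_0(G)$. Because $u$ and $\psi$ lie in $P^1(G)$, their $B(G)$-norms equal their value at $e$, namely $\norm{u}=\norm{\psi}=1$; hence $\norm{u\psi}\leq\norm{u}\,\norm{\psi}\leq1$ and therefore
\[
\norm{u-u\psi}\leq \norm{u}+\norm{u\psi}\leq 2 .
\]
On the other hand,
\[
M_\phi^0(u-u\psi)=\phi u-\phi u\psi,
\]
and, combining the two hypotheses,
\[
\norm{\phi u-\phi u\psi}\geq \norm{\phi u-\psi}-\norm{\psi-\phi u\psi}\geq M-\varepsilon>0 .
\]
Consequently
\[
\norm{M_\phi^0}\geq \frac{\norm{M_\phi^0(u-u\psi)}}{\norm{u-u\psi}}\geq \frac{M-\varepsilon}{2},
\]
as claimed.
\end{proof}
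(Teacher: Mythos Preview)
Your proof is correct and is essentially identical to the paper's own argument: both use the test vector $u-u\psi\in A_0(G)$, bound its norm by $2$, and apply the triangle inequality $\norm{\phi u-\phi u\psi}\geq\norm{\phi u-\psi}-\norm{\psi-\phi u\psi}\geq M-\varepsilon$ to conclude. Your write-up is slightly more detailed in justifying $\norm{u}=\norm{\psi}=1$ and that $u\psi\in A(G)$, but the substance is the same.
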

    \begin{proof}
      Since $u-\psi u \in A_0(G)$,
      \[ \norm{M_\phi^0}\geq \frac{\norm{(\phi-\phi \psi)u}}{\norm{u-\psi u}}.\]
      To achieve the estimate of this lemma one just needs to observe that
      \[\norm{(\phi-\phi \psi)u}\geq \norm{\phi u-\psi}-\norm{\phi u \psi-\psi}\]
and
      \[ \norm{u-\psi u}\leq 2.\]
    \end{proof}

For our next proof, we need the concept of TI-net.
A net $(u_\alpha)_\alpha \subseteq A(G)\cap P^1(G)$ is a TI-net if
    \[
       \lim_\alpha \norm{uu_\alpha -u_\alpha}= 0, \quad \mbox{for any }  u\in A(G).
    \]
The existence of TI-nets in nondiscrete groups was shown  in \cite[Proposition 3]{rena72}, see also \cite{chou82,filagali22}.
\begin{theorem}
\label{normm0}
If $G$ is a nondiscrete locally compact group and $\phi \in P^1(G)$, then $\norm{M_\phi^0}=1$.
\end{theorem}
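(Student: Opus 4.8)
I would first dispose of the easy half and then build the lower bound from Lemma~\ref{lem:norm}, feeding it a TI-net in the $\psi$-slot.

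\smallskip
The bound $\norm{M^0_\phi}\le 1$ is immediate: since $M^0_\phi$ is the restriction of $M_\phi$ to the closed subspace $A_0(G)$, one has $\norm{M^0_\phi}\le\norm{M_\phi}=\norm\phi_{MA(G)}\le\norm\phi_{B(G)}=\phi(e)=1$ by \eqref{comp.norms} and the fact that a positive definite function attains its $B(G)$-norm at $e$. All the work is in the reverse inequality, and the plan is to produce, for every $\delta\in(0,1)$, functions to which Lemma~\ref{lem:norm} applies with $\tfrac{M-\varepsilon}{2}>1-\delta$. To that end, fix once and for all some $w\in A(G)\cap P^1(G)$ (such functions exist, e.g.\ $w=\xi\ast\widetilde\xi/\norm\xi_2^2$ for any nonzero $\xi\in C_c(G)$) and put $f:=\phi w\in A(G)\cap P^1(G)$; being in $A(G)$, $f$ is a diagonal coefficient $f=(\lambda_2)_\xi^\xi$ of the left regular representation with $\norm\xi_2^2=f(e)=1$. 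Let $(u_\alpha)_\alpha$ be a TI-net in $A(G)\cap P^1(G)$, which exists because $G$ is nondiscrete \cite{rena72}. I will apply Lemma~\ref{lem:norm} with this $w$ and with $\psi=u_\alpha$ for a sufficiently advanced index $\alpha$, so that everything reduces to showing, along the net,
\begin{enumerate}
\item $\norm{f u_\alpha-u_\alpha}\to 0$, and
\item $\norm{f-u_\alpha}\to 2$.
\end{enumerate}
Statement (1) is precisely the defining property of the TI-net applied to $f\in A(G)$ (recall $f(e)=1$). In (2) the bound $\norm{f-u_\alpha}_{B(G)}\le\norm f+\norm{u_\alpha}=2$ is trivial; the content is the matching lower bound.

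\smallskip
For the lower bound in (2) I would use that $f-u_\alpha\in A(G)$, so $\norm{f-u_\alpha}_{B(G)}=\norm{f-u_\alpha}_{A(G)}$ is computed by testing against the unit ball of $\VN(G)=A(G)^\ast$, and test against the operators $2T_h-I$, where $T_h:=\lambda_2(h)^\ast\lambda_2(h)\in\lambda_2(L^1(G))\subseteq\VN(G)$ and $h$ runs over a positive approximate identity of $L^1(G)$ with $\norm h_1=1$. Since $0\le T_h\le I$ (because $\norm{\lambda_2(h)}\le\norm h_1=1$), we have $\norm{2T_h-I}\le 1$; the point of subtracting $I$ is the \emph{doubling} gained from $f$ and $u_\alpha$ agreeing at $e$: $\bang{I,f-u_\alpha}=f(e)-u_\alpha(e)=0$, so that $\bang{2T_h-I,\,f-u_\alpha}=2\bang{T_h,f}-2\bang{T_h,u_\alpha}$. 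Here $\bang{T_h,f}=\norm{\lambda_2(h)\xi}_2^2\to\norm\xi_2^2=1$ along the approximate identity, so (2) follows once one shows that $\bang{T_h,u_\alpha}\to0$ along $\alpha$ for each fixed $h$: this gives $\liminf_\alpha\norm{f-u_\alpha}\ge 2\bang{T_h,f}$ for every $h$, hence $\liminf_\alpha\norm{f-u_\alpha}\ge 2$. Writing $T_h=\lambda_2(k)$ with $k=h^\ast\ast h\in L^1(G)$, the remaining point is the auxiliary assertion that $\bang{\lambda_2(k),u_\alpha}\to0$ for every $k\in L^1(G)$, and this is exactly where nondiscreteness is decisive. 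To prove it, fix $\varepsilon>0$ and split $k=k_0+k_1$ with $\norm{k_1}_1<\varepsilon$ and $k_0\in L^1(G)$ compactly supported with $e\notin\supp k_0$ — possible because $\{e\}$ has zero Haar measure, so both the part of $k$ concentrated near $e$ and its tail at infinity have arbitrarily small $L^1$-norm. The $k_1$-part contributes at most $\norm{k_1}_1\norm{u_\alpha}_{A(G)}<\varepsilon$ in absolute value; for the $k_0$-part, by regularity of $A(G)$ \cite{eyma64} choose $v\in A(G)$ with $v(e)=1$ and $v\equiv0$ on $\supp k_0\cup(\supp k_0)^{-1}$, so that $\bang{\lambda_2(k_0),\,vu_\alpha}=0$ for every $\alpha$ (the duality pairs $\lambda_2(k_0)$ with $vu_\alpha\in A(G)$ by integrating $k_0$ against a transform of $vu_\alpha$ that vanishes on $\supp k_0$, using $\widetilde{u_\alpha}=u_\alpha$), while $\bigl|\bang{\lambda_2(k_0),u_\alpha}-\bang{\lambda_2(k_0),vu_\alpha}\bigr|\le\norm{k_0}_1\norm{u_\alpha-vu_\alpha}_{A(G)}\to0$ by the TI-net property. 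Hence $\limsup_\alpha|\bang{\lambda_2(k),u_\alpha}|\le\varepsilon$, and $\varepsilon\to0$ finishes (2). (Alternatively, one may invoke the classical fact that, $G$ being nondiscrete, every topologically invariant mean annihilates $C^\ast_\lambda(G)$.)

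\smallskip
Finally I would assemble the pieces: given $\delta\in(0,1)$, choose $h$ with $\norm{\lambda_2(h)\xi}_2^2>1-\delta/2$, so that $\liminf_\alpha\norm{f-u_\alpha}>2-\delta$, and then an index $\alpha$ large enough that simultaneously $\norm{f-u_\alpha}>2-\delta$ and $\norm{f u_\alpha-u_\alpha}<\delta$. Lemma~\ref{lem:norm}, applied with this $w$, with $\psi=u_\alpha$, $M=2-\delta$ and $\varepsilon=\delta$ (legitimate since $0<\delta<2-\delta$), yields $\norm{M^0_\phi}\ge\tfrac{M-\varepsilon}{2}=1-\delta$; letting $\delta\to0$ gives $\norm{M^0_\phi}\ge1$, and with the upper bound $\norm{M^0_\phi}=1$. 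The main obstacle is step (2): pushing $\norm{f-u_\alpha}$ all the way to $2$ rather than merely to $1$ requires both the doubling device (testing against $2T_h-I$, which exploits that $f$ and $u_\alpha$ coincide at $e$) and the $L^1$-annihilation of the TI-net; and it is the latter — resting on the fact that $\{e\}$ carries no Haar mass — that makes essential use of the hypothesis that $G$ is nondiscrete.
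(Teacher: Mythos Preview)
Your proof is correct and follows the same strategy as the paper's: apply Lemma~\ref{lem:norm} with a fixed $u=w\in A(G)\cap P^1(G)$ and $\psi=u_\alpha$ running over a TI-net, reducing to the two limits $\norm{\phi w\,u_\alpha-u_\alpha}\to 0$ and $\norm{\phi w-u_\alpha}\to 2$. The only difference is that the paper obtains the second limit by quoting \cite[Lemma~3.1]{chou82}, whereas you supply a self-contained proof via the $2T_h-I$ testing trick and the annihilation of $\lambda_2(L^1(G))$ by TI-nets; this is a nice, explicit substitute for Chou's lemma and makes the role of nondiscreteness transparent.
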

\begin{proof}
  Let  $(u_\alpha)_\alpha$ be a TI-net in $A(G)\cap P^1(G)$ and let $u_0$ be any element  of $A(G)\cap P^1(G)$. By  \cite[Lemma 3.1]{chou82},
$\lim_\alpha \norm{\phi  u_0 -u_\alpha}=2$, while $\lim_\alpha \norm{ \phi u_0 u_\alpha-u_\alpha}=0$. Lemma \ref{lem:norm} applied,  for each $\alpha$, to $u=u_0$ and $\psi=u_\alpha$ yields that
$\norm{M_\phi^0}=1$.
\end{proof}
\begin{corollary}
  Let $G$ be a locally compact group and let $\phi\in P^1(G)$. If $\phi $ is uniformly ergodic, then $G$ is discrete.
\end{corollary}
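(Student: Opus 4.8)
The plan is to derive this directly from Theorem~\ref{normm0} by contraposition. If $\phi$ is uniformly ergodic, then by definition $\lim_n \norm{M_{\phi_{[n]}}^0} = 0$; in particular, for $n$ large, $\norm{M_{\phi_{[n]}}^0} < 1$. The obstacle to a one-line argument is that Theorem~\ref{normm0} controls the norm of $M_\psi^0$ for a single positive definite $\psi$, but $\phi_{[n]} = \frac1n(\phi + \cdots + \phi^n)$ is a convex combination of the positive definite functions $\phi, \phi^2, \dots, \phi^n$, hence is itself positive definite and satisfies $\phi_{[n]}(e) = 1$, i.e.\ $\phi_{[n]} \in P^1(G)$. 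So in fact Theorem~\ref{normm0} applies verbatim to $\psi = \phi_{[n]}$.

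First I would observe that $\phi^j \in P^1(G)$ for every $j$ (products of positive definite functions are positive definite, and $\phi^j(e)=1$), so that $\phi_{[n]}$, being a convex combination of elements of $P^1(G)$, again lies in $P^1(G)$. Next, assuming toward a contradiction that $G$ is nondiscrete, Theorem~\ref{normm0} gives $\norm{M_{\phi_{[n]}}^0} = \norm{M_{\psi}^0} = 1$ for \emph{every} $n$, where $\psi = \phi_{[n]}$. This directly contradicts $\lim_n \norm{M_{\phi_{[n]}}^0} = 0$. Hence $G$ must be discrete.

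The only step requiring a small check is that $(M_\phi)_{[n]}$ restricted to $A_0(G)$ coincides with $M_{\phi_{[n]}}^0$, i.e.\ that $(M_\phi^0)_{[n]} = M_{\phi_{[n]}}^0$; this is immediate from the identifications already recorded in the preliminaries (the paper notes $(M_\phi)_{[n]} = M_{\phi_{[n]}}$ and that $A_0(G)$ is $M_\phi$-invariant), so restricting to $A_0(G)$ is harmless. I expect no genuine obstacle here: the whole content is the recognition that the Ces\`aro means of a positive definite function are again positive definite with value $1$ at $e$, which lets Theorem~\ref{normm0} be invoked with $\psi = \phi_{[n]}$ directly. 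One could alternatively phrase the proof without contradiction: for nondiscrete $G$, $\norm{M_{\phi_{[n]}}^0}=1$ for all $n$ by Theorem~\ref{normm0}, so the means cannot converge to $0$ in norm, so $\phi$ is not uniformly ergodic.
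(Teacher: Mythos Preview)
Your proof is correct and is precisely the argument the paper intends: the corollary is stated immediately after Theorem~\ref{normm0} with no separate proof, and the only way to derive it from that theorem is to observe that $\phi_{[n]}\in P^1(G)$ and apply the theorem to $\psi=\phi_{[n]}$, exactly as you do. The verification that $\phi_{[n]}\in P^1(G)$ and that $(M_\phi^0)_{[n]}=M_{\phi_{[n]}}^0$ are the only points requiring comment, and you have handled both.
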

We do not know if the spectra of $M_\phi^0$  and $M_\phi$ are the same but they are definitely related. This is explored in the following theorem. It can be proved exactly as   Proposition 5.8 and Corollary 5.9 of \cite{gajoro24}, we only have to replace   [Proposition 3.2, loc. cit.] by Theorem \ref{normm0} here. Recall that  a complex number $z$ is in $\sigma_{\mathrm{ap}}(T)$, the \emph{approximate spectrum} of an operator  $T\in \ll(E)$, $E$ a Banach space,   if there exists a sequence $x_n$ in the unit sphere of $E$ such that $\lim_n \norm{Tx_n-zx_n}=0$.
\begin{theorem}
  \label{thm:specm0}
  Let $G$ be a locally compact group and let $\phi\in P^1(G)$. Then:
  \begin{enumerate}
    \item If $G$ is not discrete, then $ \sigma_{\mathrm{ap}}(M_\phi^0)= \sigma_{\mathrm{ap}}(M_\phi)$.
    \item 1 is isolated in $\sigma(M_\phi)$ if and only if 1 is isolated in $\sigma(M_\phi^0)$.
  \end{enumerate}
\end{theorem}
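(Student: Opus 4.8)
The plan is to use that $A_0(G)=\ker\delta_e$ is a closed, $M_\phi$-invariant subspace of codimension one in $A(G)$ on which $M_\phi$ induces the identity of $\C\cong A(G)/A_0(G)$ (because $\phi(e)=1$), and then to follow the blueprint of \cite[Proposition 5.8 and Corollary 5.9]{gajoro24}, using Theorem \ref{normm0} where that paper used the corresponding norm identity for $\lambda_1^0(\mu)$.

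First I would establish that $\sigma(M_\phi)=\sigma(M_\phi^0)\cup\{1\}$. Fixing $u_0\in A(G)\cap P^1(G)$, so that $A(G)=A_0(G)\oplus\C u_0$ topologically, and writing $zI-M_\phi$ in block form with respect to this decomposition, the lower-left block vanishes since $M_\phi$ leaves $A_0(G)$ invariant, the upper-left block is $zI-M_\phi^0$, and the lower-right block is multiplication by $z-1$; hence $zI-M_\phi$ is invertible precisely when $z\neq 1$ and $zI-M_\phi^0$ is invertible. Thus $\sigma(M_\phi)\setminus\{1\}=\sigma(M_\phi^0)\setminus\{1\}$, and since $1$ is an accumulation point of a set $S$ if and only if it is one of $S\setminus\{1\}$, assertion (2) follows without any further hypothesis.

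For assertion (1), the inclusion $\sigma_{\mathrm{ap}}(M_\phi^0)\subseteq\sigma_{\mathrm{ap}}(M_\phi)$ is automatic, the unit sphere of $A_0(G)$ lying inside that of $A(G)$. For the reverse inclusion, let $z\in\sigma_{\mathrm{ap}}(M_\phi)$ with $(u_n)$ in the unit sphere of $A(G)$ and $\|\phi u_n-zu_n\|\to 0$. Evaluating at $e$ and using $\|\cdot\|_\infty\le\|\cdot\|_{B(G)}$ from \eqref{comp.norms} gives $|1-z|\,|u_n(e)|\to 0$; so when $z\neq 1$ we have $u_n(e)\to 0$, the truncations $w_n:=u_n-u_n(e)u_0\in A_0(G)$ satisfy $\|w_n\|\to 1$ and $\|\phi w_n-zw_n\|\to 0$, and normalizing gives $z\in\sigma_{\mathrm{ap}}(M_\phi^0)$ (no hypothesis on $G$ is needed here). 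It then remains to handle $z=1$. On the one hand, $\|\phi\|_\infty\le\|M_\phi\|\le\|\phi\|_{B(G)}$ by \eqref{comp.norms} and both ends equal $1$ for $\phi\in P^1(G)$, while $1\in\sigma(M_\phi)$ because $M_\phi-I$ has range inside the proper closed subspace $A_0(G)$; hence $1\in\partial\sigma(M_\phi)\subseteq\sigma_{\mathrm{ap}}(M_\phi)$. On the other hand I would show $1\in\sigma_{\mathrm{ap}}(M_\phi^0)$ whenever $G$ is not discrete. First $1\in\sigma(M_\phi^0)$: if $H_\phi$ is open it is then non-discrete, and the space of functions of $A(H_\phi)$ vanishing at $e$ (non-trivial, $A(H_\phi)$ being infinite-dimensional) lies, after extension by zero, in $\ker(I-M_\phi^0)$; while if $H_\phi$ is not open then $M_\phi$ is not mean ergodic by Proposition \ref{P:meOpen}, hence not uniformly mean ergodic, so $1$ is not isolated in $\sigma(M_\phi)=\sigma(M_\phi^0)\cup\{1\}$ and therefore $1$ belongs to the closed set $\sigma(M_\phi^0)$. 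Now Theorem \ref{normm0} gives $\|M_\phi^0\|=1$, so $\sigma(M_\phi^0)\subseteq\overline{\D}$ and the spectral point $1$ lies on $\partial\sigma(M_\phi^0)\subseteq\sigma_{\mathrm{ap}}(M_\phi^0)$. Combining the two halves yields $\sigma_{\mathrm{ap}}(M_\phi^0)=\sigma_{\mathrm{ap}}(M_\phi)$ for non-discrete $G$.

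I expect the real difficulty to sit entirely in the value $z=1$ of assertion (1). The truncation that disposes of $z\neq 1$ breaks down there, since $u_n(e)$ need not tend to $0$, and one cannot restrict an approximate eigenvector for the eigenvalue $1$ to $A_0(G)$ while keeping its norm bounded away from $0$; this is exactly why the soft perturbation argument has to be supplemented by the hard norm identity $\|M_\phi^0\|=1$ of Theorem \ref{normm0}, which in turn rests on the existence of TI-nets in non-discrete groups.
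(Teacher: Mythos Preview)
Your argument is correct and follows precisely the route the paper indicates: it mirrors \cite[Proposition 5.8 and Corollary 5.9]{gajoro24}, with Theorem~\ref{normm0} supplying the crucial norm identity $\|M_\phi^0\|=1$ at the point $z=1$. The block-triangular reduction giving $\sigma(M_\phi)=\sigma(M_\phi^0)\cup\{1\}$, the truncation $w_n=u_n-u_n(e)u_0$ for $z\neq 1$, and the boundary-spectrum argument for $z=1$ are exactly what the referenced blueprint provides. One small remark on (2): your argument really proves that $1$ is an accumulation point of $\sigma(M_\phi)$ if and only if it is one of $\sigma(M_\phi^0)$; since $1\in\sigma(M_\phi)$ always but $1$ need not lie in $\sigma(M_\phi^0)$ (e.g.\ finite $G$ with $\phi$ adapted), the equivalence of ``isolated'' in the strict sense can fail in that degenerate direction---but this is also the paper's intended reading, as the phrasing ``not an accumulation point'' in the introduction and in Proposition~\ref{cb} makes clear.
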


\subsection{Ergodicity of $\phi$ }
In analogy with the convolution case, we have defined a function $\phi\in P^1(G)$ to be ergodic when $\phi_{[n]}u$ converges to 0 for every $u\in A_0(G)$. We prove in this subsection that $\phi$ is ergodic if and only if $\phi$ is adapted. For  convolution operators the situation is more involved. In the case of commutative or compact groups the conclusion is the same, a measure is ergodic if and only if it is adapted. However, when  the group is not amenable, no measure  can be ergodic \cite{rose81} and  every group that is finitely generated  and solvable, but  is not virtually nilpotent,   admits a nonergodic adapted measure  \cite{frischetal19}.

The project of characterizing ergodicity in $P^1(G)$ has been taken up before, often in somewhat more general contexts. Kaniuth, Lau and \"Ulger \cite[Theorem 3.4]{kanilauulger10} work with multipliers on quite general Banach algebras, albeit requiring them to have bounded approximate identities. This condition is avoided in Theorem 5.1.1 of Guex's Ph. D. dissertation  \cite{guex} but this theorem is not correct as stated.  In \cite[Theorem 3.3]{laulose99} Lau and Losert consider so-called \emph{ strongly ergodic sequences}. Their definition of strong ergodicity would then mean that  $(\pi(\phi_{[n]}))_n$ is convergent, for every representation $\pi \colon A(G)\to \ll(\h)$ of $A(G)$ as operators on a Hilbert space $\h$. This requires the existence of a canonical way of extending representations from $A(G)$ to representations of $B(G)$, and for that, again, they have to restrict their characterization to amenable groups.

We provide  next  a short direct characterization of ergodicity in $P^1(G)$ that subsumes the ones mentioned in the previous paragraph. Our extension is a clean analog of the measure-theoretic concept and does not require amenability.
%


The following elementary fact will smooth our proof of Theorem \ref{T:EiffA}.
\begin{lemma}\label{M0}
Let $G$ be a locally compact group and let $\phi \in P^1(G)$.  If $\phi$ is adapted and $M_\phi^0$ is (uniformly) mean ergodic, then $\phi$ is (uniformly) ergodic.
\end{lemma}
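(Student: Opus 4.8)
The plan is to show that when $\phi$ is adapted, the mean ergodicity of $M_\phi^0$ forces the limit projection to be $0$, whence convergence of $(M_{\phi_{[n]}}^0)$ to $0$ in the appropriate topology is exactly the (uniform) ergodicity of $\phi$. The key point is a description of the fixed space $\Fix{M_\phi^0}=\ker(M_\phi^0-I)$ inside $A_0(G)$.

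First I would recall that, by the general mean ergodic decomposition stated in the preliminaries, if $M_\phi^0$ is mean ergodic then $(M_{\phi_{[n]}}^0)$ converges in the strong operator topology to the projection $P$ onto $\Fix{M_\phi^0}$, and $A_0(G)=\overline{(I-M_\phi^0)(A_0(G))}\oplus \Fix{M_\phi^0}$. Likewise if $M_\phi^0$ is uniformly mean ergodic the same convergence holds in operator norm. So in both cases the statement ``$\phi$ is (uniformly) ergodic'' is equivalent to ``$P=0$'', i.e. to $\Fix{M_\phi^0}=\{0\}$. Thus the whole lemma reduces to the purely algebraic claim: \emph{if $\phi\in P^1(G)$ is adapted, then the only $u\in A_0(G)$ with $\phi\cdot u=u$ is $u=0$.}

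To prove that claim I would argue pointwise. Suppose $u\in A(G)$ satisfies $\phi u = u$. Then for every $x\in G$ we have $(\phi(x)-1)u(x)=0$, so $u(x)=0$ whenever $\phi(x)\neq 1$, i.e. $u$ is supported on $H_\phi=\{x:\phi(x)=1\}$. If moreover $u\in A_0(G)$, then $u(e)=0$. Now adaptedness means $H_\phi=\{e\}$, so $u$ is supported on $\{e\}$ and vanishes at $e$; hence $u\equiv 0$. This gives $\Fix{M_\phi^0}=\{0\}$ and finishes the argument. (I should double-check the trivial edge case that $u\in A(G)$ supported on a single point is necessarily $0$ when $G$ is nondiscrete, but this is automatic since elements of $A(G)$ are continuous and $A(G)\subseteq C_0(G)$; when $G$ is discrete, $\Cf_{\{e\}}\in A(G)$ but it is excluded precisely by the condition $u(e)=0$.)

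I do not anticipate a serious obstacle here; the only thing to be careful about is the bookkeeping between the three possible convergence statements (strong operator topology for ``ergodic'', operator norm for ``uniformly ergodic'') and making sure the mean ergodic decomposition is invoked in the correct form in each case. The substantive content — that the fixed space of $M_\phi^0$ is trivial for adapted $\phi$ — is the short pointwise computation above, using only that multiplication by $\phi$ on $A(G)$ is pointwise multiplication and that $H_\phi=\{e\}$.
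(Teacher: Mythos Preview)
Your proposal is correct and essentially the same as the paper's proof: both reduce to showing the limit projection $P$ is $0$ via a pointwise argument using $H_\phi=\{e\}$. The only cosmetic difference is that the paper computes $(Pu)(x)=\lim_n \phi_{[n]}(x)u(x)=0$ for $x\neq e$ directly, whereas you invoke the characterization of $P$ as the projection onto $\Fix{M_\phi^0}$ and then show that fixed space is trivial; the underlying content is identical.
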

\begin{proof}
  Let $P$ be the projection in $\ll(A_0(G))$ such that $\lim_n M_{\phi_{[n]}}^0=P$ in the corresponding topology and let $u\in A_0(G)$. If $e\neq x$, then $(Pu)(x)=\lim_n \phi_{[n]}(x)u(x)=0$. As $Pu\in A_0(G)$, this means that $Pu=0$, and hence that $P=0$.
\end{proof}

For our characterization, we need to recall the concept of support of  an element of $\VN(G)$, which  is based on the module action of $A(G)$ on $ \VN(G)$, described in page \pageref{module}, with $A(G)$ playing the role of $\mA$.
 The support of $T$, denoted $\supp T$, is then defined as the  set of all points $a\in G$ satisfying that $\lambda_2(\delta_a)$ is the weak$^*$-limit of operators of the form $ T\cdot u$, $u\in A(G)$. See \cite{eyma64} or \cite[Section 2.5]{kalau18} for all this.

\begin{theorem}
\label{T:EiffA}
Let $G$ be a locally compact group and let $\phi \in P^1(G)$. Then $\phi$  is ergodic if and only it is adapted.
\end{theorem}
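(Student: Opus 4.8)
The plan is to prove both implications, with the substantive direction being that ergodicity of $\phi$ forces $H_\phi = \{e\}$. For the easy direction, suppose $\phi$ is adapted, so $H_\phi = \{e\}$. By Proposition~\ref{P:meOpen}, I first want $M_\phi$ to be mean ergodic; but ergodicity of $\phi$ is a statement about $M_\phi^0$, and I would use the decomposition $A(G) = A_0(G) \oplus \C u_0$ for a suitable $u_0$ (or argue directly) to see that ergodicity of $\phi$ implies $M_\phi^0$ is mean ergodic, hence $M_\phi$ is mean ergodic, hence $H_\phi$ is open. Combined with $H_\phi = \{e\}$ this gives $G$ discrete --- but actually for the forward direction I don't even need discreteness; I just need $H_\phi = \{e\}$, which is the hypothesis. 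So the easy direction is genuinely easy: if $\phi$ is adapted, Proposition~\ref{P:meOpen} shows $M_\phi$ is mean ergodic (once we know $H_\phi$ open, which follows because $\phi$ ergodic $\Rightarrow M_\phi^0$ mean ergodic $\Rightarrow M_\phi$ mean ergodic), and then Lemma~\ref{M0} finishes: the mean ergodic projection of $M_\phi^0$ must vanish because any fixed vector $u \in A_0(G)$ of $M_\phi$ would satisfy $u(x) = \phi(x) u(x)$, forcing $u$ supported on $H_\phi = \{e\}$, hence $u = 0$ in $A_0(G)$.

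For the hard direction, assume $\phi$ is ergodic; I must show $H_\phi = \{e\}$. Since $\phi$ ergodic implies $M_\phi^0$ mean ergodic, $M_\phi$ is mean ergodic, so by Proposition~\ref{P:meOpen} $H_\phi$ is open and $\phi_{[n]} \to \Cf_{H_\phi}$ in the compact-open topology. The mean ergodic projection $P$ of $M_\phi$ is then multiplication by $\Cf_{H_\phi}$ on $A(G)$. Restricting to $A_0(G)$: for $u \in A_0(G)$, $M_{\phi_{[n]}}^0 u \to \Cf_{H_\phi} \cdot u$ in $A(G)$-norm (this is the strong operator convergence of the restriction). Ergodicity says this limit is $0$ for every $u \in A_0(G)$. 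So $\Cf_{H_\phi} \cdot u = 0$ for all $u \in A_0(G)$, i.e.\ every $u \in A(G)$ with $u(e) = 0$ vanishes on $H_\phi$. Now suppose toward a contradiction that there is $x \in H_\phi$ with $x \neq e$. Since $H_\phi$ is open and $A(G)$ is regular with point evaluations, I can produce $u \in A(G)$ with $u(e) = 0$ but $u(x) \neq 0$ --- e.g.\ take $v \in A(G)$ with $v(x) = 1$ and $v$ supported in a neighbourhood of $x$ not containing $e$ (using that $\{e\}$ and the compact set $\{x\}$ are separated, by regularity/normality of $A(G)$ as stated in Section~\ref{PFA}), so $v \in A_0(G)$ and $\Cf_{H_\phi} \cdot v$ is nonzero at $x$ --- contradicting the above. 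Hence $H_\phi = \{e\}$, i.e.\ $\phi$ is adapted.

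The main technical point --- and the step I expect to require the most care --- is the passage from ``$\phi$ ergodic'' (a statement purely about $M_\phi^0$ on $A_0(G)$) to ``$\Cf_{H_\phi} \cdot u = 0$ for all $u \in A_0(G)$''. Concretely one must check that mean ergodicity of $M_\phi^0$ entails mean ergodicity of $M_\phi$ on all of $A(G)$, which is where I would use that $A(G) = A_0(G) + A(G)\cap P^1(G)$ and that $\phi \cdot u_0 = \phi$ for a fixed $u_0 \in A(G) \cap P^1(G)$ with $u_0 \equiv 1$ near $e$ (so that $\phi_{[n]} \cdot u_0 = \phi_{[n]}$ and the means of $M_\phi$ on the complementary part are controlled by $\phi_{[n]}(e) = 1$, giving convergence to $\Cf_{H_\phi}$). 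Once $M_\phi$ is known mean ergodic, Proposition~\ref{P:meOpen}(iii) identifies its limit, and the rest is the regularity argument above. The only subtlety is ensuring the complement used is compatible with the ideal structure; this is routine given the properties of $A(G)$ recalled in the Preliminaries, so I would keep that verification brief and devote the emphasis to the clean separation argument producing the witnessing $u \in A_0(G)$.
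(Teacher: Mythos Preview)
You have the two directions swapped in difficulty, and the swap hides the real content of the theorem.

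The direction ``ergodic $\Rightarrow$ adapted'' is the trivial one and needs none of the machinery you invoke: if $x\in H_\phi$ with $x\neq e$, pick $u\in A_0(G)$ with $u(x)\neq 0$; then $(\phi_{[n]}u)(x)=u(x)$ for every $n$, and since $A(G)$-norm convergence dominates pointwise convergence, $\phi_{[n]}u$ cannot tend to $0$. That is all. Your route through ``$M_\phi^0$ mean ergodic $\Rightarrow$ $M_\phi$ mean ergodic $\Rightarrow$ $H_\phi$ open'' is not only unnecessary, it uses an implication that is false in general (see below).

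The direction ``adapted $\Rightarrow$ ergodic'' is the substantive one, and here your argument is circular and unrepairable along the lines you sketch. You write: ``if $\phi$ is adapted, Proposition~\ref{P:meOpen} shows $M_\phi$ is mean ergodic (once we know $H_\phi$ open, which follows because $\phi$ ergodic $\Rightarrow \dots$)'' --- but ergodicity of $\phi$ is precisely what you are trying to prove, and when $G$ is \emph{not discrete}, the set $H_\phi=\{e\}$ is \emph{not} open, so Proposition~\ref{P:meOpen} tells you that $M_\phi$ is \emph{not} mean ergodic. In particular, the implication ``$M_\phi^0$ mean ergodic $\Rightarrow$ $M_\phi$ mean ergodic'' fails exactly in the interesting case: once the theorem is established, any adapted $\phi$ on a nondiscrete $G$ gives a counterexample. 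Your complement idea does not rescue this either: for $\phi\cdot u_0=\phi$ you would need $u_0\equiv 1$ on the (typically non-compact) set where $\phi\neq 0$, and such a $u_0$ does not lie in $A(G)$ unless $G$ is compact.

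What is actually needed for ``adapted $\Rightarrow$ ergodic'' is a direct obstruction argument in the dual. The paper assumes $\phi$ is adapted but not ergodic; then (via Lemma~\ref{M0}) $M_\phi^0$ is not mean ergodic, and since $\ker(I-M_\phi^0)=\{0\}$ the range $\overline{(I-M_\phi^0)(A_0(G))}$ is proper in $A_0(G)$. One then picks $T\in \VN(G)$, nonzero on $A_0(G)$, annihilating this range, so that $\langle T,u\rangle=\langle T,\phi u\rangle$ for all $u\in A_0(G)$. A support computation forces $\supp(T)\subseteq H_\phi=\{e\}$, whence $T$ is a scalar multiple of $\lambda_2(\delta_e)$ and vanishes on $A_0(G)$, a contradiction. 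This duality-and-support step is the genuine idea, and nothing in your proposal supplies it.
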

\begin{proof}
If $\phi$ is ergodic, then  it must also be adapted, else there would exist $e\neq x\in H_\phi$ and $u\in A_0(G)$ with $u(x)\neq 0$, so $|\phi_{[n]}(x) u (x)|=|u(x)|\neq 0$, for all $n\in\N$, contradicting ergodicity.

 For the converse, assume that $\phi$ is adapted and suppose that $\phi$ is not ergodic. Then $M_\phi^0$ cannot be  mean ergodic either, by the preceding  Lemma, and the ergodic decomposition  is not satisfied. Since $\ker (I-M_\phi ^0) =\{0\}$,  $\phi$ being adapted, this means that $A_0(G)\neq \overline{\{(I-M_\phi ^0)(u)\colon u\in A_0(G)\}}$. There exists then $T\in \VN(G)$ with $\restr{T}{A_0(G)}\neq 0$ such that 
 $\langle T,u\rangle=\langle T, \phi u\rangle$, for every $u\in A_0(G)$.

    Let $x\in \supp (T)$. Then  $\lambda_2(\delta_x)=w^*-\lim_\alpha  T\cdot u_\alpha$, for some net $u_\alpha\in A(G)$. If $x\neq e$, We can take $u\in A_0(G)$ with $u(x)\neq 0$ and
 \begin{align*}
        u(x)= \lim_\alpha \langle  T\cdot u_\alpha, u\rangle&= \lim_\alpha \<{T,u_\alpha u}\\&=
        \lim_\alpha \<{T,u_\alpha\phi  u}\\&=
         \lim_\alpha \langle T\cdot u_\alpha, \phi u\rangle = \phi (x) u(x).
 \end{align*}
    Therefore, $\phi(x)=1$, but $\phi$ is adapted, so $x=e$.

      So, $\supp(T)=\{e\}$ and \cite[Corollary 2.5.9]{kalau18} proves that $T$ is a multiple of $\lambda_2(\delta_e)$ and hence that $\restr{T}{A_0(G)}\equiv 0$, a contradiction.
      \end{proof}

The picture on ergodicity of $\phi\in \mathcal{P}^1(G)$ is completed by the solution to the complete mixing problem that follows from Theorem 2.1 of  \cite{kanilauulger10}. This problem is still open for convolution operators, see \cite[Remark 2.10]{gajoro24}.

\begin{theorem}[Kaniuth, Lau and \"Ulger]
    Let $\phi\in P^1(G)$, then $\phi$ is strictly aperiodic if, and only if, it is completely mixing.
\end{theorem}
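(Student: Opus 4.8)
I would prove the two implications separately and by quite different means.

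\smallskip

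\emph{Complete mixing $\Rightarrow$ strict aperiodicity:} this I would settle directly, by contraposition. If $E_\phi\neq\{e\}$, pick $x\neq e$ with $|\phi(x)|=1$. Since $A(G)$ is a regular Banach function algebra on $G$ and $\{e\}$ is closed, there is $u\in A(G)$ vanishing on a neighbourhood of $e$ with $u(x)\neq 0$, so $u\in A_0(G)$; then, by \eqref{comp.norms},
\[
  \norm{M_{\phi^n}^0u}\ \geq\ \norm{M_{\phi^n}^0u}_\infty\ \geq\ |\phi(x)|^{n}\,|u(x)|\ =\ |u(x)|\ >\ 0
\]
for every $n$, so $(M_{\phi^n}^0u)_n$ does not converge to $0$ and $\phi$ is not completely mixing.

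\smallskip

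\emph{Strict aperiodicity $\Rightarrow$ complete mixing:} for the substantial converse I would view the augmentation ideal $A_0(G)$ as a commutative semisimple Banach algebra in its own right. Its Gelfand spectrum is $G\setminus\{e\}$ (the hull of $A_0(G)$ inside $A(G)$ being just $\{e\}$), the operator $M_\phi^0$ is a multiplier of $A_0(G)$ with Gelfand transform the restriction of $\phi$ to $G\setminus\{e\}$, and $M_\phi^0$ is power bounded since $\norm{(M_\phi^0)^n}\leq\norm{M_\phi}^n\leq 1$. Then I would invoke the spectral characterisation of complete mixing for power-bounded multipliers of such algebras due to Kaniuth, Lau and \"Ulger, \cite[Theorem 2.1]{kanilauulger10}: a power-bounded multiplier $T$ satisfies $T^na\to0$ for every $a$ precisely when its Gelfand transform has modulus strictly less than $1$ at every point of the spectrum. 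For $T=M_\phi^0$ this condition reads $|\phi(x)|<1$ for all $x\in G\setminus\{e\}$, that is, $E_\phi=\{e\}$; hence strict aperiodicity gives complete mixing.

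\smallskip

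The whole weight of the theorem rests on this converse. One is tempted to mimic the proof of Theorem \ref{T:EiffA}: strict aperiodicity forces $H_\phi=\{e\}$, hence $\phi$ is ergodic by that theorem, so $M_\phi^0$ is mean ergodic with Ces\`aro limit $0$ and $\ker(I-M_\phi^0)=\{0\}$; it would then remain to upgrade convergence of the Ces\`aro means to convergence of the powers $(M_\phi^0)^n$, and one would look --- as the computation with supports of elements of $\VN(G)$ permitted for the means --- to extract from the failure of $(M_\phi^0)^n\to0$ an element of $\VN(G)$ carrying unimodular eigenvalue behaviour. I expect this to be the genuine obstacle: convergence of powers in the strong operator topology is not encoded in an algebraic splitting the way mean ergodicity is, and a power-bounded operator can be weakly stable while being neither strongly stable nor having any unimodular eigenvalue (the unilateral shift on $\ell^2$ being the standard example). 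Getting past this forces one to exploit that $\phi$ is positive definite --- equivalently, that $M_\phi^{*}$ is a normal completely positive unital map on $\VN(G)$ --- which is exactly the structure packaged in the multiplier-algebra theorem of \cite{kanilauulger10}, and that is why I would quote it rather than reconstruct it. (For convolution operators, where no such multiplier technology is available, the corresponding problem remains open; see \cite[Remark 2.10]{gajoro24}.)
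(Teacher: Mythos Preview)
Your proposal is correct and matches the paper's approach: the paper does not supply its own proof but simply records that the result ``follows from Theorem~2.1 of \cite{kanilauulger10}'', which is exactly the reference you invoke for the substantive direction; your explicit passage to the ideal $A_0(G)$ with spectrum $G\setminus\{e\}$ is the natural way to make that citation land, and your direct contrapositive for the easy direction is unobjectionable.
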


    %
    %

\begin{remark}The results of this section show that the mean ergodic properties of the operators $M_\phi$ and $M_{\phi}^0$ are quite different. We see in the next subsection   that the situation changes drastically when we study uniformly ergodic behaviour.
\end{remark}

\subsection{Uniform ergodicity}
After the work already done in this and previous sections, uniform ergodicity can be characterized without effort.

  We remark that all conditions in the statement of the following Theorem  imply that $G$ is discrete,   for,  when $\phi$ is adapted  and  1 is isolated in $\overline{\phi(G)}$,   $H_\phi$ must reduce to  $\{e\}$ and be open.

\begin{theorem}
\label{ue}
    Let $G$ be a  discrete group and let $\phi\in P^1(G)$. The following assertions are equivalent:
    \begin{enumerate}
    \item $\phi$ is uniformly ergodic.
     \item $\phi$ is adapted  and $(M_\phi)_{[n]}$ is uniformly mean ergodic.
        \item $\phi$ is adapted and spread-out.
        \item $\phi$ is adapted  and $M_\phi$ is quasi-compact.
        \item $\phi$ is adapted  and $1$ is isolated in $\sigma(M_\phi)$.
        \item $\phi$ is adapted  and $1$ is isolated in $\sigma(M_\phi ^0)$.
    \end{enumerate}
\end{theorem}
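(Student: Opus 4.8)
The strategy is to establish the chain of equivalences by combining the uniform-ergodicity of $M_\phi^0$ with the results already proved for $M_\phi$ on the full algebra $A(G)$, using Lemma~\ref{M0} and Theorem~\ref{thm:specm0} as the bridge between the two operators. Since $G$ is assumed discrete, $A_0(G)$ is complemented in $A(G)$ by the one-dimensional subspace spanned by $\Cf_{\{e\}}=\mathbf{1}_{\{e\}}\in A(G)$, and $M_\phi$ acts as the identity on that complement (because $\phi(e)=1$). Thus $M_\phi$ decomposes as $M_\phi^0\oplus I$, and correspondingly $\sigma(M_\phi)=\sigma(M_\phi^0)\cup\{1\}$ and $(M_\phi)_{[n]}=(M_\phi^0)_{[n]}\oplus I$. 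This makes the passage between assertions phrased for $M_\phi$ and assertions phrased for $M_\phi^0$ essentially bookkeeping.

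First I would record that, when $\phi$ is adapted and $G$ discrete, $H_\phi=\{e\}$ is open, so Proposition~\ref{P:meOpen} gives mean ergodicity of $M_\phi$; then Lemma~\ref{M0} shows that uniform mean ergodicity of $M_\phi^0$ forces $\phi$ to be uniformly ergodic, and conversely uniform ergodicity of $\phi$ means precisely $\|(M_\phi^0)_{[n]}\|\to 0$, i.e.\ $M_\phi^0$ is uniformly mean ergodic (with zero limit). Using the direct-sum decomposition above, $(M_\phi)_{[n]}\to \mathbf{1}_{\{e\}}\otimes(\cdot)$ in norm iff $(M_\phi^0)_{[n]}\to 0$ in norm; this gives $(1)\Leftrightarrow(2)$. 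Next, $(2)\Leftrightarrow(3)$ is Theorem~\ref{T:umeCaractHso} applied with $H_\phi=\{e\}$, noting that $A(G/H_\phi)=A(G)$ so that ``$\widetilde\phi$ spread-out'' becomes ``$\phi$ spread-out'' (this is exactly Corollary~\ref{adapted}, conditions (i)--(ii), once we know $G$ is discrete; and Lemma~\ref{L:QCimpDisc} plus Theorem~\ref{qc} are available if one wants discreteness to come out of the hypotheses rather than be assumed). For $(3)\Leftrightarrow(4)$ I would invoke Theorem~\ref{qc}: with $G$ discrete and $H_\phi=\{e\}$ finite, $M_\phi$ is quasi-compact iff $\widetilde\phi=\phi$ is spread-out. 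For $(2)\Leftrightarrow(5)$ I would use Theorem~\ref{P:UMEequiv1isol}(i): uniform mean ergodicity of $M_\phi$ is equivalent to $1$ isolated in $\sigma(M_\phi)$, and uniform mean ergodicity of $M_\phi$ is equivalent to uniform mean ergodicity of $M_\phi^0$ by the splitting (or one may cite Corollary~\ref{adapted}). Finally $(5)\Leftrightarrow(6)$ is precisely Theorem~\ref{thm:specm0}(2).

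Since several of these implications are already packaged in Corollary~\ref{adapted}, the cleanest write-up is probably: prove $(1)\Leftrightarrow(2)$ directly via Lemma~\ref{M0} and the splitting $M_\phi=M_\phi^0\oplus I$; then observe that $(2)$, $(3)$, $(4)$, $(5)$ are mutually equivalent by Corollary~\ref{adapted} together with Theorem~\ref{qc} (for the quasi-compactness clause); and close the loop with $(5)\Leftrightarrow(6)$ from Theorem~\ref{thm:specm0}(2).

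\textbf{Main obstacle.} None of the steps is deep given the machinery already developed; the only point requiring a little care is the decomposition $M_\phi = M_\phi^0 \oplus I$ and the attendant spectral/Ces\`aro identities, which rely on $G$ being discrete so that $\mathbf{1}_{\{e\}}$ genuinely lies in $A(G)$ and $A(G)=A_0(G)\oplus \C\,\mathbf{1}_{\{e\}}$ as Banach spaces. One should also double-check the direction in $(1)\Rightarrow$``$G$ discrete'': this is the content of the Corollary following Theorem~\ref{normm0}, and it is what justifies restricting the statement to discrete $G$ (as the remark preceding the theorem already notes), so within the theorem as stated there is nothing further to verify.
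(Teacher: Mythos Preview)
Your proof is correct and follows essentially the same route as the paper: adaptedness plus the bridge $(1)\Leftrightarrow(2)$, then Corollary~\ref{adapted} and Theorem~\ref{qc} for the equivalence of $(2)$--$(5)$, and Theorem~\ref{thm:specm0}(2) for $(5)\Leftrightarrow(6)$. The one substantive difference is in the step $(1)\Rightarrow(2)$: the paper simply cites \cite[Proposition~4.4]{gajoro24} (uniform mean ergodicity of the restriction to a hyperplane implies uniform mean ergodicity of the full operator), whereas you make this self-contained via the explicit Banach-space splitting $A(G)=A_0(G)\oplus\C\,\Cf_{\{e\}}$ and $M_\phi=M_\phi^0\oplus I$, available because $G$ is discrete. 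Your version is slightly more informative, since it also yields $\sigma(M_\phi)=\sigma(M_\phi^0)\cup\{1\}$ directly, making the appeal to Theorem~\ref{thm:specm0} for $(5)\Leftrightarrow(6)$ almost redundant in the discrete case.

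One small omission: in arguing $(1)\Rightarrow(2)$ you establish uniform mean ergodicity of $M_\phi$ but never explicitly extract ``$\phi$ is adapted'' from $(1)$. This is immediate (uniformly ergodic $\Rightarrow$ ergodic $\Rightarrow$ adapted by Theorem~\ref{T:EiffA}, or directly: if $\phi(x)=1$ for some $x\neq e$, pick $u\in A_0(G)$ with $u(x)\neq 0$ and note $|\phi_{[n]}(x)u(x)|=|u(x)|$ does not tend to $0$), but it should be stated, since it is the reason adaptedness appears in items $(2)$--$(6)$.
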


\begin{proof}
By Theorem \ref{T:EiffA}, adaptedness is a necessary condition for uniform ergodicity, its appearance in items (2)--(6) needs not further mention.

  Assertion (1) implies (2) because $M_\phi^0$ is just the restriction of $M_\phi$ to the hyperplane $A_0(G)$ and uniform mean ergodicity  of such a restriction implies uniform mean ergodicity  of the operator, see \cite[Proposition 4.4]{gajoro24}.  The converse follows from Lemma \ref{M0}.

Corollary \ref{adapted} and Theorem \ref{qc} prove that assertions (2), (3), (4) and (5) are equivalent.

Finally, Theorem \ref{thm:specm0} proves that assertion (5) and  (6) are equivalent.


\end{proof}

The same approach shows that the  uniform completely mixing problem can be solved combining  Corollary~\ref{adapted2} With Theorem~\ref{ue}.



\begin{theorem}
\label{ucm}
   Let $G$ be a locally compact group and $\phi\in P^1(G)$.  Then $\phi$ is uniformly completely mixing if and only if  it is strictly aperiodic and uniformly ergodic.
\end{theorem}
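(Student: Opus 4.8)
The plan is to reduce the statement about uniform complete mixing to the already-established characterizations of uniform mean ergodicity (Theorem~\ref{ue}), of strict aperiodicity versus complete mixing, and of norm convergence of iterates (Corollary~\ref{adapted2}). First I would observe, as in the remark preceding Theorem~\ref{ue}, that each side of the claimed equivalence forces $G$ to be discrete: if $\phi$ is uniformly completely mixing then $\norm{M_{\phi^n}^0}\to 0$, in particular $\norm{M_{\phi^n}^0}<1$ for large $n$, so by Theorem~\ref{normm0} (which gives $\norm{M_\psi^0}=1$ for every $\psi\in P^1(G)$ when $G$ is nondiscrete, applied to $\psi=\phi^n$) the group must be discrete; on the other hand, uniform ergodicity already forces discreteness. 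So we may assume $G$ discrete throughout, and in particular $A_0(G)$ is a closed hyperplane in $A(G)=\VN(G)^\ast$ on which $M_\phi$ restricts to $M_\phi^0$.

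Next I would argue the forward implication. Suppose $\phi$ is uniformly completely mixing, i.e. $\norm{M_{\phi^n}^0}\to 0$. Then certainly $\norm{(M_{\phi^0})_{[n]}}\to 0$, so $\phi$ is uniformly ergodic; by Theorem~\ref{T:EiffA} this gives adaptedness and by Theorem~\ref{ue} all the equivalent conditions there hold. It remains to see strict aperiodicity. Since $\overline{\phi(G)}\subseteq\sigma(M_\phi)$ and $M_{\phi^n}^0\to 0$ in norm, the iterates $M_\phi^n$ converge in norm to the rank-one (finite-rank) projection onto $\C\cf_{\{e\}}$ — indeed $M_\phi^n = M_{\phi^n}^0\oplus(\text{action on the }\cf_{\{e\}}\text{ direction})$ after the appropriate splitting $A(G)=A_0(G)\oplus\C\cf_{\{e\}}$, which is available since $G$ is discrete. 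Norm convergence of $(M_\phi^n)_n$ then forces, by Theorem~\ref{P:UMEequiv1isol}(ii), that $\sigma(M_\phi)\cap\T=\{1\}$; combined with $\overline{\phi(G)}\subseteq\sigma(M_\phi)$ this yields $\phi(x)\in\T\implies\phi(x)=1$, i.e. $E_\phi=H_\phi=\{e\}$, which is strict aperiodicity.

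For the converse, assume $\phi$ is strictly aperiodic and uniformly ergodic. Uniform ergodicity gives (via Theorem~\ref{ue}) that $\phi$ is adapted and spread-out, with $G$ discrete. Then by Corollary~\ref{adapted2}, conditions (iii)$\Rightarrow$(i) there — using that $\phi$ is now both spread-out and strictly aperiodic — give that $(M_\phi^n)_n$ is norm convergent. Its strong-operator limit is, by Proposition~\ref{iteratessot} (since $H_\phi=E_\phi=\{e\}$ is open), the projection $M_{\cf_{\{e\}}}$ onto $\C\cf_{\{e\}}$; hence the norm limit is $M_{\cf_{\{e\}}}$ as well. Restricting to the invariant hyperplane $A_0(G)$, on which $\cf_{\{e\}}$ acts as $0$, we get $\norm{M_{\phi^n}^0}=\norm{M_{\phi^n}-M_{\cf_{\{e\}}}}\big|_{A_0(G)}\to 0$, so $\phi$ is uniformly completely mixing.

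The step I expect to be the only mildly delicate one is the bookkeeping around the decomposition $A(G)=A_0(G)\oplus\C\cf_{\{e\}}$ for discrete $G$ and the identification of the norm limit of $(M_\phi^n)_n$ with $M_{\cf_{\{e\}}}$: one needs that $\cf_{\{e\}}\in A(G)$ (true since $G$ is discrete), that $M_{\cf_{\{e\}}}$ is the relevant projection, and that norm convergence of $M_\phi^n$ on all of $A(G)$ is equivalent to norm convergence of $M_{\phi^n}^0$ on the complemented hyperplane $A_0(G)$ — this last point is exactly the content of \cite[Proposition 4.4]{gajoro24} already invoked in the proof of Theorem~\ref{ue}. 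Everything else is a direct assembly of the cited results, so the proof is short; the bulk of the work has been front-loaded into Theorems~\ref{ue} and~\ref{P:UMEequiv1isol} and Corollary~\ref{adapted2}.
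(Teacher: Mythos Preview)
Your proposal is correct and follows essentially the same approach the paper indicates: the paper simply says the result follows by combining Corollary~\ref{adapted2} with Theorem~\ref{ue}, and your argument is precisely a fleshed-out version of that combination, using the splitting $A(G)=A_0(G)\oplus\C\cf_{\{e\}}$ (available once $G$ is discrete) to pass between norm convergence of $M_{\phi}^n$ and of $M_{\phi}^{0,n}$. The auxiliary appeals to Theorem~\ref{P:UMEequiv1isol}(ii) and Proposition~\ref{iteratessot} are already built into Corollary~\ref{adapted2}, so there is no substantive deviation.
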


%
%

We finish giving the results that we get if we proceed similarly but using  Corollary \ref{C:umeCaractHso} instead of Theorem \ref{T:umeCaractHso}. When $G$ is amenable, this is nothing but putting together Theorem \ref{ue} and Theorem \ref{ucm} above. Example \ref{ex} shows that when $G$ is not amenable, the situation differs.
\begin{theorem}
Let $G$ be an amenable discrete locally compact group and let $\phi\in P_1(G)$ be adapted. Consider the following conditions:
\begin{itemize}
\item[(i)] $\lim_n\norm{\phi_{[n]}-\Cf_e}=0$.
\item[(ii)] There is $k\in\N$ and  $u\in A(G)$ such that $\norm{\phi^k-u}<1$.
\item[(iii)] 1 is isolated in $\sigma(\phi)$.
\item[(a)] $\lim_n\norm{\phi^n-\Cf_e}=0$.
\item[(b)] 1 is isolated in $\sigma(\phi)$ and $\phi$ is strictly aperiodic.

\end{itemize}
Conditions (i), (ii) and (iii) are equivalent. Conditions (a) and (b) are equivalent.
\end{theorem}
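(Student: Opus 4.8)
The plan is to deduce this theorem from results already established in the excerpt, treating the amenable hypothesis as the device that lets us pass freely between the $B(G)$-norm and the multiplier norm, and between $A(G/H_\phi)$ and $A(G)$. Since $\phi$ is adapted we have $H_\phi=\{e\}$, so $A(G/H_\phi)=A(G)$ and $\cf_{H_\phi}=\cf_e$, and the ``spread-out'' condition for $\tilde\phi$ becomes exactly condition (ii): there are $k\in\N$ and $u\in A(G)$ with $\norm{\phi^k-u}<1$. (When $G$ is amenable the multiplier norm and the $B(G)$-norm coincide, so there is no distinction between $\phi$ and $\tilde\phi$ being spread-out.)

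For the equivalence of (i), (ii), (iii): first I would observe that (i) says precisely that $(\phi_{[n]})$ converges to $\cf_{H_\phi}$ in the $B(G)$-norm, which is covered by Theorem~\ref{P:UMEequiv1isol}(iii): this happens if and only if $1$ is isolated in $\sigma(\phi)$, giving (i)$\Leftrightarrow$(iii) immediately. Next, Corollary~\ref{C:umeCaractHso} states that $(\phi_{[n]})$ converges to $\cf_{H_\phi}$ if and only if $H_\phi$ is open and for some $k\in\N$ there is $u\in A(G/H_\phi)$ with $\norm{\phi^k-u}<1$; here $H_\phi=\{e\}$ is automatically open in the discrete group $G$ and $A(G/H_\phi)=A(G)$, so this reduces exactly to (ii). Hence (i)$\Leftrightarrow$(ii), and the three conditions are equivalent.

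For the equivalence of (a) and (b): condition (a) says $(\phi^n)$ converges to $\cf_e=\cf_{H_\phi}$ in the $B(G)$-norm, which is Theorem~\ref{P:UMEequiv1isol}(iv): this holds if and only if $\sigma(M_\phi)\cap\T=\{1\}$ and $1$ is isolated in $\sigma(\phi)$. To match this with (b) I need $\sigma(M_\phi)\cap\T=\{1\}$ to be equivalent, under adaptedness and $1$ isolated in $\sigma(\phi)$, to strict aperiodicity of $\phi$; this is exactly the argument already carried out inside the proof of Corollary~\ref{adapted2} (the step deducing strict aperiodicity from $\overline{\phi(G)}\subseteq\sigma(M_\phi)$ in one direction, and the character/point-evaluation argument using $\norm{\phi_s}<1$ in the other, which is where amenability is again used to identify the $B(G)$-decomposition with the spread-out decomposition). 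So (a)$\Leftrightarrow$(b) follows by invoking Theorem~\ref{P:UMEequiv1isol}(iv) together with that portion of the proof of Corollary~\ref{adapted2}, now phrased in terms of $\sigma(\phi)$ rather than $\sigma(M_\phi)$ — legitimate since the two coincide when $G$ is amenable.

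The only mild obstacle is bookkeeping: making sure that each citation to an earlier result is invoked in the $B(G)$-norm form (parts (iii) and (iv) of Theorem~\ref{P:UMEequiv1isol}, and Corollary~\ref{C:umeCaractHso}) rather than the multiplier-norm form, and explicitly noting the two places amenability enters — the coincidence $\norm{\cdot}_{MA(G)}=\norm{\cdot}_{B(G)}$ and the resulting equality $\sigma(M_\phi)=\sigma(\phi)$ and the validity of the Lebesgue-type decomposition $\phi^k=\phi_a+\phi_s$ with $\phi_a\in A(G)$, $\norm{\phi_s}<1$. No new ideas are needed beyond specializing the already-proved statements to $H_\phi=\{e\}$; the example reference in the preceding sentence of the paper (Example~\ref{ex}) is what shows these equivalences genuinely fail without amenability, so no claim about the nonamenable case is attempted here.
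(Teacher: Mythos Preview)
Your proof of the equivalence (i)$\Leftrightarrow$(ii)$\Leftrightarrow$(iii) is correct and matches the paper's argument exactly: both invoke Corollary~\ref{C:umeCaractHso} for (i)$\Leftrightarrow$(ii) and Theorem~\ref{P:UMEequiv1isol}(iii) for (i)$\Leftrightarrow$(iii).

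For (a)$\Leftrightarrow$(b) your route is correct but differs from the paper's. You cite Theorem~\ref{P:UMEequiv1isol}(iv) to reduce (a) to a spectral condition, and then borrow the character argument from the proof of Corollary~\ref{adapted2} to identify that condition with strict aperiodicity (using amenability so that $\sigma(M_\phi)=\sigma(\phi)$). The paper instead argues (b)$\Rightarrow$(a) directly: from (b) one gets (ii), hence $\norm{\widetilde M_\phi^k-\widetilde M_u}<1$ with $u\in A(G)$; since $G$ is discrete, $\widetilde M_u$ is a norm limit of finite-rank multipliers, so $\widetilde M_\phi$ is quasi-compact on $B(G)$; strict aperiodicity forces $\sigma_p(\widetilde M_\phi)\cap\T\subseteq\{1\}$, and Yosida--Kakutani (Theorem~\ref{yosidakaku}) yields norm convergence of $\phi^n=\widetilde M_\phi^n(1)$. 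Your approach is shorter because it recycles the spectral machinery already packaged in Theorem~\ref{P:UMEequiv1isol}(iv) and Corollary~\ref{adapted2}; the paper's approach is more self-contained and makes the role of quasi-compactness explicit, which also explains why amenability (and not just discreteness) is used here---it is what lets one read $\norm{\phi^k-u}<1$ as an operator-norm inequality on $B(G)$.
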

\begin{proof}
The proof of the equivalence between (i) and (ii) is completely analogous to the proof of Theorem \ref{ucm}, using Corollary \ref{C:umeCaractHso}. Using Proposition \ref{P:UMEequiv1isol} we get that both are equivalent to (iii). It is trivial that (a) implies (b). If we assume (b), then (iii) holds and then also (ii), which yields $\|\widetilde M^k _\phi - \widetilde M_u\|<1$. Note that, since $u\in A(G)$, $\widetilde M_u$ is the limit of finite range operators, which implies that $\widetilde M_\phi$ is quasicompact. We get (b) by Theorem \ref{yosidakaku}, since $\sigma_p(\widetilde{M}_\phi)=\overline{\phi(G)}$.
\end{proof}


\subsection*{Acknowledgements}
    The second author was partially supported by the project PID2020-119457GB-100 funded by MCIN/AEI/10.13039/501100011033 and “ERDF A way of making Europe” and by CIAICO/2023/242 funded by GVA.

    The third author is supported by Ayudas Margarita Salas 2021-2023 of Universitat Politecnica de Valencia funded by the Spanish Ministry of Universities (Plan de Recuperacion, Transformacion y Resiliencia) and European Union-Next generation EU (RD 289/2021 and UNI/551/2021) .

\def\cprime{$'$} \def\cprime{$'$} \def\cprime{$'$}
  \def\polhk#1{\setbox0=\hbox{#1}{\ooalign{\hidewidth
  \lower1.5ex\hbox{`}\hidewidth\crcr\unhbox0}}}
  \def\polhk#1{\setbox0=\hbox{#1}{\ooalign{\hidewidth
  \lower1.5ex\hbox{`}\hidewidth\crcr\unhbox0}}} \def\cprime{$'$}


\begin{thebibliography}{10}
\bibitem{aren51}
R.  Arens, \emph{The adjoint of a bilinear operation}, Proc. Amer. Math.
  Soc. \textbf{2} (1951), 839--848.

\bibitem{arsac76} G. Arsac , \emph{Sur l'espace de Banach engendr\'e par les coefficients d'une repr\'esentation uni-
taire,} Publ. D\'ep. Math. (Lyon) 13 (1976), no. 2, 1--101.

    \bibitem{bofrejo07}
    J. Bonet, L. Frerick, E. Jord\'a, \emph{Extension of vector-valued holomorphic and harmonic function}s, \textit{Studia Math.} \textbf{183}(2007), 225--248.

    \bibitem{branforrzwar17}
  M. Brannan \and    B.Forrest \and C. Zwarich, \emph{ Leinert sets and complemented ideals in Fourier algebras},
Studia Math.\textbf{239} (2017), 273--296.

\bibitem{bjr18}  J. Bonet, E. Jordá, A. Rodr\'iguez, \emph{Mean ergodic multiplication operators on weighted spaces of continuous functions}, \textit{Mediterr. J. Math.} Paper nº 108, 11 pp (2018).

 \bibitem{civinyood61} P.~Civin \and B.~Yood, \emph{ The second conjugate space of a Banach algebra as an algebra,} Pacific J. Math.  {\bf11} (1961),   847--870.

    \bibitem{chou82}
    C. Chou, \emph{Topological invariant means on the von Neumann algebra $VN(G)$}. Trans. Amer. Math. Soc. \textbf{273}(1982), no.1, 207-–229.

    \bibitem{conway}
    J.~B. Conway,
    \newblock {\em A course in functional analysis}, volume~96 of {\em Graduate
      Texts in Mathematics}.
    \newblock Springer-Verlag, New York, second edition, 1990.


\bibitem{michefiga80}\newblock
L. De-Michele,  and A.  Fig\`a-Talamanca, \newblock
\emph{Positive definite functions on free groups}\newblock
Amer. J. Math.
\textbf{102} (1980),  503--509.

    \bibitem{dunf43}
    N. Dunford,
    \newblock Spectral theory. {I}. {C}onvergence to projections.
    \newblock {\em Trans. Amer. Math. Soc.}, 54:185--217, 1943.


\bibitem{eyma64}
P.~Eymard, \emph{L'alg\`ebre de {F}ourier d'un groupe localement compact},
  Bull. Soc. Math. France \textbf{92} (1964), 181--236.


    \bibitem{filagali22}
    M. Filali, J. Galindo,
    \newblock \emph{Orthogonal $\ell_1$-sets and extreme non-Arens regularity of preduals of von Neumann algebras.}
    \newblock  \textit{J. Math. Anal. Appl.} \textbf{512}(2022), no.1, Paper No. 126137, 21 pp.




    \bibitem{forr98}
    B. Forrest, \emph{Fourier analysis on coset spaces.}
   \textit{ Rocky Mountain J. Math.} \textbf{28} (1998), no.1, 173–190.

 \bibitem{frischetal19}
 J. Frisch, Y.  Hartman,  O. Tamuz and P. V. ~Ferdowsi,
 \newblock \emph{Choquet-{D}eny groups and the infinite conjugacy class property.}
 \newblock {Ann. of Math. (2)}, 190(1):307--320, 2019.

   \bibitem{galijorda21}
J.~{G}alindo and E.~{J}ord\'a,
\newblock \emph{Ergodic properties of convolution operators.}
\newblock { J. Operator Theory}, 86(2):469--501, 2021.


    \bibitem{gajoro24}
    J. Galindo, E. Jord\'a, A. Rodr\'iguez-Arenas,
    \newblock\emph{Uniformly ergodic probability measures,}
    Publ. Mat., \textbf{68} (2024), 593--613.

\bibitem{grahmcge79}
C.~C.~ Graham  and O.~C.~McGehee, Essays in commutative harmonic analysis.
\emph{Grundlehren der Mathematischen Wissenschaften}, 238. Springer-Verlag, New York-Berlin,  1979.

    \bibitem{gro04}
    K. Grosse-Erdmann, \emph{A weak criterion for vector-valued holomorphy}, {Math. Proc. Cambridge Philos. Soc.} \textbf{136}(2004), 399-411.


    \bibitem{guex}
    S.M. Guex, Ergodic theorems for certain Banach algebras associated to locally compact groups. PHD thesis. University of Alberta 2013.
    \bibitem{haag79}
    U. Haagerup, \emph{An example of a nonnuclear C$^\ast$-algebra, which has the metric approximation property}.
 Invent. Math.  \textbf{50}  (1978/79),  no. 3, 279--293.
   \bibitem{kan08}
    E. Kaniuth, A course in commutative Banach algebras.
   \textit{ Grad. Texts in Math.}, \textbf{246}
    Springer, New York, 2009. xii+353 pp.



    \bibitem{kalau18}
    E.~Kaniuth, A.T-M. Lau,
    \newblock Fourier and Fourier-Stieltjes Algebras on Locally Compact Groups.
    \textit{Math. Surveys Monogr.,} 231, American Mathematical Society, Providence, RI, 2018.

\bibitem{kanilauschl03}
E.~Kaniuth, A.~T. Lau, and G.~Schlichting, \emph{Lebesgue type decomposition of
  subspaces of {F}ourier-{S}tieltjes algebras}, Trans. Amer. Math. Soc.
  \textbf{355} (200 3), no.~4, 1467--1490 (electronic).

 \bibitem{kanilauulger10}
  E.~Kaniuth, A.~T. Lau, and A.~ \"{U}lger, \emph{Multipliers of commutative {B}anach algebras, power
              boundedness and {F}ourier-{S}tieltjes algebras}, J. Lond. Math. Soc. (2), \textbf{81} (2010), no.~1m 255--275.

%
%
%







%


    \bibitem{kren85}
    U. Krengel,
    \newblock { Ergodic theorems}, volume~6 of {\em De Gruyter Studies in
      Mathematics}.
    \newblock Walter de Gruyter \& Co., Berlin, 1985.
    \newblock With a supplement by Antoine Brunel.

\bibitem{lars71}
R.~Larsen,
An introduction to the theory of multipliers.
\emph{Die Grundlehren der mathematischen Wissenschaften}, Band 175. Springer-Verlag, New York-Heidelberg,  1971. {\rm xxi}+282 pp.
		

    \bibitem{lau79}
    A. T-M., Lau, \emph{ Uniformly continuous functionals on the Fourier algebra of any locally compact group}, Trans. Amer. Math. Soc.,  \textbf{251} (1979), 39--59.


\bibitem{laulose99}
A.~T.~Lau and V.~Losert, \emph{Ergodic sequences in the {F}ourier-{S}tieltjes algebra and
              measure algebra of a locally compact group}, Trans. Amer. Math. Soc. \textbf{351} (1999), no. 1, 417--428.

 \bibitem{lin74}
    M.~Lin,
    \newblock \emph{On the uniform ergodic theorem.}
    \newblock {Proc. Amer. Math. Soc.}, 43:337--340, 1974.
    \bibitem{mbezem93}
    M. Mbekhta, J. Zem\'anek,
    \newblock\emph{Sur le théorème ergodique uniforme et le spectre.}
   {C. R. Acad. Sci. Paris Sér. I Math.} \textbf{317}(1993), no.12, 1155–-1158.

    \bibitem{miao99}
T. Miao, \emph{Decomposition of {$B(G)$}}, Trans. Amer. Math. Soc.
  \textbf{351} (1999), no.~11, 4675--4692.

  \bibitem{musta}{H. Mustafayev}, \emph{Mean Ergodic Theorems for Multipliers on Banach Algebras}, J. Fourier. Anal. Appl. \textbf{25} (2019) 393--426.

  \bibitem{rena72} {P. F. Renaud}, {\em Invariant Means on a class of Von Neumann algebras}, Trans. Amer. Math. Soc. \textbf{170} (1972), 285--291.

\bibitem{rose81}
J. Rosenblatt,
\newblock \emph{Ergodic and mixing random walks on locally compact groups.}
\newblock { Math. Ann.}, 257(1):31--42, 1981.

\bibitem{rudin90}
W.  Rudin, \newblock
 Fourier analysis on groups, Reprint of the 1962 original,
\newblock of \emph{John Wiley and Sons}, 1990.

\bibitem{take02}
M.  Takesaki, Theory of operator algebras. {I}, \emph{Encyclopaedia of
  Mathematical Sciences}, vol. 124, Springer-Verlag, Berlin, 2002, Reprint of
  the first (1979) edition, Operator Algebras and Non-commutative Geometry, 5.

    \bibitem{yosikaku41}
    K.~Yosida and S.~Kakutani,
    \newblock \emph{Operator-theoretical treatment of {M}arkoff's process and mean
      ergodic theorem. }
    \newblock {Annals of Mathematics}, 42(1):188--228, 1941.
\end{thebibliography}
\end{document}